\newcommand\AGL{\mathrm{AGL}}\newcommand\Aut{\mathrm{Aut}}
\newcommand\Cay{\mathrm{Cay}}
\newcommand\FF{\mathbb{F}}\newcommand\Fun{\mathrm{Fun}}
\newcommand\GaL{\mathrm{\Gamma L}}
\newcommand\GL{\mathrm{GL}}
\newcommand\la{\langle}
\newcommand\NN{\mathsf{N}}
\newcommand\Out{\mathrm{Out}}
\newcommand\PGaL{\mathrm{P\Gamma L}}
\newcommand\PGL{\mathrm{PGL}}
\newcommand\PSiL{\mathrm{P\Sigma L}}\newcommand\PSL{\mathrm{PSL}}
\newcommand\ra{\rangle}
\newcommand\SL{\mathrm{SL}}
\newcommand\Soc{\mathrm{Soc}}
\newcommand\Sym{\mathrm{Sym}}
\newcommand\Syl{\mathrm{Syl}}
\newcommand\Z{\mathbb{Z}}\newcommand\ZZ{\mathsf{Z}}
\newtheorem{theorem}{Theorem}[section]
\newtheorem{corollary}[theorem]{Corollary}
\newtheorem{lemma}[theorem]{Lemma}
\newtheorem{hypothesis}[theorem]{Hypothesis}
\theoremstyle{definition}
\newtheorem{example}[theorem]{Example}
\newtheorem{question}[theorem]{Question}
\newtheorem{notation}[theorem]{Notation}
\newtheorem*{remark}{Remark}
\begin{document}
\title[Which maximal subgroups are perfect codes?]{Which maximal subgroups are perfect codes?}

\author[S.Qiao]{Shouhong Qiao}
\address[Shouhong Qiao]{School of Mathematics and Statistics, Guangdong University  of Technology,  Guangzhou 510520, P. R. China}
\email{qshqsh513@163.com}

\author[N.Su]{Ning Su}
\address[Ning Su]{School of Mathematics, Sun Yat-sen University, Guangzhou 510275, P. R. China}
\email{suning3@mail.sysu.edu.cn}

\author[B.Xia]{Binzhou Xia}
\address[Binzhou Xia]{School of Mathematics and Statistics, The University of Melbourne, Parkville, VIC 3010, Australia}
\email{binzhoux@unimelb.edu.au}

\author[Z.Zhang]{Zhishuo Zhang}
\address[Zhishuo Zhang]{School of Mathematics and Statistics, The University of Melbourne, Parkville, VIC 3010, Australia}
\email{zhishuoz@student.unimelb.edu.au}

\author[S.Zhou]{Sanming Zhou}
\address[Sanming Zhou]{School of Mathematics and Statistics, The University of Melbourne, Parkville, VIC 3010, Australia}
\email{sanming@unimelb.edu.au}

\begin{abstract}
A perfect code in a graph $\Gamma=(V, E)$ is a subset $C$ of $V$ such that no two vertices in $C$ are adjacent and every vertex in $V{\setminus}C$ is adjacent to exactly one vertex in $C$. A subgroup $H$ of a group $G$ is called a subgroup perfect code of $G$ if it is a perfect code in some Cayley graph of $G$. In this paper, we undertake a systematic study of which maximal subgroups of a group can be perfect codes. Our approach highlights a characterization of subgroup perfect codes in terms of their ``local'' complements.
\vskip4pt
\noindent {\sc Keywords}: perfect codes, maximal subgroups, O'Nan-Scott Theorem, almost simple groups
\vskip4pt
\noindent {\sc MSC2020}: 20D15, 20D45, 05C25
\end{abstract}

\maketitle

\section{Introduction}

Given a simple and undirected graph $\Gamma$ with vertex set $V$, a subset $C\subseteq V$ is called a \emph{perfect code} in $\Gamma$ if $C$ is an independent set and every vertex in $V\setminus C$ is adjacent to exactly one vertex in $C$. A perfect code is also known as an \emph{efficient dominating set}~\cite{DS2003} or \emph{independent perfect dominating set}~\cite{L2001}.

The investigation of perfect codes in graphs began with Biggs' extension of classical perfect codes, such as Hamming codes and Lee codes, to distance-transitive graphs~\cite{B1973}. As a matter of fact, a perfect code in the Hamming graph $H(n,q)$ is exactly a $q$-ary $1$-error correcting perfect code of length $n$ under the Hamming distance. As justified in~\cite{HBZ2018}, since $H(n,q)$ is a Cayley graph, perfect codes in Cayley graphs can be considered as a generalization of $1$-error correcting perfect codes in coding theory, and they are also related to tilings of the underlying groups. Because of these, perfect codes in Cayley graphs have attracted significant attention in recent years, leading to numerous results for Cayley graphs over various groups, ranging from cyclic and dihedral groups to more complex families of groups~\cite{D2014,DSLW2017,FHZ2017,HBZ2018,MBG2007,OPR2007,RM2013,Z2019}.

A natural and compelling direction is to consider perfect codes that are themselves subgroups of a given group $G$~\cite{BLZ2025,CLZ2025,CM2013,CWX2020,KAK2023,MWWZ2020,TM2013,WZ2023,XZZ2024,Z2023,ZZ2022}. Notably, Hamming codes, being linear subspaces, constitute one of the most important families of subgroup perfect codes. The additional subgroup structure enriches the theoretical framework and offers practical advantages, particularly in terms of efficient representation and computation.

In this paper, all groups are finite. For a group $G$ with identity element $e$ and an inverse-closed subset $S$ of $G\setminus \{e\}$, the \emph{Cayley graph} $\Cay(G,S)$ is the graph with vertex set $G$ and edge set $\{\{g,sg\}\mid s\in S,\; g\in G\}$. To classify the pair $(H,S)$ such that $H$ is a subgroup perfect code of $\Cay(G,S)$, a natural starting point is to determine which subgroups $H\leq G$ admit such a pair. To this end, Huang, Xia, and Zhou~\cite{HBZ2018} introduced the following concept in 2018: A subset $C$ of a group $G$ is called a \emph{perfect code of $G$} if there exists a Cayley graph of $G$ which admits $C$ as a perfect code. In particular, if $C$ is also a subgroup of $G$, then $C$ is referred to as a \emph{subgroup perfect code of $G$}. They established a purely group-theoretic necessary and sufficient condition for a normal subgroup $A$ of $G$ to be a perfect code, which has since inspired active research on subgroup perfect codes of groups~\cite{CWX2020,KAK2023,MWWZ2020,WZ2023,ZZ2022}. In 2020, Chen, Wang and Xia further extended the characterization for normal subgroups to a more general criterion that determines when an arbitrary subgroup of $G$ is a perfect code as follows.

\begin{theorem}[{\cite[Theorem~1.2]{CWX2020}}]\label{thm:1}
    Let $G$ be a group and $H\leq G$. Then the following statements are equivalent:
    \begin{enumerate}[\rm(a)]
        \item \label{enu:pc1} $H$ is a perfect code of $G$;
        \item \label{enu:pc2} there exists an inverse-closed left transversal of $H$ in $G$;
        \item \label{enu:pc3} for each $a\in G$ such that $a^2\in H$ and $|H|/|H\cap H^a|$ is odd, there exists $b\in aH$ such that $b^2=e$;
        \item \label{enu:pc4} for each $a\in G$ such that $HaH=Ha^{-1}H$ and $|H|/|H\cap H^a|$ is odd, there exists $b\in aH$ such that $b^2=e$.
    \end{enumerate}
\end{theorem}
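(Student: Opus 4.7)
\emph{Plan.} I would prove the equivalence via the cycle $(a)\Leftrightarrow(b)\Rightarrow(c)\Leftrightarrow(d)\Rightarrow(b)$, with $(d)\Rightarrow(c)$ trivial since $a^2\in H$ forces $HaH=Ha^{-1}H$. The dictionary $(a)\Leftrightarrow(b)$ is standard: unwinding the definition of a perfect code in $\Cay(G,S)$ shows that the independence and unique-domination conditions are together equivalent to $S\cup\{e\}$ being an inverse-closed right transversal of $H$ in $G$, which in turn coincides with being an inverse-closed left transversal (inversion interchanges left and right cosets).

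For $(b)\Rightarrow(c)$, fix an inverse-closed left transversal $T$ and $a$ with $a^2\in H$ and $|H|/|H\cap H^a|$ odd. Since $HaH$ is inverse-closed and $T\cap HaH$ meets each of the $|H|/|H\cap H^a|$ left cosets of $H$ in $HaH$ exactly once, $T\cap HaH$ is an inverse-closed set of odd size, so inversion has a fixed point $t_0$ with $t_0^2=e$. Writing $t_0=h_0 a h_1$ with $h_0, h_1\in H$, the conjugate $b:=h_0^{-1}t_0 h_0=a(h_1 h_0)$ lies in $aH$ and is still an involution. For $(c)\Rightarrow(d)$, assuming $HaH=Ha^{-1}H$ and $|H|/|H\cap H^a|$ odd, we have $a^{-1}\in HaH$ and hence $aha\in H$ for some $h\in H$; setting $a':=ah\in aH$ gives $(a')^2=(aha)h\in H$, while $[H:H\cap H^{a'}]=[H:H\cap H^a]$ (conjugation by $h\in H$ preserves the intersection). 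Applying (c) to $a'$ produces the required $b\in aH$ with $b^2=e$.

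The substantive direction is $(d)\Rightarrow(b)$, where I would construct an inverse-closed left transversal $T$ double coset by double coset. The enabling fact is that for any left cosets $L\subseteq HaH$ and $L'\subseteq Ha^{-1}H$, the intersection $L\cap (L')^{-1}$ is nonempty, because $(L')^{-1}$ is a right coset of the double coset $HaH$ and any left and right coset of the same double coset must meet. This reduces most of the construction to routine matching: for a non-self-inverse pair $\{HaH, Ha^{-1}H\}$, pick any bijection $\sigma$ between their left cosets and take $t_L\in L\cap \sigma(L)^{-1}$, whereupon $t_L^{-1}$ automatically represents $\sigma(L)$; for an inverse-closed $HaH$ with $|H|/|H\cap H^a|$ even, do the same using a fixed-point-free involutive pairing of its left cosets. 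The only situation forcing a representative to be its own inverse is an inverse-closed $HaH$ with odd $|H|/|H\cap H^a|$, since any involutive pairing on an odd number of cosets must fix one. This is precisely where hypothesis (d) enters: it provides an involution $b\in aH$ to serve as the representative of that fixed coset, which we take to be $aH$ itself, after which the remaining even family of cosets pairs up as above. The main obstacle is this forced-involution case, which (d) is designed exactly to handle.
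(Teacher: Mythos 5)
The paper does not actually prove this statement: Theorem~\ref{thm:1} is quoted from \cite{CWX2020} (the paper only remarks that it restates that result in left-coset form), so there is no in-paper argument to compare yours against; the comparison can only be with the cited source. Judged on its own, your proposal is a correct, self-contained proof. The cycle (a)$\Leftrightarrow$(b)$\Rightarrow$(c)$\Leftrightarrow$(d)$\Rightarrow$(b) closes properly, and the enabling fact in the hard direction is true: if $uH$ and $Hv$ lie in the same double coset $HaH$, writing $u=h_1ah_2$ and $v=h_3ah_4$ gives $h_1ah_4\in uH\cap Hv$, so your coset-matching scheme (arbitrary bijection for a non-self-inverse pair of double cosets, fixed-point-free involutive pairing for a self-inverse double coset with an even number of cosets, and hypothesis (d) supplying the self-inverse representative exactly for the self-inverse double cosets with an odd number of cosets) does assemble an inverse-closed left transversal. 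Three compressed points deserve one line each in a full write-up: in (b)$\Rightarrow$(a), the representative of the coset $H$ in an inverse-closed transversal is an involution lying in $H$ and should be swapped for $e$ before setting $S=T\setminus\{e\}$, so that $S\cap H=\emptyset$ and $e\notin S$; in the self-inverse even case the elements $t_L$ must be chosen once per orbit of the pairing, with $\sigma(L)$ then assigned $t_L^{-1}$ (which is what your wording intends); and $|H|/|H\cap H^a|$ counts right cosets in $HaH$ while your construction counts left cosets, which agree because $H\cap H^{a^{-1}}=(H\cap H^a)^{a^{-1}}$. With those remarks added, the argument is complete and is in the same double-coset spirit as the original proof in \cite{CWX2020}.
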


\begin{remark}
    Since the left coset convention is commonly used in the literature, we state the left coset version of~\cite[Theorem~1.2]{CWX2020} in Theorem~\ref{thm:1}. We also note that the equivalence between~\eqref{enu:pc1} and~\eqref{enu:pc4} was previously established in~\cite{ZZ2022}.
\end{remark}

Observe that for a subgroup $H$ of a group $G$, if $H$ is a perfect code of $G$, then it is also a perfect code of any subgroup of $G$ that contains $H$ (see Lemma~\ref{lm:quotient}\eqref{enu:2.2a}). Thus, in determining whether $H$ is a perfect code of $G$, it is natural to begin by examining whether $H$ is a perfect code of the smallest overgroup properly containing it. In other word, we focus on the case when $H$ is a maximal subgroup of $G$. From a coding-theoretic perspective, a good perfect code should have large cardinality in order to make efficient use of the channel. Given that maximal subgroups of the symmetric group $S_n$ form a rich and extensively studied class, the authors of~\cite{XZZ2024} initiated the study of perfect codes arising from such subgroups. Building on these motivations, we consider the following, more general question in the present paper.

\begin{question}\label{ques:1}
Given a group $G$ and a maximal subgroup $M$ of $G$, is $M$ a perfect code of $G$?
\end{question}

In this paper, we develop a systematic framework to address this question. In parallel, we introduce a new approach to studying subgroup perfect codes, prompted by a new perspective we discover through an equivalent formulation (see Theorem~\ref{thm:pciff}) of Theorem~\ref{thm:1}. In fact, Theorem~\ref{thm:pciff} reveals that whether a subgroup $H$ is a perfect code of a group $G$ depends on whether $H$ admits ``local complements". Unlike the existing literature, which focuses primarily on the structure of the ambient group $G$, our new perspective highlights the role of the internal structure of the subgroup $H$ in determining perfect code properties. We will see that this shift in perspective appears to be more effective in identifying subgroup perfect codes.

Building on Theorem~\ref{thm:pciff}, we demonstrate how this new viewpoint can be applied to establish necessary and sufficient conditions for a group $H$ with certain structure to be a perfect code in an arbitrary overgroup $G$. These results will prove particularly useful in the analysis of concrete examples in Sections~\ref{sec:4} and~\ref{sec:6}--\ref{sec:7}. We also remark that similar techniques can be employed to obtain analogous characterizations for a broad range of groups.

From Section~\ref{sec:4}, we start to develop our general framework for addressing Question~\ref{ques:1}, namely, whether a maximal subgroup $M$ of a given group $G$ is a perfect code of $G$. The idea is as follows. Let $K=\mathrm{Core}_G(M)$. A necessary condition for $M$ to be a perfect code of $G$ is that the quotient $M/K$ is a perfect code of the quotient group $G/K$ (see Lemma~\ref{lm:quotient}\eqref{enu:quotient}). Therefore, in order to study Question~\ref{ques:1}, it is natural to begin by examining whether $M/K$ is a perfect code of $G/K$, which will be the focus of Sections~\ref{sec:5}--\ref{sec:7}. In Section~\ref{sec:4}, we study the lifting problem: assuming that $M/K$ is a perfect code of $G/K$, does it follow that $M$ is also a perfect code of $G$? We show that the answer is affirmative when $G$ is a split extension of $K$ by $G/K$, by establishing a useful result we call the \emph{Diamond Lemma}. However, the situation becomes more intricate in the non-split case, as illustrated by two examples.

From Section~\ref{sec:5}, we examine the problem of whether $M/K$ is a perfect code of $G/K$. A permutation group is said to be \emph{primitive} if it preserves no non-trivial partition. The right multiplication action of $G$ on $G/M$, the set of right cosets of $M$ in $G$, has kernel $K$, and thus $G/K$ acts primitively on $G/M$ with point stabilizer $M/K$. Accordingly, we are essentially led to the following question.

\begin{question}\label{ques:quasi}
    Given a primitive group $G$ with point stabilizer $M$, is $M$ a perfect code of $G$?
\end{question}

Primitive groups have been classified into eight types: $\mathrm{HA}$, $\mathrm{HS}$, $\mathrm{HC}$, $\mathrm{TW}$, $\mathrm{SD}$, $\mathrm{CD}$, $\mathrm{AS}$ and $\mathrm{PA}$ (see~\cite{P1997} for example). The main result of Section~\ref{sec:5} is the following theorem, which answers Question~\ref{ques:quasi} for all primitive groups other than the ones of type $\mathrm{AS}$ and $\mathrm{PA}$.

\begin{theorem}\label{thm:6}
    Let $G$ be a primitive group of type $\mathrm{HA}$, $\mathrm{HS}$, $\mathrm{HC}$, $\mathrm{TW}$, $\mathrm{SD}$ or $\mathrm{CD}$, and let $M$ be a point stabilizer. Then $M$ is a perfect code of $G$.
\end{theorem}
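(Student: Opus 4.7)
The plan is to apply Theorem~\ref{thm:1}\eqref{enu:pc2}: it suffices to exhibit an inverse-closed left transversal of $M$ in $G$, and since any subgroup is automatically inverse-closed, my strategy in each of the six types is to produce a subgroup $L\le G$ complementing $M$, that is, with $L\cap M=1$ and $|L|=[G:M]$ (equivalently $LM=G$). The O'Nan--Scott structure of each type furnishes a natural candidate for $L$ sitting inside (or equal to) $N:=\Soc(G)$.

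For the four regular-socle types, this step is essentially immediate. For HA and TW, $N$ itself acts regularly on the underlying set, so $N\cap M=1$ and $NM=G$; thus $L:=N$ works. For HS and HC, $N$ decomposes as a direct product of two isomorphic minimal normal subgroups, each of which acts regularly, and I take $L$ to be either one of them.

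The diagonal types require slightly more bookkeeping. For SD, $N=T^k$ with $T$ a non-abelian simple group and $k\ge 2$, and $M\cap N$ is the full diagonal $D=\{(t,\ldots,t):t\in T\}$. My candidate is the coordinate-deletion subgroup $L:=\{(t_1,\ldots,t_{k-1},e):t_i\in T\}\le N$. Because $L\le N$ we have $L\cap M=L\cap(M\cap N)=L\cap D$, and an element of $L\cap D$ has last coordinate $e$, which forces all coordinates to equal $e$; hence $L\cap M=1$. An order count gives $|L|=|T|^{k-1}=|N|/|D|=[G:M]$ (using $NM=G$, which holds by transitivity of $N$), so $L$ is a complement. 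For CD with $k=\ell m$ and $\ell,m\ge 2$, one has $M\cap N=D^m$ where $D$ is the diagonal of a $T^\ell$-block, and the analogous choice $L:=L_0^m$ with $L_0\le T^\ell$ the SD-style coordinate-deletion subgroup yields the same conclusion by the same argument.

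I do not foresee any serious obstacle: once the natural candidate $L$ is written down in each type, the only real verification is the identity $L\cap M=L\cap(M\cap N)$, which follows at once from $L\le N$. This also clarifies why the theorem excludes AS and PA: in type AS the socle is simple non-abelian and admits no obvious complement of $M\cap N$ inside $G$, while in type PA the point stabiliser in the socle has the shape $R^k$ for a non-regular $R\le T$ and in general admits no complement inside $N=T^k$ at all, so a more delicate analysis via Theorem~\ref{thm:1}\eqref{enu:pc3}--\eqref{enu:pc4} would be unavoidable there.
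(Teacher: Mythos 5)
Your proposal is correct and follows essentially the same route as the paper: a regular (minimal) normal subgroup serves as an inverse-closed transversal for types $\mathrm{HA}$, $\mathrm{HS}$, $\mathrm{HC}$, $\mathrm{TW}$, and for $\mathrm{SD}$/$\mathrm{CD}$ the coordinate-deletion subgroup complements the diagonal point stabilizer of the socle. The only cosmetic difference is that the paper transfers this complement from $N=\Soc(G)$ up to $G$ via its Diamond Lemma (Lemmas~\ref{lm:diamond} and~\ref{lm:normal}), whereas you verify directly, using $L\le N$ and the order count $|L|=|N|/|M\cap N|=[G:M]$, that $L$ is already an inverse-closed left transversal of $M$ in $G$ --- which is precisely the Diamond Lemma's proof inlined.
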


A permutation group is called \emph{quasiprimitive} if every non-trivial normal subgroup is transitive. As with primitive groups, every quasiprimitive group belongs to exactly one of the eight types mentioned above (see~\cite{P1997} for further details). Notably, Theorem~\ref{thm:6} also holds for quasiprimitive groups of the six types listed above (see Lemmas~\ref{lm:4types} and~\ref{lm:2types}). These results are also derived using the Diamond Lemma established in Section~\ref{sec:4}.

In Section~\ref{sec:6}, we investigate subgroup perfect codes in primitive groups of type~$\mathrm{AS}$. Several related results can be found in the literature~\cite{BLZ2025,CLZ2025,MWWZ2020,XZZ2024,Z2023}. In particular, the question of whether a maximal subgroup of $\PSL_2(q)$ forms a perfect code has been investigated in~\cite{MWWZ2020,Z2023}, and a complete classification is finally obtained in~\cite{CLZ2025}. Given the diversity and complexity of almost simple groups, a full classification of subgroup perfect codes in this setting appears infeasible. Therefore, we focus on extending the classification of maximal subgroup perfect codes in $\PSL_2(q)$ to the broader class of almost simple groups with socle $\PSL_2(q)$, as stated below, where the proof showcases the strength of the $2$-local analysis developed in Section~\ref{sec:3} as well as the Diamond Lemma in Section~\ref{sec:4}.

\begin{theorem}\label{thm:AS}
    Let $T=\PSL_2(q)$ with prime power $q\geq 4$, let $G$ be a primitive almost simple group with socle $T$, and let $M$ be the point stabilizer of $G$. Then $M$ is not a perfect code of $G$ if and only if one of the following holds:
    \begin{enumerate}[\rm(a)]
        \item \label{enu:1.5a} $q>7$, $q\equiv -1\pmod{8}$, $|G/T|$ is odd, and $M\cap T\cong D_{q-1}$;
        \item \label{enu:1.5b} $q>9$, $q\equiv 1\pmod{8}$, $|G/T|$ is odd, and $M\cap T\cong D_{q+1}$;
        \item \label{enu:1.5c} $q\equiv 1\pmod{8}$, $|G/T|\equiv 2\pmod{4}$, $G\not \leq \PSiL_2(q)$, $G\not \geq \PGL_2(q)$, and $M\cap T\cong D_{q+1}$.
    \end{enumerate}
\end{theorem}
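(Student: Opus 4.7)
The plan is to prove Theorem \ref{thm:AS} by running through the maximal subgroups of the almost simple group $G$ with socle $T = \PSL_2(q)$ using the classical classification of such subgroups, and applying the perfect code criteria of Theorem \ref{thm:1}, the local complement reformulation from Theorem \ref{thm:pciff}, and the Diamond Lemma of Section \ref{sec:4}. Since $G$ acts primitively on $G/M$ and $T$ is a non-abelian simple minimal normal subgroup, $M$ must be core-free, so we stay inside the almost simple setting throughout. The Diamond Lemma then lets me lift decisions from $T$ (already handled in \cite{CLZ2025}) to $G$ whenever a suitable split subextension is available.

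I would organise the argument by the isomorphism type of $M\cap T$: (i) the Borel subgroup; (ii) dihedral type, where $M\cap T\cong D_{q-1}$ or $D_{q+1}$; (iii) exceptional type, where $M\cap T$ is $A_4$, $S_4$, or $A_5$; and (iv) subfield type $\PSL_2(q_0)$ or $\PGL_2(q_0)$. Families (iii)--(iv) are small enough to be handled by direct $2$-local arguments that show the oddness hypothesis in Theorem \ref{thm:1}\eqref{enu:pc3} either fails or is satisfied together with an easily identified involution inside $aM$. For the Borel family the existence of a Levi torus complement supplies the involution $b\in aM$ needed by Theorem \ref{thm:1}\eqref{enu:pc3} whenever the oddness hypothesis applies, so a Borel subgroup is always a perfect code of $G$.

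The dihedral family (ii) is where the three obstructions in the statement appear. I would work with explicit $\SL_2(q)$ representatives of a split torus of order $q-1$ (resp.\ a non-split torus of order $q+1$) together with its normaliser, and then describe the corresponding subgroup $M$ of $G$ by adjoining the relevant outer elements. Theorem \ref{thm:1}\eqref{enu:pc3} then asks, for each conjugacy class of elements $a\in G$ with $a^2\in M$ and $|M|/|M\cap M^a|$ odd, whether $aM$ contains an involution. The comparison of $(q\pm 1)_2$ with $|G|_2$ governs whether such an $a$ exists: when $q\equiv -1 \pmod 8$ and $M\cap T\cong D_{q-1}$, or $q\equiv 1\pmod 8$ and $M\cap T\cong D_{q+1}$, the deficient $2$-part of $M$ forces the existence of $a$. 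The question then reduces to enumerating the involutions of $G$ meeting the specific coset $aM$, and this splits according to the structure of $G/T$: odd extensions give obstructions (a) and (b), while extensions of order $\equiv 2\pmod 4$ that avoid the diagonal automorphism but contain a field-type involution give obstruction (c). In every other case a diagonal or field involution is found in $aM$, so $M$ is a perfect code.

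The main obstacle will be a careful analysis of the involutions in $G\setminus T$ in the dihedral case, specifically tracking which cosets of $M$ in $G$ they occupy as the outer part of $G$ ranges over cyclic, diagonal, field, and graph-field extensions. The $2$-local analysis from Section \ref{sec:3} provides the structural input: it identifies an inverse-closed transversal of $M\cap T$ inside a Sylow $2$-normaliser of $T$, and the Diamond Lemma extends this to $G$ whenever $G$ splits over an appropriate subgroup containing $M\cap T$. Once the involutions of $G$ are tabulated and intersected with the relevant cosets, matching the three obstruction scenarios against (a)--(c) and verifying the positive conclusion in every other case reduces to a finite case analysis across the possibilities for $q \pmod 8$, $|G/T|$, and whether $G$ contains $\PGL_2(q)$ or lies inside $\PSiL_2(q)$.
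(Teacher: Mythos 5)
Your overall toolkit (Theorem~\ref{thm:1}, the $2$-local criteria of Section~\ref{sec:3}, the Diamond Lemma, and the known classification for $T=\PSL_2(q)$) is the right one, but your case analysis for the even-degree extensions is wrong in a way that contradicts the very statement you are proving. You assert that obstruction~(c) comes from extensions of order $\equiv 2\pmod 4$ ``that avoid the diagonal automorphism but contain a field-type involution'', i.e.\ from $G\leq\PSiL_2(q)$. Condition~(c), however, requires $G\not\leq\PSiL_2(q)$ and $G\not\geq\PGL_2(q)$: the failure occurs exactly for $G=T.\la\delta\phi^k\ra$ with $|\phi^k|_2=2$ (diagonal-times-field type), while for the pure field extensions $G=T\rtimes\la\phi^k\ra$ with $|\phi^k|$ even the point stabilizer is always a perfect code (the paper's Lemma~\ref{lm:6.6}). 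The mechanism is also the opposite of what your plan presumes: in the obstruction case the relevant index-two subextension $T.\la\delta\phi^{f/2}\ra$ is \emph{non-split} (Lemma~\ref{lm:non-split}), so the outer coset contains no involution at all; this forces a Sylow $2$-subgroup $Q$ of $M$ to be $C_4$ with $Q\cap T\cong C_2$, and everything then hinges on whether this $C_4$ is contained in a $C_8$ or $Q_8$ of $G$ (Lemma~\ref{lm:cyclic}): it is not in the split field case (Lemma~\ref{lm:C8Q8}), but a $Q_8$ does occur in the non-split diagonal-field case (Lemma~\ref{lm:Q8}). Your strategy of ``finding a diagonal or field involution in $aM$'' in all non-obstruction cases breaks down precisely at this split/non-split dichotomy, which your proposal never identifies; and since $\PSL_2(q)$ has no graph automorphism, the ``graph-field'' extensions you list do not arise.

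A second, smaller gap: by organizing the proof around the isomorphism type of $M\cap T$ taken from the maximal subgroup list of $T$, you implicitly assume $M\cap T$ is maximal in $T$. Maximality of $M$ in $G$ does not imply this; the paper must treat separately the cases $G=\PGL_2(q)$ (where $M\cap T$ can be $D_{q\pm1}$ non-maximal in $T$, handled by Lemma~\ref{lm:PGL}) and $q\in\{7,9,11\}$, including $G=\M_{10}$ with $M\cong\AGL_1(5)$, which is precisely the $q=9$ instance of obstruction~(c). Your proposal, as written, would miss these configurations. Where your plan does overlap with the paper (reduction to $T$ for odd $|G/T|$ via the Diamond Lemma, and $G\geq\PGL_2(q)$ via Lemma~\ref{lm:PGL}), it is sound, but the heart of the theorem is the trichotomy $G\geq\PGL_2(q)$ versus $G=T\rtimes\la\phi^k\ra$ versus $G=T.\la\delta\phi^k\ra$, and your proposed resolution of it is incorrect.
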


We observe from this result that certain almost simple groups, such as $\PGL_2(q)$, have the property that every maximal subgroup is a perfect code. This motivates us to pose the following question.

\begin{question}\label{ques:AS}
    Which almost simple groups have the property that every maximal subgroup is a perfect code?
\end{question}

As a special case of Question~\ref{ques:AS}, it is asked in~\cite[Question~1.9]{XZZ2024} that whether every maximal subgroup of $S_n$ is a perfect code. So far, the only results on this question are for intransitive maximal subgroups of $S_n$ and the affine groups $\AGL_1(p)$ as maximal subgroups of $S_p$ for odd primes $p$ (see~\cite[Propositions~4.1 and~4.3]{XZZ2024}). In Section~\ref{sec:6}, we also give the following result.

\begin{theorem}\label{thm:perfectcode}
    Let $q$ be a prime power such that $q\equiv 3\pmod{4}$, let $G=S_{q^2}$, and let $H=\AGL_2(q)< G$. Then $H$ is a perfect code of $G$.
\end{theorem}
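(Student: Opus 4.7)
The plan is to verify condition~\eqref{enu:pc3} of Theorem~\ref{thm:1}. Take $a\in G$ with $a^2\in H$ and $|H|/|H\cap H^a|$ odd; the task is to exhibit $b\in aH$ with $b^2=e$. I would first carry out a standard $2$-part reduction: since the odd part $a_{2'}$ of $a$ is a power of $a^2\in H$, it lies in $H$, and hence $aH=a_2H$, where $a_2$ denotes the $2$-part of $a$. Replacing $a$ by $a_2$, I may assume $a$ has $2$-power order, the case $|a|\leq 2$ being trivial via $b=a$; henceforth $|a|\geq 4$.

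Next I would analyse the Sylow $2$-structure of $H$. Since the translation subgroup $V\cong\bbF_q^2$ of $H=V\rtimes\GL_2(q)$ has odd order, a Sylow $2$-subgroup $P$ of $H$ coincides with a Sylow $2$-subgroup of $\GL_2(q)$. The hypothesis $q\equiv 3\pmod 4$ then forces $P$ to be semidihedral of order $4(q+1)_2$, whose unique central involution is the scalar matrix $z=-I$; as an element of $S_{q^2}$, this $z$ fixes the origin of $\bbF_q^2$ and has cycle type $1^1\,2^{(q^2-1)/2}$. The assumption that $|H|/|H\cap H^a|$ is odd, together with a Sylow argument inside $H\cap H^{a^{-1}}$, allows me to choose $P$ so that $P\leq H\cap H^{a^{-1}}$ and then find $h\in H\cap H^{a^{-1}}$ with $hPh^{-1}=a^{-1}Pa$; replacing $a$ by $ah\in aH$ I may therefore assume $a\in N_G(P)$ with $a^2$ still in $H$. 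Since $z$ is the unique central involution of $P$, it follows that $a\in C_G(z)$.

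The final step is to locate $b$ inside $C_G(z)$. A direct computation gives $C_H(z)=\GL_2(q)$ and $C_G(z)\cong S_1\times(S_2\wr S_{(q^2-1)/2})$, with the $S_1$ factor fixing the origin of $\bbF_q^2$. Because $\langle z\rangle$ is normal in $C_G(z)$ and contained in $C_H(z)$, the Diamond Lemma of Section~\ref{sec:4} reduces the existence of $b\in aH$ to the corresponding perfect-code problem for the image of $\GL_2(q)$ in the quotient $C_G(z)/\langle z\rangle$; combining this reduction with the $2$-local characterisation provided by Theorem~\ref{thm:pciff} in Section~\ref{sec:3} should yield the desired involution.

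The hard part is this last verification. One must construct an inverse-closed transversal for the image of $\GL_2(q)$ in $C_G(z)/\langle z\rangle$ which is compatible with the coset $a\langle z\rangle$, and the hypothesis $q\equiv 3\pmod 4$ is essential here: it forces the semidihedral form of $P$ and, via $q^2\equiv 1\pmod 8$, the correct parities in the cycle type of $z$. I expect the verification to require a short case analysis according to whether $a^2\in\{\pm I\}$ or $a^2$ lies in a non-central position of a (split or non-split) maximal torus of $\GL_2(q)$, with the central case treated directly using the involution $z$ and the non-central case treated by working inside the centraliser of $a^2$ in $\GL_2(q)$.
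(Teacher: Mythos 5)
Your setup is on the right track --- the Sylow $2$-subgroup of $H=\AGL_2(q)$ is indeed a semidihedral group $P$ of order $4(q+1)_2$ (the paper realizes it as $\la x,y\ra$ with $x$ multiplication by a generator of the Sylow $2$-subgroup of $\FF_{q^2}^\times$ and $y$ the Frobenius $v\mapsto v^q$), and $2$-local analysis is exactly how the paper proceeds. But two steps in your outline are genuinely broken. First, after choosing $h\in H$ with $P^{ah}=P$ you assert that you ``may assume $a\in \NN_G(P)$ with $a^2$ still in $H$''; this is unjustified, since $(ah)^2=a^2h^ah$ and $h^a\in H^a$ need not lie in $H$, so the hypotheses of Theorem~\ref{thm:1}(c) are not preserved by your coset adjustment. (The paper sidesteps this entirely by first invoking Lemma~\ref{lm:Z2023} to replace $H$ by $Q=P$, and then Theorem~\ref{thm:pciff}/Corollary~\ref{cor:a^2inQ}, whose hypotheses already restrict attention to $a\in\NN_G(Q)$ with $a^2\in Q$.) Second, the proposed reduction via the Diamond Lemma inside $\C_G(z)/\la z\ra$ does not go through: Lemma~\ref{lm:diamond} needs the product of the two subgroups to be a group, and $H\,\C_G(z)$ is not a subgroup of $S_{q^2}$; moreover, passing to the quotient modulo $\la z\ra$ is exactly where the difficulty sits, because $\C_G(z)\cong C_2\wr S_{(q^2-1)/2}$ contains elements of order $4$ squaring to $z$, so an involution in the quotient need not lift to an involution in the relevant coset (equivalently, $\la z\ra$ is not a perfect code of $\C_G(z)$, so Lemma~\ref{lm:quotient}\eqref{enu:quotient3} is unavailable).

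More fundamentally, the decisive verification is missing. After the correct reductions, what must be shown is that for every $a\in\NN_G(P)\setminus P$ with $a^2\in P$ (where $a$ is an arbitrary permutation of the $q^2$ points, not an element of any linear group), one has $a^2$ in the cyclic maximal subgroup $\la x\ra$ of $P$; by Corollary~\ref{cor:a^2inQ} this is equivalent to the perfect-code property. Your sketch defers this (``should yield the desired involution'', ``I expect a short case analysis''), and the suggested case split according to the position of $a^2$ in $\GL_2(q)$ does not address the real danger, namely elements of $\Sym(\FF_{q^2})$ normalizing $P$ whose square lies in $P\setminus\la x\ra$. This is precisely where the paper's substantive work lies: it identifies $\NN_{\Sym(\Omega)}(\la x\ra)$ (with $\Omega=\FF_{q^2}^\times$) inside the wreath product $\Sym(\la x\ra)\wr\Sym([n'])$, controls the normalizer of the diagonal copy of the regular representation (Lemma~\ref{lm:normalizer}), and derives the contradiction $r^2\equiv q\pmod 4$ with $q\equiv 3\pmod 4$ (Lemma~\ref{lm:g2inx}). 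Without an argument of this kind, your outline establishes only the framework, not the theorem.
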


In this case when $q$ is a prime, $\AGL_2(q)$ is a maximal subgroup of $S_{q^2}$ (see~\cite{LPS1987} for instance), and thus Theorem~\ref{thm:perfectcode} provides a further contribution toward addressing~\cite[Question~1.9]{XZZ2024}. The proof of Theorem~\ref{thm:perfectcode} combines our new approach from Section~\ref{sec:3}, which emphasizes the role of local subgroup structure, with existing techniques in the literature that focus on the global structure of the group. This interplay illustrates the inherent complexity of addressing Question~\ref{ques:quasi} for all primitive groups of type~$\mathrm{AS}$.

Finally, in Section~\ref{sec:7}, we focus on primitive groups of type $\mathrm{PA}$. These groups can be viewed as ``blow-ups" of primitive groups of type $\mathrm{AS}$---that is, given a primitive group $G$ of type $\mathrm{PA}$, there exists some primitive group $H$ of type $\mathrm{AS}$ and positive integer $n$ such that $G=H\wr S_n$, where the point stabilizer of $G$ is $M\wr S_n$ for some point stabilizer $M$ of $H$. We prove that if $M$ is a perfect code of $H$, then $M\wr S_n$ is a perfect code of $G$. In other words, if the point stabilizer of a primitive group of type~$\mathrm{AS}$ is a perfect code, then the point stabilizer of the corresponding primitive group of type~$\mathrm{PA}$ is also a perfect code. However, we show that the converse does not hold by presenting a general result and a family of examples, demonstrating that the study of Question~\ref{ques:quasi} for groups of type~$\mathrm{PA}$ cannot be reduced to the case of type~$\mathrm{AS}$ in a straightforward manner.

\section{Preliminaries}\label{sec:2}

Let $G$ be a group and $n$ be a positive integer. Denote the identity element of $G$ by $e$, the center of $G$ by $\ZZ(G)$, and denote by $\NN_G(H)$ the normalizer of $H$ in $G$ for $H\leq G$. The \emph{right regular permutation representation} $R(G)$ of $G$ is the subgroup of $\Sym(G)$ consisting of $R(g)$ with $g\in G$, where
\[
    R(g)\colon G\to G,\ \ h\mapsto hg.
\]
For subgroups $H$ and $K$ of $G$, denote $HK=\{hk\mid h\in H,\; k\in K\}$. For $x, y\in G$, denote $x^y=y^{-1}xy$ and $[x,y]=x^{-1}y^{-1}xy$. We use $1$ to denote the trivial group, use $D_{2n}$ to denote the dihedral group of order $2n$ if $n\geq 2$ (with $D_4$ being $C_2\times C_2$), and use $[n]$ to represent the set $\{1,\ldots, n\}$. For a prime $p$, denote by $\Syl_p(G)$ the set of all Sylow $p$-subgroups of $G$, denote by $n_p$ the largest $p$-power dividing $n$, and write $n_{p'}=n/n_p$.

The following three lemmas collect some properties of perfect codes from~\cite[Theorems~1.1 and~1.2]{Z2023},~\cite[Lemma~2.2 and Theorem~3.7]{ZZ2022} and~\cite[Proposition~3.4]{Z2023}, respectively.

\begin{lemma}\label{lm:Z2023}
    Let $G$ be a group and $H$ a subgroup of $G$, and let $Q\in \Syl_2(H)$. Then the following statements are equivalent:
    \begin{enumerate}[\rm(a)]
      \item $H$ is a perfect code of $G$;
      \item $Q$ is a perfect code of $G$;
      \item $Q$ is a perfect code of any Sylow $2$-subgroup of $\NN_G(Q)$;
      \item every Sylow $2$-subgroup of $H$ is a perfect code of $G$.
    \end{enumerate}
\end{lemma}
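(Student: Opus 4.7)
The plan is to take Theorem~\ref{thm:1}\eqref{enu:pc3} as the working criterion throughout: a subgroup $X$ is a perfect code of $G$ precisely when every $a\in G$ with $a^2\in X$ and $|X|/|X\cap X^a|$ odd admits some $b\in aX$ with $b^2=e$. The equivalence (b)$\Leftrightarrow$(d) is essentially a triviality: any two Sylow $2$-subgroups of $H$ are $H$-conjugate, hence $G$-conjugate, and the property of being a perfect code of $G$ is invariant under $G$-conjugation, as one sees directly from Theorem~\ref{thm:1}\eqref{enu:pc2} (inverse-closed transversals conjugate to inverse-closed transversals). It therefore suffices to establish (a)$\Leftrightarrow$(b)$\Leftrightarrow$(c) for the distinguished Sylow $2$-subgroup $Q$.

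For (a)$\Leftrightarrow$(b), the key observation is a strong consequence of specializing the criterion to $X=Q$: if $a^2\in Q$ and $|Q|/|Q\cap Q^a|$ is odd, then this index is simultaneously a power of $2$ and odd, forcing $Q^a=Q$ and hence $a\in\NN_G(Q)$; moreover $a^2\in Q$ combined with $Q$ being a $2$-group forces the order of $a$ to be a power of $2$. Thus the ``bad'' elements for $Q$ are automatically $2$-elements lying in $\NN_G(Q)$. For (a)$\Rightarrow$(b), given such an $a$, the inclusion $Q\leq H\cap H^a$ shows that $|H|/|H\cap H^a|$ is odd, so (a) yields $b_0=ah\in aH$ with $b_0^2=e$; the relation $(ah)^2=a^2h^ah=e$ together with the decomposition of $h$ into its $2$-part and $2'$-part then produces an involution of the form $aq$ with $q\in Q$. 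For the converse (b)$\Rightarrow$(a), Sylow's theorem in $H$ produces an element of $H$ conjugating a Sylow $2$-subgroup of $H\cap H^a$ to $Q$, after which a further adjustment by an element of $H$ yields a representative of $aH$ lying in $\NN_G(Q)$, and (b) then delivers the desired involution.

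Finally for (b)$\Leftrightarrow$(c), the same observation shows that every element $a\in G$ relevant to checking the criterion for $Q$, whether inside $G$ or inside a Sylow $2$-subgroup $P$ of $\NN_G(Q)$ containing $Q$, is a $2$-element of $\NN_G(Q)$; such an element is $\NN_G(Q)$-conjugate into $P$, so the two criteria coincide. The main obstacle throughout is the coset manipulation in the (a)$\Leftrightarrow$(b) step: converting an involution in $aH$ into one in $aQ$ requires exploiting the precise relation $h^{a^{-1}}=h^{-1}$ that follows from $(ah)^2=e$, and showing that the $2'$-component of $h$ can be absorbed without destroying the involution property. The Sylow conjugacy adjustments in the (b)$\Rightarrow$(a) direction are the other delicate point, since replacing $a$ by $ah$ with $h\in H$ does not in general preserve the condition $a^2\in H$, forcing the adjustment to be made simultaneously with the coset-level analysis.
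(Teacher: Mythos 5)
The paper itself gives no proof of this lemma---it is quoted verbatim from \cite[Theorems~1.1 and~1.2]{Z2023} (together with \cite[Proposition~3.4]{Z2023})---so your proposal has to stand on its own as a complete argument, and as written it does not. The easy parts are fine: conjugation-invariance of the perfect-code property gives (b)$\Leftrightarrow$(d), and your observation that $a^2\in Q$ together with $|Q|/|Q\cap Q^a|$ odd forces $a$ to be a $2$-element of $\NN_G(Q)$ is correct and does yield (b)$\Leftrightarrow$(c) by conjugating inside $\NN_G(Q)$. But the heart of the lemma is (a)$\Leftrightarrow$(b), and there you only name the two difficulties (your ``obstacles'') without resolving them; moreover, the mechanisms you hint at do not work as stated. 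For (a)$\Rightarrow$(b): from $(ah)^2=e$ you get $h^ah=a^{-2}\in Q$, but the $2$-part of $h$ need not lie in $Q$ and need not interact with the $2'$-part in any controlled way, so ``decomposing $h$ into its $2$-part and $2'$-part'' does not produce $q\in Q$ with $q^aq=a^{-2}$. The idea that actually closes this step is different: since $a^2\in H$ one has $H^{a^{-1}}=H^a$, so $D:=H\cap H^a$ is normalized by $a$, and $h\in D$ because $h^a=a^{-2}h^{-1}\in H$; then $b=ah$ is an involution of $X:=D\la a\ra$ lying outside the normal index-$2$ subgroup $D$, the group $Q\la a\ra$ is a Sylow $2$-subgroup of $X$, and Sylow-conjugating $b$ into $Q\la a\ra$ keeps it outside $D$, i.e.\ lands it in $aQ$. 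Nothing of this sort appears in your sketch.

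For (b)$\Rightarrow$(a) the gap is equally real. A representative of $aH$ in $\NN_G(Q)$ for the original $Q$ need not exist at all: $ah\in\NN_G(Q)$ forces $Q^a$ to be an $H$-conjugate of $Q$, which is not guaranteed. The adjustment has to be made for a Sylow $2$-subgroup $P$ of $H\cap H^a$ (legitimate via (b)$\Leftrightarrow$(d)), again exploiting that $a$ normalizes $D=H\cap H^a$: Sylow's theorem in $D$ gives $a_1=ad\in aH\cap\NN_G(P)$ with $a_1^2\in D$, and then---precisely the point you flag but leave open---a second adjustment inside $\NN_D(P)$ is needed to force the square into $P$, for instance by noting that $P$ is normal in $\NN_D(P)\la a_1\ra$ with quotient having a normal odd-order subgroup of index at most $2$, so Cauchy's theorem supplies $m$ with $(a_1m)^2\in P$; only then does Theorem~\ref{thm:1} applied to $P$ deliver an involution in $a_1mP\subseteq aH$. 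Since the two steps you label as the main obstacles are exactly where the content of the theorem lies, and your sketch supplies neither the $a$-invariance of $H\cap H^a$ nor the square-adjustment argument, these are genuine gaps rather than routine omissions.
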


\begin{lemma}\label{lm:quotient}
Let $G$ be a group, let $H$ be a subgroup of $G$, and let $N$ be a normal subgroup of $G$ contained in $H$. Then the following statements hold:
\begin{enumerate}[\rm(a)]
    \item \label{enu:2.2a} if $H$ is a perfect code of $G$ and $H\leq M\leq G$, then $H$ is a perfect code of $M$;
    \item \label{enu:quotient} if $H$ is a perfect code of $G$, then $H/N$ is a perfect code of $G/N$;
    \item \label{enu:quotient3} if $N$ and $H/N$ are perfect codes of $G$ and $G/N$, respectively, then $H$ is a perfect code of $G$.
\end{enumerate}
\end{lemma}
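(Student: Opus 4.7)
The plan is to derive all three parts from the equivalence $\eqref{enu:pc1}\Leftrightarrow\eqref{enu:pc2}$ in Theorem~\ref{thm:1}, which recharacterizes the perfect code property as the existence of an inverse-closed left transversal. Once translated into this language, parts~\eqref{enu:2.2a} and~\eqref{enu:quotient} become essentially cosmetic, while part~\eqref{enu:quotient3} carries the real content.

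For~\eqref{enu:2.2a}, I would start with an inverse-closed left transversal $T$ of $H$ in $G$ and consider $T \cap M$. Since $H \leq M$, a coset $tH$ meets $M$ precisely when $t \in M$, so $T \cap M$ hits each left coset of $H$ in $M$ exactly once; inverse-closedness is inherited from $T$ and from $M$ being closed under inversion. For~\eqref{enu:quotient}, I would push an inverse-closed left transversal $T$ of $H$ in $G$ forward through the quotient map $G \to G/N$: because $N \leq H$, distinct cosets $t_1 H \neq t_2 H$ in $G$ project to distinct cosets of $H/N$ in $G/N$, and inverse-closedness is preserved by any group homomorphism.

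The substantive step is~\eqref{enu:quotient3}. Here I would first exploit normality: for every $a \in G$ we have $N^a = N$, so $|N|/|N \cap N^a| = 1$ is odd, and Theorem~\ref{thm:1}\eqref{enu:pc3} applied to the perfect code $N$ collapses to the clean statement that every coset $aN$ with $a^2 \in N$ contains an element $b$ with $b^2 = e$. Next, I would take an inverse-closed left transversal $\bar{U}$ of $H/N$ in $G/N$ and lift it to a left transversal $U$ of $H$ in $G$ in a way compatible with inversion: each pair $\{\bar{u}, \bar{u}^{-1}\}$ with $\bar{u} \neq \bar{u}^{-1}$ lifts to $\{u, u^{-1}\}$ for any chosen $u$, while each self-inverse $\bar{u}$ (for which any lift $u$ satisfies $u^2 \in N$) is lifted to an involution or the identity supplied by the simplified hypothesis. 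Since $N \leq H$, the cosets of $H$ in $G$ are in bijection with those of $H/N$ in $G/N$, so $U$ is a left transversal; by construction $U = U^{-1}$, and applying the equivalence from Theorem~\ref{thm:1} delivers the conclusion.

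The main obstacle lies in handling the self-inverse classes in~\eqref{enu:quotient3}: a naive product-of-transversals approach (multiplying a transversal of $N$ in $G$ by one of $H/N$ in $G/N$) need not preserve inverse-closedness, and the hypothesis that $N$ is merely a perfect code of $G$ looks too weak to repair this. What unlocks the argument is precisely the normality of $N$, which converts the perfect code hypothesis on $N$ into the existence of involutory representatives in every relevant coset---exactly the data needed to lift the self-inverse part of $\bar{U}$.
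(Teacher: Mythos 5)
Your proposal is correct. Note that the paper itself gives no proof of Lemma~\ref{lm:quotient}: it is quoted from~\cite[Lemma~2.2 and Theorem~3.7]{ZZ2022}, so there is no in-paper argument to compare against; judged on its own, your transversal-based derivation from Theorem~\ref{thm:1}\eqref{enu:pc1}$\Leftrightarrow$\eqref{enu:pc2} is complete and sound. Parts~\eqref{enu:2.2a} and~\eqref{enu:quotient} are handled exactly as you say (intersecting with $M$, respectively pushing forward along $G\to G/N$, using $N\leq H$ to identify cosets of $H$ in $G$ with cosets of $H/N$ in $G/N$), and in part~\eqref{enu:quotient3} you correctly isolate the one genuine point: normality of $N$ makes the index condition in Theorem~\ref{thm:1}\eqref{enu:pc3} automatic, so every coset $aN$ with $a^2\in N$ contains an element of order at most $2$, which is precisely what is needed to lift the self-inverse members of an inverse-closed transversal of $H/N$ in $G/N$; the remaining members lift in inverse pairs, and the bijection between cosets of $H$ in $G$ and cosets of $H/N$ in $G/N$ (valid since $N\leq H$) shows the lifted set is a transversal.
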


\begin{lemma}\label{lm:elem}
    Let $G$ be a group with elementary abelian Sylow $2$-subgroups. Then every subgroup $H$ of $G$ is a perfect code of $G$.
\end{lemma}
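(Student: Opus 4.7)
The plan is to reduce Lemma~\ref{lm:elem} to the elementary verification that any subgroup of an elementary abelian $2$-group is a perfect code of that group, by cascading the reduction results already packaged in Lemma~\ref{lm:Z2023}.

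First, by Lemma~\ref{lm:Z2023}, $H$ is a perfect code of $G$ if and only if a Sylow $2$-subgroup $Q$ of $H$ is a perfect code of $G$. Since $Q$ is a $2$-subgroup of $G$, it embeds into some Sylow $2$-subgroup of $G$; by hypothesis that Sylow $2$-subgroup is elementary abelian, and hence so is $Q$. Applying part (c) of Lemma~\ref{lm:Z2023}, it suffices to verify that $Q$ is a perfect code of a Sylow $2$-subgroup $P$ of $\NN_G(Q)$. Such a $P$ is again a $2$-subgroup of $G$, hence elementary abelian; and because $Q$ is normal in $\NN_G(Q)$, the product $QP$ is a $2$-subgroup of $\NN_G(Q)$, which forces $Q\leq P$ by maximality of $P$.

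It remains to check that any subgroup $Q$ of an elementary abelian $2$-group $P$ is a perfect code of $P$. The quickest way is Theorem~\ref{thm:1}\eqref{enu:pc2}: since $P/Q$ is also elementary abelian, every element of $P/Q$ equals its own inverse, so any left transversal of $Q$ in $P$ is automatically inverse-closed. Equivalently, Theorem~\ref{thm:1}\eqref{enu:pc3} is immediate, because for every $a\in P$ one has $a^2=e\in Q$ and $Q^a=Q$ (as $P$ is abelian), so one may take $b=a$ in the condition.

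I do not foresee a genuine obstacle here; the argument is a short cascade of the reductions in Lemma~\ref{lm:Z2023} followed by an essentially tautological base case. The only mildly careful point is checking that Sylow $2$-subgroups of $\NN_G(Q)$ remain elementary abelian and contain $Q$, which follows from the standard facts that every $2$-subgroup of $G$ lies in some Sylow $2$-subgroup of $G$, and that a normal $2$-subgroup of a finite group is contained in every Sylow $2$-subgroup.
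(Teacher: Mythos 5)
Your argument is correct. Note that the paper does not actually prove this statement: Lemma~\ref{lm:elem} is quoted from \cite[Proposition~3.4]{Z2023}, so there is no in-paper proof to compare with, and your write-up supplies a short self-contained derivation from the tools the paper does state (Lemma~\ref{lm:Z2023} and Theorem~\ref{thm:1}). The cascade is sound: $H$ is a perfect code of $G$ iff $Q\in\Syl_2(H)$ is, iff $Q$ is a perfect code of a Sylow $2$-subgroup $P$ of $\NN_G(Q)$; such a $P$ is elementary abelian (being a $2$-subgroup of $G$) and contains $Q$ (as $Q\trianglelefteq\NN_G(Q)$, your $QP$ argument or the fact that a normal $2$-subgroup lies in every Sylow $2$-subgroup), and the base case is immediate from Theorem~\ref{thm:1}\eqref{enu:pc3} since $a^2=e\in Q$ and $Q^a=Q$ for every $a\in P$. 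One small imprecision: inverse-closedness of a transversal of $Q$ in $P$ should be justified by the fact that every element $t\in P$ satisfies $t^{-1}=t$ (so \emph{any} subset of $P$ is inverse-closed), rather than by a property of the quotient $P/Q$; but this is cosmetic, and your alternative verification via condition~\eqref{enu:pc3} already closes the point. One could even shortcut the reduction by applying Lemma~\ref{lm:Z2023} only through its equivalence (a)$\Leftrightarrow$(c), but your route is equally economical.
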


The following result is well known and will be used repeatedly in this paper.

\begin{lemma}[{\cite[8.2~Satz]{H1967}}]\label{lm:2group}
    A $2$-group has a unique involution if and only if it is either cyclic or a generalized quaternion group.
\end{lemma}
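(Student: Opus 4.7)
The plan is to treat the two directions separately. For the backward direction, in a cyclic group $C_{2^n}=\langle a\rangle$ the only involution is $a^{2^{n-1}}$; and in the generalized quaternion group $Q_{2^n}=\langle x,y\mid x^{2^{n-1}}=1,\ y^2=x^{2^{n-2}},\ y^{-1}xy=x^{-1}\rangle$ a short case analysis on the two families of elements $x^i$ and $x^iy$ shows that $x^{2^{n-2}}\in\ZZ(Q_{2^n})$ is the unique element of order $2$.

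For the forward direction, I would argue by induction on $|P|$. Let $z$ be the unique involution of $P$; then $z\in\ZZ(P)$ since the singleton set $\{z\}$ is conjugation-invariant. The base cases $|P|\leq 4$ are immediate: among the $2$-groups $1$, $C_2$, $C_4$ and $C_2\times C_2$ only the first three have a unique involution, and they are all cyclic. Assume now $|P|\geq 8$ and that the claim holds in smaller orders. A crucial preliminary step is that \emph{every abelian subgroup of $P$ is cyclic}: an abelian $2$-group with a decomposition into two or more nontrivial cyclic factors $C_{2^{a_1}}\times C_{2^{a_2}}\times\cdots$ contains at least three involutions (one from each of two factors together with their product), so the unique-involution hypothesis forces a single cyclic factor. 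Consequently, every maximal subgroup $M\leq P$ has index $2$, inherits $z$ as its unique involution, and by induction is either cyclic or generalized quaternion.

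The finishing step is to produce a cyclic subgroup of index $2$ in $P$ and then invoke the classical classification of $2$-groups containing such a subgroup, whose only isomorphism types are $C_{2^n}$, $C_{2^{n-1}}\times C_2$, $D_{2^n}$, $SD_{2^n}$, $M_{2^n}$ and $Q_{2^n}$; a direct count of involutions in each eliminates all but $C_{2^n}$ and $Q_{2^n}$. If some maximal subgroup $M$ of $P$ is already cyclic, we are done immediately. Otherwise every maximal subgroup is isomorphic to $Q_{2^{n-1}}$, and I would pass to its characteristic cyclic subgroup $A$ of order $2^{n-2}$ (unique in $Q_{2^{n-1}}$ once $n-1\geq 4$), observe that $A$ is normal in $P$, and exploit that $C_P(A)$ must then be abelian---hence cyclic by the previous paragraph---in order to exhibit a cyclic subgroup of $P$ of order $2^{n-1}$. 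The small orders $|P|\in\{8,16\}$ are handled by direct inspection. I expect the main obstacle to lie in this last extraction step together with the classification of $2$-groups possessing a cyclic maximal subgroup: both are standard inputs but require careful bookkeeping of squares and commutators rather than a single clean idea.
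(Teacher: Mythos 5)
The paper itself offers no proof of this lemma (it is quoted from Huppert, 8.2 Satz), so the only question is whether your sketch stands on its own. Most of it does: the backward direction, the centrality of the unique involution, the observation that every abelian subgroup must be cyclic, the induction making every maximal subgroup cyclic or generalized quaternion, and the plan of producing a cyclic subgroup of index $2$ and then counting involutions in the six isomorphism types with a cyclic maximal subgroup are all correct and are essentially the classical route.

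The genuine gap is in the final extraction step, exactly where you predicted trouble. Suppose $|P|=2^n\geq 32$ and every maximal subgroup is isomorphic to $Q_{2^{n-1}}$, and let $A$ be the unique cyclic subgroup of index $2$ in one such $M$, so $A\cong C_{2^{n-2}}$ is normal in $P$ and $C_M(A)=A$. Then $C_P(A)\cap M=A$, so $|C_P(A)|\in\{2^{n-2},2^{n-1}\}$; your claim that $C_P(A)$ is abelian (hence cyclic) is true in both cases, but it only ``exhibits a cyclic subgroup of order $2^{n-1}$'' in the second case. The case $C_P(A)=A$ is simply not addressed, and nothing in your outline excludes it. It must be ruled out separately, for instance as follows: every maximal subgroup of $P$ containing $A$ is quaternion, so every element of $P$ whose image in the order-$4$ group $P/A$ is an involution inverts $A$. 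If $P/A\cong C_2\times C_2$, pick $g_1,g_2$ in two distinct nontrivial cosets of $A$; then $g_1g_2$ lies in the third coset, hence inverts $A$, yet acts as the product of two inversions, i.e.\ trivially, which is impossible since $|A|\geq 8$. If $P/A\cong C_4$ and $C_P(A)=A$, a generator of $P/A$ acts faithfully on $A$ and its square inverts $A$, so inversion would be a square in $\Aut(C_{2^{n-2}})$; but $-1\not\equiv 1\pmod 8$, so $-1$ is not a square modulo $2^{n-2}$ when $n-2\geq 3$. Hence $C_P(A)>A$, and only then does your argument close. With some such step inserted (and the $|P|\in\{8,16\}$ inspections you already flagged), the proof is complete; without it, the key case of the induction is left open.
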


The remainder of this section is devoted to presenting some basic facts about projective linear groups that will be used in Sections~\ref{sec:6} and~\ref{sec:7}.

The following lemma provides the structure of Sylow $2$-subgroups of $\PGL_2(q)$ and $\PSL_2(q)$. It is well known and can be read off from~\cite[\S 7 and \S 8]{H1967} for instance.

\begin{lemma}\label{lm:Sylow2PSL}
    Let $q$ be a prime power, let $G\in \Syl_2(\PGL_2(q))$, and let $H\in\Syl_2(\PSL_2(q))$. Then the following statements hold:
    \begin{enumerate}[\rm(a)]
        \item if $q=2^f$ for some integer $f$, then $G\cong C_2^f\cong H$;
        \item if $q\equiv 1\pmod{4}$, then $G\cong D_{2(q-1)_2}$ and $H\cong D_{(q-1)_2}$;
        \item if $q\equiv 3\pmod{4}$, then $G\cong D_{2(q+1)_2}$ and $H\cong D_{(q+1)_2}$.
    \end{enumerate}
\end{lemma}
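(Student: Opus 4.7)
The plan is to compute $|\PGL_2(q)|=q(q^2-1)$ and $|\PSL_2(q)|=q(q^2-1)/\gcd(2,q-1)$, locate Sylow $2$-subgroups inside standard subgroups of $\PGL_2(q)$, and then intersect with $\PSL_2(q)$. For $q=2^f$ the $2$-part of $|\PGL_2(q)|$ is exactly $q$, and the unipotent subgroup
\[
    U=\left\{\begin{pmatrix}1 & a\\ 0 & 1\end{pmatrix}: a\in\FF_q\right\}\cong C_2^f
\]
is a subgroup of $\PSL_2(q)$ of order $q$, hence an elementary abelian Sylow $2$-subgroup of both $\PSL_2(q)$ and $\PGL_2(q)$; this settles part~(a).

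For $q$ odd, exactly one of $q-1$ and $q+1$ is divisible by $4$ while the other has $2$-part equal to~$2$, so $|\PGL_2(q)|_2=2(q-1)_2$ when $q\equiv 1\pmod{4}$ and $|\PGL_2(q)|_2=2(q+1)_2$ when $q\equiv 3\pmod{4}$, with $|\PSL_2(q)|_2$ being half of these in each case. I would then invoke the standard description of the torus normalizers in $\PGL_2(q)$: the normalizer of a split maximal torus is dihedral of order $2(q-1)$, and the normalizer of a non-split maximal torus is dihedral of order $2(q+1)$. A Sylow $2$-subgroup of $D_{2n}$ is itself dihedral of order $2n_2$, so matching $2$-parts yields $G\cong D_{2m}$ with $m=(q-1)_2$ or $m=(q+1)_2$ according to the residue of $q$ modulo~$4$, establishing the $\PGL_2(q)$ halves of~(b) and~(c).

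It remains to identify $H=G\cap\PSL_2(q)$, which has index~$2$ in $G$. Since $m\geq 4$ is even, $D_{2m}$ has exactly three index-$2$ subgroups, namely the rotation subgroup $C_m$ and two dihedral subgroups isomorphic to $D_m$, so it suffices to exhibit an involution of $H$ lying outside the cyclic torus of $G$. The Weyl element $w=\bigl(\begin{smallmatrix}0 & 1\\ -1 & 0\end{smallmatrix}\bigr)$ of $\SL_2(q)$ has determinant~$1$ and satisfies $w^2=-I$, so its image in $\PSL_2(q)$ is an involution that inverts and normalizes the split torus, and the analogous Weyl element of a non-split torus behaves identically. Since all Sylow $2$-subgroups of $\PGL_2(q)$ are conjugate, we may assume $G$ lies in a torus normalizer containing such a Weyl involution; this involution then lies in $H$ but outside the rotation subgroup, forcing $H\cong D_m$. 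The main obstacle in this scheme is precisely this last step, ruling out the cyclic possibility for $H$, and it is resolved by the explicit determinant-one reflection just described.
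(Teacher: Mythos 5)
The paper gives no proof of this lemma at all --- it is stated as well known and read off from Huppert --- so there is nothing to compare route-for-route; judged on its own terms, your argument is the standard one and parts (a) and (b) are correct: the order computations, the fact that the split and non-split torus normalizers in $\PGL_2(q)$ are dihedral of orders $2(q-1)$ and $2(q+1)$, that a Sylow $2$-subgroup of $D_{2n}$ ($n$ even) is dihedral of order $2n_2$, and the reduction of $H$ to one of the three index-$2$ subgroups of $D_{2m}$ are all fine, as is the split-torus Weyl element argument when $q\equiv 1\pmod 4$.

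The genuine gap is the sentence ``the analogous Weyl element of a non-split torus behaves identically,'' which is what the whole of case (c) rests on. Realize the non-split torus by letting $\FF_{q^2}^\times$ act on $\FF_{q^2}\cong\FF_q^2$ by multiplication; the natural Weyl element is then the Frobenius $\sigma\colon v\mapsto v^q$, and computing in a basis $\{1,\alpha\}$ with $\alpha^2$ a non-square gives $\det\sigma=-1$. When $q\equiv 3\pmod 4$ the element $-1$ is a non-square, so the image of $\sigma$ lies in $\PGL_2(q)\setminus\PSL_2(q)$: the most natural analogue of your $w$ does \emph{not} behave identically, and it fails precisely in the case you need. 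The step is repairable: every element $t\sigma$ of the non-trivial coset satisfies $(t\sigma)^2=t^{q+1}=N(t)$ (a scalar) and $\det(t\sigma)=-N(t)$, and since the norm $N\colon\FF_{q^2}^\times\to\FF_q^\times$ is surjective you may choose $t$ with $N(t)=-1$; then $g=t\sigma$ has $\det g=1$ and $g^2=-I$, so its image is an involution of $\PSL_2(q)$ inverting the torus, and the rest of your argument (take $G=\langle C_m,\bar g\rangle$ and conclude $H\neq C_m$, hence $H\cong D_m$) goes through. Alternatively you can bypass the construction entirely: if $H$ were cyclic, then $\PSL_2(q)$ would have a normal $2$-complement by Burnside, which is impossible for $q\geq 5$ by simplicity and false by inspection for $q=3$; since the other two index-$2$ subgroups of $D_{2m}$ are dihedral, $H\cong D_m$ follows. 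As written, though, the non-split case is asserted rather than proved, and the assertion as stated is not true of the obvious candidate element.
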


The following lemma gives tables of the maximal subgroups of $\PSL_2( q)$ and $\PGL_2(q)$, which can be read off from~\cite[Tables~8.1 and~8.2]{BHD2013}. Based on the structure of each maximal subgroup, its Sylow $2$-subgroup can be determined, as given in the tables (with the help of Lemma~\ref{lm:Sylow2PSL} for some rows).

\begin{lemma}\label{lm:MaximalPSL}
Let $q = p^f \geq 5$ for some odd prime $p$ and positive integer $f$. Then the maximal subgroups $M$ of $\PSL_2(q)$ and $\PGL_2(q)$ are described in Tables~$\ref{tab:M}$ and~$\ref{tab:MPGL}$, respectively.
\end{lemma}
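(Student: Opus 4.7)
The plan is to obtain Lemma~\ref{lm:MaximalPSL} by combining two ingredients: the classical classification of maximal subgroups of $\PSL_2(q)$ and $\PGL_2(q)$ due to Dickson, and a case-by-case determination of a Sylow $2$-subgroup of each maximal subgroup via Lemma~\ref{lm:Sylow2PSL}. Since the enumeration of maximal subgroups is a standard result, the substantive content of the lemma lies in the second column of the tables, recording the isomorphism type of a Sylow $2$-subgroup of each~$M$.

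First, I would invoke \cite[Tables~8.1 and~8.2]{BHD2013} to list the maximal subgroups $M\leq\PSL_2(q)$ and $M\leq\PGL_2(q)$; the hypothesis $q\geq 5$ excludes the small cases where maximality fails (and any further row-specific congruence conditions on $q$ are taken over verbatim from that reference). The families that arise are the familiar ones: a Borel subgroup (stabilizer of a point on the projective line), two classes of dihedral subgroups (normalizers of split and non-split tori), the exceptional subgroups isomorphic to $A_4$, $S_4$ or $A_5$ under appropriate congruences on $q$, and subfield subgroups of the form $\PSL_2(q_0)$, $\PGL_2(q_0)$ (or extensions thereof) with $q=q_0^r$ for a prime $r$.

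Next I would compute a Sylow $2$-subgroup family by family. The Borel subgroup has the form $[q]{:}C_{(q-1)/2}$ in $\PSL_2(q)$ and $[q]{:}C_{q-1}$ in $\PGL_2(q)$; as $p$ is odd, the normal Sylow $p$-subgroup contributes nothing at the prime~$2$, so a Sylow $2$-subgroup is cyclic of the appropriate $2$-part. The dihedral maximal subgroups $D_{q\pm 1}\leq\PSL_2(q)$ and $D_{2(q\pm 1)}\leq\PGL_2(q)$ have dihedral Sylow $2$-subgroups of order $(q\pm 1)_2$ or $2(q\pm 1)_2$ respectively, since any Sylow $2$-subgroup of a dihedral group $D_{2n}$ is dihedral of order $2n_2$ (reducing to $C_2$ or $C_2^2$ when $n_2\leq 2$). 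The Sylow $2$-subgroups of $A_4$, $S_4$, $A_5$ are $C_2^2$, $D_8$, $C_2^2$ respectively. Finally, for each subfield-type maximal subgroup I would apply Lemma~\ref{lm:Sylow2PSL} with parameter $q_0$ in place of $q$, and in the case of an extension of $\PSL_2(q_0)$ inside $\PGL_2(q_0)$ (or inside $\PSL_2(q_0^2)$) I would account for the extra factor of $2$ in the order of a Sylow $2$-subgroup that the corresponding diagonal or field-automorphism extension contributes.

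No step is conceptually deep: the result is essentially a table lookup combined with Lemma~\ref{lm:Sylow2PSL}. The only bookkeeping obstacle is making the case splits in \cite{BHD2013} match the rows of Tables~\ref{tab:M} and~\ref{tab:MPGL} faithfully, in particular distinguishing when a candidate dihedral or subfield subgroup is genuinely maximal rather than contained in one of the other families, and correctly tracking the $2$-parts $(q\pm 1)_2$ in each row. Once the row labels agree with \cite{BHD2013}, the Sylow $2$-subgroup entries follow mechanically.
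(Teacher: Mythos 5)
Your proposal is correct and follows essentially the same route as the paper, which likewise takes the subgroup lists directly from \cite[Tables~8.1 and~8.2]{BHD2013} and then determines a Sylow $2$-subgroup row by row, invoking Lemma~\ref{lm:Sylow2PSL} for the subfield rows. The only divergence is the $A_5$ row, where your answer $C_2^2$ (i.e.\ $D_4$ in the paper's notation) is the correct Sylow $2$-subgroup and the $D_8$ entry in Table~\ref{tab:M} appears to be a typo in the paper rather than an error on your part.
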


\begin{table}[h]
    \centering
    \renewcommand{\arraystretch}{1.5}
    \begin{tabular}{c|c|c}
    % Thick top rule
    \noalign{\hrule height 1.2pt}
    $M$ & Sylow $2$-subgroup of $M$ & Conditions \\
    % Thick rule after header
    \noalign{\hrule height 1.2pt}
    $C_p^f \rtimes C_{(q-1)/2}$ & $C_{(q-1)_2/2}$ & — \\
    \hline
    $D_{q-1}$ &
    \rule{0pt}{4ex}$\begin{aligned}
        &D_{(q-1)_2} &&\text{if }q\equiv1\pmod4\\
        &C_2 &&\text{if }q\equiv 3\pmod4
    \end{aligned}$ & $q \ge 13$ \\
    \hline
    $D_{q+1}$ &
    \rule{0pt}{4ex}$\begin{aligned}
        &D_{(q+1)_2} &&\text{if }q\equiv 3\pmod4\\
        &C_2 &&\text{if }q\equiv 1\pmod4
    \end{aligned}$ & $q\neq7,9$ \\
    \hline
    $\PGL_2(q_0)$ &
    \rule{0pt}{4ex}$\begin{aligned}
        &D_{2(q_0-1)_2} &&\text{if }q_0\equiv1\pmod4\\
        &D_{2(q_0+1)_2} &&\text{if }q_0\equiv 3\pmod4
    \end{aligned}$ & $q = q_0^2$ \\
    \hline
    $\PSL_2(q_0)$ &
    \rule{0pt}{4ex}$\begin{aligned}
        &D_{(q_0-1)_2} &&\text{if }q_0\equiv1\pmod4\\
        &D_{(q_0+1)_2} &&\text{if }q_0\equiv 3\pmod4
    \end{aligned}$ & $q = q_0^r$ with $r$ odd prime \\
    \hline
    $A_5$ & $D_8$ &
    \rule{0pt}{4ex}$\begin{aligned}
        &q=p\equiv\pm1\pmod{10} \text{ or}\\
        &q=p^2 \text{ with }p\equiv\pm3\pmod{10}
    \end{aligned}$ \\
    \hline
    $A_4$ & $D_4$ & $q=p\equiv\pm3, 5, \pm 13\pmod{40}$ \\
    \hline
    $S_4$ & $D_8$ & $q=p\equiv\pm1\pmod8$ \\
    % Thick bottom rule
    \noalign{\hrule height 1.2pt}
    \end{tabular}
    \caption{Maximal subgroups of $\PSL_2(q)$ and their Sylow $2$-subgroups}
    \label{tab:M}
\end{table}

\begin{table}[h]
    \centering
    \renewcommand{\arraystretch}{1.5}
    \begin{tabular}{c|c|c}
    % Thick top rule
    \noalign{\hrule height 1.2pt}
    $M$ & Sylow $2$-subgroup of $M$ & Conditions \\
    % Thick rule after header
    \noalign{\hrule height 1.2pt}
    $C_p^f \rtimes C_{q-1}$ & $C_{(q-1)_2}$ & $q=p^f$ \\
    \hline
    $D_{2(q-1)}$ & $D_{2(q-1)_2}$ & $q >5$ \\
    \hline
    $D_{2(q+1)}$ & $D_{2(q+1)_2}$ & \\
    \hline
    $\PGL_2(q_0)$ &
    \rule{0pt}{4ex}$\begin{aligned}
        &D_{2(q_0-1)_2} &&\text{if }q_0\equiv1\pmod4,\\
        &D_{2(q_0+1)_2} &&\text{if }q_0\equiv 3\pmod4
    \end{aligned} $ & $q = q_0^r$ \\
    \hline
    $\PSL_2(q)$ &
    \rule{0pt}{4ex}$\begin{aligned}
        &D_{(q-1)_2} &&\text{if }q\equiv1\pmod4,\\
        &D_{(q+1)_2} &&\text{if }q\equiv 3\pmod4
    \end{aligned}$ & \\
    \hline
    $S_4$ & $D_8$ & $q\equiv\pm 3\pmod8$ \\
    % Thick bottom rule
    \noalign{\hrule height 1.2pt}
    \end{tabular}
    \caption{Maximal subgroups of $\PGL_2(q)$ and their Sylow $2$-subgroups}
    \label{tab:MPGL}
\end{table}

The following result is well-known. Although it can be proved by standard linear-algebraic arguments, we give a more group-theoretic proof for the sake of self-containment and out of our own interest.

\begin{lemma}\label{lm:involutionPSL}
    For each prime power $q$, all involutions in $\PSL_2(q)$ are conjugate.
\end{lemma}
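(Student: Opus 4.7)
The plan is to handle even and odd $q$ separately, in each case using Sylow's theorem to reduce to a single Sylow $2$-subgroup and then finishing by a transitivity argument.

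When $q=2^f$, set $G=\PSL_2(q)=\SL_2(q)$. By Lemma~\ref{lm:Sylow2PSL}(a) a Sylow $2$-subgroup $U$ of $G$ is elementary abelian of order $q$, so every involution of $G$ is $G$-conjugate into $U$. Taking $U$ to consist of the upper unitriangular matrices, the split torus $T$ of diagonal matrices in $\SL_2(q)$ normalises $U$ and conjugates $\begin{pmatrix}1&x\\0&1\end{pmatrix}$ to $\begin{pmatrix}1&a^2x\\0&1\end{pmatrix}$ via $\mathrm{diag}(a,a^{-1})$. Since $a\mapsto a^2$ is a bijection on $\FF_q^*$ in characteristic $2$, this action is already transitive on $U\setminus\{e\}$, finishing the even case.

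For $q$ odd I would pass to the double cover $\pi\colon \SL_2(q)\to \PSL_2(q)$. A short direct argument shows that $-I$ is the only involution of $\SL_2(q)$: any $g$ with $g^2=I$ is diagonalisable with eigenvalues in $\{\pm1\}$, and $\det g=1$ then forces $g=\pm I$. Consequently every involution of $\PSL_2(q)$ lifts to an element $g\in\SL_2(q)$ with $g^2=-I$, equivalently an element of order exactly $4$; and every such $g$ projects to an involution of $\PSL_2(q)$. The lemma is therefore equivalent to the assertion that all elements of order $4$ in $\SL_2(q)$ are conjugate.

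To prove this last assertion I would combine the Cayley-Hamilton identity $g^2-\mathrm{tr}(g)g+I=0$ with $g^2=-I$ to deduce $\mathrm{tr}(g)=0$, so every order-$4$ element has characteristic polynomial $x^2+1$ and all such elements are conjugate in $\GL_2(q)$ by rational canonical form. Then the centraliser $C_{\GL_2(q)}(g)$ is a maximal torus of $\GL_2(q)$ (split of order $(q-1)^2$ when $-1$ is a square in $\FF_q$, non-split of order $q^2-1$ otherwise), and in either case the determinant map restricted to this torus is surjective onto $\FF_q^*$; a standard coset adjustment then upgrades $\GL_2(q)$-conjugacy to $\SL_2(q)$-conjugacy. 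I expect the main obstacle to be precisely this promotion from $\GL_2(q)$- to $\SL_2(q)$-conjugacy; a more purely group-theoretic alternative would be to exploit that the Sylow $2$-subgroups of $\SL_2(q)$ are generalised quaternion (a consequence of Lemma~\ref{lm:2group} together with the uniqueness of $-I$ as an involution) and analyse fusion inside them, though this appears to require finer bookkeeping than the torus argument.
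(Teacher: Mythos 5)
Your proof is correct, but for odd $q$ it takes a genuinely different route from the paper's --- in fact it is essentially the ``standard linear-algebraic argument'' that the paper mentions and deliberately avoids. In the even case the two arguments coincide: the paper notes that a Sylow $2$-subgroup lies in $\AGL_1(q)$, where all involutions are conjugate, and your computation with the diagonal torus acting on the unitriangular subgroup simply spells this out. In the odd case the paper works entirely inside $\PSL_2(q)$: its Sylow $2$-subgroups are dihedral (Lemma~\ref{lm:Sylow2PSL}), one fixes a cyclic subgroup $C$ of index $2$ in one of them, and a parity argument on the action on the cosets of $C$ --- using that $\PSL_2(q)$ has no subgroup of index $2$ --- shows that every involution fixes a coset, hence is conjugate into $C$, whose unique involution then represents the single class. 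You instead lift to $\SL_2(q)$: since $-I$ is its unique involution, involutions of $\PSL_2(q)$ correspond to order-$4$ elements of $\SL_2(q)$; Cayley--Hamilton gives trace $0$, so all such elements have characteristic (and minimal) polynomial $x^2+1$ and are $\GL_2(q)$-conjugate, and this descends to $\SL_2(q)$-conjugacy because the determinant (the norm map in the non-split case) is surjective on the centralizing torus, i.e.\ $\GL_2(q)$ is the product of $\SL_2(q)$ with that centralizer. Your route is sound and somewhat shorter if one is willing to quote rational canonical form and norm surjectivity; the paper's argument buys self-containment and stays purely group-theoretic, which is exactly the authors' stated reason for preferring it. One small imprecision: your claimed ``equivalence'' is really only an implication in the direction you use ($\SL_2(q)$-conjugacy of the order-$4$ lifts yields $\PSL_2(q)$-conjugacy of the involutions), but that is all the lemma requires.
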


\begin{proof}
    Let $G=\PSL_2(q)$. If $q$ is even, then a Sylow $2$-subgroup of $G=\SL_2(q)$ is contained in the subgroup $\AGL_1(q)$ of $G$, where all involutions are conjugate. Now assume that $q$ is odd. Write $|G|=2^kr$ for some integer $k\geq 2$ and odd integer $r\geq 1$, and let $P\in \Syl_2(G)$. By Lemma~\ref{lm:Sylow2PSL}, $P\cong D_{2^k}$. Take a cyclic subgroup $C$ of $P$ of index $2$. It suffices to show that each involution in $G$ is conjugate to an element of $C$ in $G$.

    Let $\Omega$ be the set of right cosets of $C$ in $G$, and let  $\varphi: G\to \Sym(\Omega)$ be the right multiplication action of $G$ on $\Omega$. Since $|\Omega|=2|G|/|P|$, we have $|\Omega|=2(2\ell+1)$ for some integer $\ell\geq 0$. Take an arbitrary involution $x\in G\setminus C$. Suppose for a contradiction that $x^\varphi$ has no fixed point in $\Omega$. Since $x^\varphi\neq e$, it follows that $x^\varphi$ is the product of $2\ell+1$ disjoint transpositions and hence an odd permutation. However, $G$ has no subgroup of index $2$, a contradiction. Therefore, $x^\varphi$ has a fixed point, say, $Cg$ for some $g\in G$. This means that $Cgx=(Cg)^{x^\varphi}=Cg$, and so $gxg^{-1}\in C$, completing the proof.
\end{proof}

The following notation will be used throughout the discussion related to almost simple groups with socle $\PSL_2(q)$.

\begin{notation}\label{nota:PGaL}
Let $q=p^f\geq 4$ for some prime $p$ and positive integer $f$, let $\psi$ be the quotient from $\GaL_2(q)$ to $\PGaL_2(q)$ modulo scalars, let $\delta\in\Out(\PSL_2(q))$ be the diagonal automorphism, and let $\phi$ be the Frobenius field automorphism taking the $p$-th power. By abuse of notation, the automorphisms of $\GL_2(q)$ and $\PGL_2(q)$ respectively induced by $\phi$ are still denoted by $\phi$.
\end{notation}

The following result is well known, but we provide a proof here for self-containment.

\begin{lemma}\label{lm:non-split}
  Under Notation~$\ref{nota:PGaL}$, if $q$ is odd and $f$ is even, then the extension $\PSL_2(q).\la \delta\phi^{f/2}\ra$ is non-split.
\end{lemma}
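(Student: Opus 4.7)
The plan is to argue by contradiction. Assume the extension splits, so there exists an involution $x\in\PGaL_2(q)$ lying in the nontrivial coset of $\PSL_2(q)$ inside $\PSL_2(q).\la \delta\phi^{f/2}\ra$. Writing $r=f/2$ and using the semidirect decomposition $\PGaL_2(q)=\PGL_2(q)\rtimes\la\phi\ra$, such an $x$ has the form $g\phi^r$ for some $g\in\PGL_2(q)\setminus\PSL_2(q)$, and the condition $x^2=e$ reduces to $g\cdot g^{\phi^r}=e$ in $\PGL_2(q)$. So the task is to prove that no such $g$ exists.

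For the analysis, I would lift $g$ to a matrix $A\in\GL_2(q)$, whereupon the relation $g\cdot g^{\phi^r}=e$ in $\PGL_2(q)$ becomes $A\cdot A^{\phi^r}=\mu I$ for some scalar $\mu\in\FF_q^*$. Applying $\phi^r$ to this identity and using that $\phi^{2r}=\phi^f$ acts trivially on $\FF_q$ yields $A^{\phi^r}\cdot A=\mu^{q_0}I$ with $q_0=p^{f/2}$; comparing this with $A^{\phi^r}\cdot A=\mu I$, which follows by pre-multiplying $A\cdot A^{\phi^r}=\mu I$ by $A^{-1}$ and post-multiplying by $A$, forces $\mu\in\FF_{q_0}^*$. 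I would then rescale $A$ by $\beta\in\FF_q^*$ chosen so that $\beta^{q_0+1}=\mu^{-1}$; such $\beta$ exists because $\beta\mapsto\beta^{q_0+1}$ is precisely the surjective norm map $\FF_q^*\to\FF_{q_0}^*$. The rescaled matrix still represents $g$ in $\PGL_2(q)$ and now satisfies $A\cdot A^{\phi^r}=I$, so taking determinants yields $\det(A)^{q_0+1}=1$.

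To conclude, $\det(A)$ therefore lies in the kernel of the norm map, namely the cyclic subgroup of $\FF_q^*$ of order $q_0+1$ generated by $\alpha^{q_0-1}$ for any generator $\alpha$ of $\FF_q^*$. Since $p$ is odd, $q_0$ is odd and $q_0-1$ is even, so every element of this kernel is a square in $\FF_q^*$. Hence $\det(A)\in(\FF_q^*)^2$, which means that $g$ itself lies in $\PSL_2(q)$, contradicting the choice of $g$. I expect the main technical step to be the rescaling argument: one must carefully verify that the scalar $\mu$ lies in the subfield $\FF_{q_0}$, so that the equation $\beta^{q_0+1}=\mu^{-1}$ is solvable in $\FF_q^*$ and the reduction to the case $A\cdot A^{\phi^r}=I$ is legitimate; the parity observation $2\mid q_0-1$ then delivers the contradiction almost immediately.
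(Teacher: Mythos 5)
Your proof is correct, and it takes a genuinely different route through the key step. Both arguments make the same initial reduction: since the quotient has order $2$, splitting of $\PSL_2(q).\la\delta\phi^{f/2}\ra$ amounts to finding $A\in\GL_2(q)$ with non-square determinant and $A\,A^{\phi^{f/2}}=\mu I_2$ for some scalar $\mu$, and both first show $\mu$ lies in the subfield $\FF_{q_0}$ with $q_0=p^{f/2}$ (in the paper this is the identity $\lambda^{r-1}=1$, where the paper's $r$ is your $q_0$). After that the proofs diverge. The paper excludes the non-diagonalizable Jordan form via the trace, simultaneously diagonalizes $g$ and $\overline{g}$ over $\overline{\FF_q}$, deduces $\lambda_1^{r+1}=\lambda_2^{r+1}=\lambda$, and finally computes $\det(g)^{(q-1)/2}=1$. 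You instead use surjectivity of the norm map $\FF_q^\times\to\FF_{q_0}^\times$ to rescale $A$ so that $A\,A^{\phi^{f/2}}=I_2$, take determinants to get $\det(A)^{q_0+1}=1$, and note that the norm-one subgroup $\la\alpha^{q_0-1}\ra$ consists of squares because $q_0-1$ is even; hence $\det(A)$ is a square, contradicting $g\notin\PSL_2(q)$. Your version stays entirely inside $\FF_q$, needs no eigenvalue or Jordan-form case analysis, and works only with determinants, so it is shorter and more elementary; the paper's computation extracts more structural information about $g$ (its eigenvalues) en route, and its closing identity $\det(g)^{(q-1)/2}=(\det(g)^{q_0+1})^{(q_0-1)/2}=1$ is exactly the parity observation with which you conclude.
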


\begin{proof}
  Write $r=p^{f/2}$. It suffices to prove that for each $g\in \GL_2(q)$ with $\det(g)$ a non-square in $\FF_q^\times$ we have $(g\phi^{f/2})^2\notin \ZZ(\GL_2(q))$, or equivalently, there is no $\lambda\in \FF_q^\times$ such that $g\overline{g}=\lambda I_2$, where $\overline{g}:=g^{\phi^{f/2}}$. Suppose for a contradiction that
  \[
    g\overline{g}=\lambda I_2 \ \text{for some}\ g\in \GL_2(q) \ \text{with}\ \det(g)\ \text{a non-square and}\ \lambda\in \FF_q^\times.
  \]
  It follows that $\overline{g}=\lambda g^{-1}$ commutes with $g$, and so $\lambda I_2=g\overline{g}=\overline{g}g=\overline{g\overline{g}}=\overline{\lambda I_2}=\lambda^rI_2$. Thus,
  \begin{equation}\label{eq:lambdar-1}
    \lambda^{r-1}=1.
  \end{equation}
  If the Jordan canonical form of $g$ is 
  $
  \begin{pmatrix}
    \mu & 1\\
    0 & \mu
  \end{pmatrix}
  $ 
  for some $\mu\in \overline{\FF_q}$, then it follows from $2\mu=\mathrm{Tr}(g)\in \FF_q$ that $\mu \in \FF_q$, and so $\det(g)=\mu^2$ contradicts the condition that $\det(g)$ is non-square in $\FF_q^\times$. Therefore, the Jordan canonical form of $g$ is 
  $
  \begin{pmatrix}
    \lambda_1 & 0\\
    0 & \lambda_2
  \end{pmatrix}
  $
  for some $\lambda_1,\lambda_2\in \overline{\FF_q}$. Then both $g$ and $\overline{g}$ are diagonalizable. This combined with $g\overline{g}=\overline{g}g$ implies that there exists $h\in \GL_2(\overline{\FF_q})$ with
  \[
    h^{-1}gh=\begin{pmatrix}
      \lambda_1 & 0\\
    0 & \lambda_2
    \end{pmatrix} \ \text{ and }\
    h^{-1}\overline{g}h
    =\overline{\begin{pmatrix}
      \lambda_1 & 0\\
    0 & \lambda_2
    \end{pmatrix}}
    =\begin{pmatrix}
      \lambda_1^r & 0\\
    0 & \lambda_2^r
    \end{pmatrix}.
  \]
  Then we derive from $g\overline{g}=\lambda I_2$ that
  \[
    \lambda I_2=h^{-1}\lambda I_2 h=(h^{-1}gh)(h^{-1}\overline{g}h)=\begin{pmatrix}
        \lambda_1^{r+1} & 0\\
        0 & \lambda_2^{r+1}
    \end{pmatrix},
  \]
  and so $\lambda_1^{r+1}=\lambda=\lambda_2^{r+1}$. This together with~\eqref{eq:lambdar-1} leads to
  \[
    \det(g)^{(q-1)/2}=(\lambda_1\lambda_2)^{(q-1)/2}=(\lambda_1^{r+1}\lambda_2^{r+1})^{(r-1)/2}=(\lambda^2)^{(r-1)/2}=1,
  \]
  which implies that $\det(g)$ is a square in $\FF_q^\times$, a contradiction.
\end{proof}

Recall that $\PGL_2(q)$ is a permutation group on the set of one-dimensional subspaces of $\FF_q^2$ with the point stabilizer $C_p^f\rtimes C_{q-1}$. Moreover, every maximal subgroup of $\PGL_2(q)$ of the form  $C_p^f\rtimes C_{q-1}$ is a point stabilizer.

\begin{lemma}\label{lm:PGL2q}
    Under Notation~$\ref{nota:PGaL}$, let $q\equiv 3\pmod{4}$, let $M$ be a maximal subgroup of $G:=\PGL_2(q)$ isomorphic to $C_p^f\rtimes C_{q-1}$, and let $Q\in \Syl_2(M)$. Then any Sylow $2$-subgroup of $\NN_G(Q)$ is isomorphic to $D_4$.
\end{lemma}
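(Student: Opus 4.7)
The plan is a direct centralizer computation. First I would observe that since $q=p^f$ with $p$ odd and $q\equiv 3\pmod 4$, the unipotent part $C_p^f$ of $M$ has odd order, so $|M|_2=(q-1)_2=2$, and hence $Q\cong C_2$. In particular $\NN_G(Q)=\C_G(Q)$, and the task reduces to computing the Sylow $2$-subgroup of the centralizer in $G$ of a specific involution of $M$.

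Next I would fix coordinates. Since any maximal subgroup of $G=\PGL_2(q)$ of the shape $C_p^f\rtimes C_{q-1}$ is a point stabilizer on the projective line, up to $G$-conjugacy I may assume $M$ is the image in $\PGL_2(q)$ of the upper-triangular subgroup of $\GL_2(q)$. The Sylow $2$-subgroup of $M$ then lies in the image of the diagonal torus, so I may take $Q=\la t\ra$ where $t$ is the image of $\mathrm{diag}(1,-1)$.

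Then I would compute $\C_G(Q)$ directly. For $\overline{g}\in G$ with lift $g\in\GL_2(q)$, the centralizing condition becomes $gtg^{-1}=\lambda t$ for some $\lambda\in\FF_q^\times$; squaring forces $\lambda=\pm1$. The case $\lambda=1$ gives diagonal $g$, whose images form a cyclic group of order $q-1$, while the case $\lambda=-1$ gives antidiagonal $g$, which conjugates this torus by inversion. Hence $\C_G(Q)\cong D_{2(q-1)}$.

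Finally, since $(q-1)_2=2$, the Sylow $2$-subgroups of $D_{2(q-1)}$ have order $4$; such a subgroup is generated by the unique (and central) involution of the cyclic rotation subgroup of order $q-1$ together with any reflection, and these two commuting involutions yield $D_4$, as desired. I expect no serious obstacle beyond correctly pinning down $t$ as an involution of $M$ lying in the diagonal torus; the remaining work is routine matrix and dihedral-group arithmetic.
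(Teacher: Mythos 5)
Your proposal is correct and follows essentially the same route as the paper: fix $M$ as the point stabilizer (image of the upper-triangular matrices), take $Q$ generated by the image of $\mathrm{diag}(1,-1)$, and compute directly that $\NN_G(Q)$ consists of the images of the diagonal and antidiagonal matrices, a group of order $2(q-1)$ whose Sylow $2$-subgroups have order $4$ and are isomorphic to $D_4$. The only cosmetic difference is that you pass through $\NN_G(Q)=\mathbf{C}_G(Q)$ and the identification $D_{2(q-1)}$, whereas the paper reads the normalizer condition off as $ac=bd=0$ and exhibits a concrete $D_4$; the content is the same.
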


\begin{proof}
    Without loss of generality, let $M=G_{\la(0,1)\ra}$, where $\la (0,1)\ra$ is the $1$-dimensional subspace of $\FF_q^2$ generated by the vector $(0,1)$, and let $\FF_q^\times=\la \zeta\ra$. Then,
    \[
        M=\left\{\begin{pmatrix}
            a & b\\
            0 & d
        \end{pmatrix}^\psi \Big|~ a,b,d\in \FF_q,\ ad\neq 0\right\}
        =\Big\la \begin{pmatrix}
            1 & 1\\
            0 & 1
        \end{pmatrix}^\psi \Big\ra \rtimes \Big\la \begin{pmatrix}
            1 & 0\\
            0 & \zeta
        \end{pmatrix}^\psi \Big\ra  \cong C_p^f\rtimes C_{q-1}.
    \]
    Since $Q\in \Syl_2(M)$ and $q\equiv 3\pmod{4}$, we may assume without loss of generality that
    \[
        Q=\Big\la \begin{pmatrix}
            1 & 0\\
            0 & -1
        \end{pmatrix}^\psi \Big\ra\cong C_2.
    \]
    For each $x:=\begin{pmatrix}
        a & b\\
        c & d
    \end{pmatrix}^\psi\in G$, where $ad-bc\neq 0$, the element $x \in \NN_G(Q)$ if and only if
    \[
        \begin{pmatrix}
            a & b\\
            c & d
        \end{pmatrix}^\psi \begin{pmatrix}
            1 & 0\\
            0 & -1
        \end{pmatrix}^\psi=\begin{pmatrix}
            1 & 0\\
            0 & -1
        \end{pmatrix}^\psi \begin{pmatrix}
            a & b\\
            c & d
        \end{pmatrix}^{\psi}.
    \]
    This equality holds if and only if there exists $\lambda\in \FF_q^\times$ such that
    \[
        \begin{pmatrix}
            a & -b\\
            c & -d
        \end{pmatrix}=
        \lambda\begin{pmatrix}
            a & b\\
            -c & -d
        \end{pmatrix},
    \]
    which is equivalent to $ac=bd=0$. It follows that
    \[
        \NN_G(Q)=\Big\{\begin{pmatrix}
            1 & 0\\
            0 & d
        \end{pmatrix}^\psi \Big|~ d\in \FF_q^\times\Big\}\cup \Big\{\begin{pmatrix}
            0 & 1\\
            c & 0
        \end{pmatrix}^\psi \Big|~ c\in \FF_q^\times\Big\}.
    \]
    Therefore, $|\NN_G(Q)|=2(q-1)$, and so any Sylow $2$-subgroup of $\NN_G(Q)$ has order $2(q-1)_2=4$ and is conjugate to
    \[
        \Big\la \begin{pmatrix}
            1 & 0\\
            0 & -1
        \end{pmatrix}^\psi,\begin{pmatrix}
            0 & 1\\
            -1 & 0
        \end{pmatrix}^\psi\Big\ra\cong D_4.
    \]
    This completes the proof.
\end{proof}

We end this section by presenting the following two technical lemmas needed in Section~\ref{sec:6}.

\begin{lemma}\label{lm:C8Q8}
    Under Notation~$\ref{nota:PGaL}$, let $q\equiv 1\pmod{8}$, let $f$ be even, let $G=T\rtimes\la \phi^{f/2}\ra$, and let $H$ be a subgroup of $G$ such that $H\cong C_4$ and $H\cap T\cong C_2$. Then there does not exist $K\leq G$ such that $H\leq K$ and  $K\cong C_8$ or $Q_8$.
\end{lemma}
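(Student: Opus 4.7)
My plan is to assume $K\leq G$ contains $H$ with $K\cong C_8$ or $Q_8$ and derive a contradiction in each case. The case $K\cong C_8$ is immediate: writing $K=\la k\ra$, the unique cyclic subgroup of order $4$ of $K$ is $\la k^2\ra$, which must coincide with $H$. But $G/T\cong C_2$ forces every square in $G$ to lie in $T$; in particular $k^2\in T$, so $H\leq T$, contradicting $H\cap T\cong C_2$.

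For the substantive case $K\cong Q_8$, I will first pin down $K\cap T$. Since $[G:T]=2$ and $H\not\leq T$, we have $[K:K\cap T]=2$, so $K\cap T$ is one of the three cyclic subgroups of order $4$ in $Q_8$, necessarily distinct from $H$. Choosing generators $w$ of $K\cap T$ and $h$ of $H$, the standard $Q_8$ relations force $h^2=w^2$ (the central involution) and $hwh^{-1}=w^{-1}$. Because all involutions of $T$ are $T$-conjugate (Lemma~\ref{lm:involutionPSL}) and, for $q\equiv 1\pmod{8}$, the centralizer $C_T(w^2)\cong D_{q-1}$ contains a unique cyclic subgroup of order $4$, every cyclic subgroup of order $4$ in $T$ is $T$-conjugate. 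Using normality of $T$ in $G$, I may therefore assume without loss of generality that $w$ is the $\psi$-image of $\mathrm{diag}(\omega,\omega^{-1})\in\SL_2(q)$ for some $\omega\in\FF_q^\times$ of order $8$.

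Next I will write $h=t\phi^{f/2}$ with $t\in T$ and set $r=p^{f/2}$. A direct computation on the diagonal matrix $\mathrm{diag}(\omega,\omega^{-1})$ shows that, modulo $\{\pm I\}$, $\phi^{f/2}(w)=w$ when $r\equiv 1\pmod{4}$ and $\phi^{f/2}(w)=w^{-1}$ when $r\equiv 3\pmod{4}$. The relation $hwh^{-1}=w^{-1}$ then requires $t$ to invert $w$ in the first subcase and to centralize $w$ in the second; since $\mathrm{diag}(\omega,\omega^{-1})$ has distinct eigenvalues, a lift $\tilde t\in\SL_2(q)$ is respectively antidiagonal or diagonal. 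A short matrix multiplication then reduces the remaining equation $t\phi^{f/2}(t)=w^2$ to finding $a\in\FF_q^\times$ with $a^{r-1}$ (respectively $a^{r+1}$) equal to a primitive $4$th root of unity. The image of $a\mapsto a^{r-1}$ on $\FF_q^\times$ is the subgroup of order $r+1$, while the image of $a\mapsto a^{r+1}$ is $\FF_r^\times$ of order $r-1$. However $r\equiv 1\pmod{4}$ gives $r+1\equiv 2\pmod{4}$ and $r\equiv 3\pmod{4}$ gives $r-1\equiv 2\pmod{4}$, so in both subcases the relevant image contains no element of order $4$, yielding the desired contradiction.

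The main technical obstacle is the last paragraph: setting up the diagonal/antidiagonal reduction in $\SL_2(q)$ cleanly and then observing that the two parity subcases dovetail, in the sense that the hypothesis on $r\bmod 4$ that dictates which of $r\pm 1$ appears as the exponent is exactly the hypothesis preventing $4$ from dividing that exponent. The reduction to the explicit diagonal form of $w$ via the uniform $T$-conjugacy of cyclic subgroups of order $4$ is what makes this matrix computation possible.
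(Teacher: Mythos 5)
Your argument is correct, and it takes a genuinely different route from the paper's. The paper conjugates $K$ into an explicitly constructed Sylow $2$-subgroup $\la x,y\ra\rtimes\la\phi^{f/2}\ra$ of $G$ (with $\la x,y\ra\cong D_{(q-1)_2}$) and eliminates both $C_8$ and $Q_8$ by exponent arithmetic modulo $(q-1)_2/2$, extracting incompatible congruences from the relations $z^u=z^{-1}$ and $|z|=|u|=4$. You instead normalize $K\cap T$ to the image of $\mathrm{diag}(\omega,\omega^{-1})$ with $\omega$ of order $8$, split according to $r=p^{f/2}\bmod 4$, and reduce the remaining relation $h^2=w^2$ to whether $a^{r-1}$ (resp.\ $a^{r+1}$) can be an element of order $4$ in $\FF_q^\times$; this fails because the image of the relevant power map is cyclic of order $r+1$ (resp.\ $r-1$), which is $\equiv 2\pmod 4$ exactly in the subcase where that exponent arises. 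This dovetailing makes your version arguably more transparent than the paper's congruence bookkeeping, and the $C_8$ case is handled the same way in both proofs (squares of elements of $G$ lie in $T$). Two small points to tidy up: (i) the appeal to the involution centralizer $C_T(w^2)\cong D_{q-1}$ is standard but not established in the paper; if you want to stay self-contained, the conjugacy of all cyclic subgroups of order $4$ in $T$ also follows from Lemma~\ref{lm:Sylow2PSL} and Sylow's theorem, since a dihedral $2$-group of order at least $8$ contains a unique $C_4$; (ii) when lifting from $\PSL_2(q)$ to $\SL_2(q)$, the conjugation relation only gives $\tilde t\tilde w\tilde t^{-1}=\pm\tilde w^{\pm 1}$, so you should add one line (comparing traces or eigenvalue multisets, using $\omega^2\neq -1$) to rule out the minus sign and justify that $\tilde t$ is genuinely diagonal or antidiagonal; similarly the target of the squaring equation is $\pm\mathrm{diag}(\omega^2,\omega^{-2})$, which your phrasing ``a primitive $4$th root of unity'' already accommodates.
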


\begin{proof}
    Let $\FF_q^\times=\la \zeta\ra$, $r=p^{f/2}$, $n=(q-1)_2$, $n'=(q-1)_{2'}$,
    \[
        x=\begin{pmatrix}
            \zeta^{n'} & 0\\
            0 & \zeta^{-n'}
        \end{pmatrix}^\psi \ \text{ and }\
        y=\begin{pmatrix}
            0 & 1\\
            -1 & 0
        \end{pmatrix}^\psi.
    \]
    Then it is straightforward to verify that 
    \[
        |x|=\frac{n}{2}, \ \ |y|=2,\ \ |\phi|=f,\ \ x^y=x^{-1}, \ \ x^\phi=x^p,\ \text{ and }\ y^\phi=y.
    \]
    Let $Q=\la x,y\ra\rtimes\la \phi^{f/2}\ra$. As $\la x,y\ra \cong D_n$, we have $\la x,y\ra\in \Syl_2(T)$ and $Q\in \Syl_2(G)$. 
    
    Suppose for a contradiction that there exists $K$ such that $H\leq K\leq G$ and  $K\cong C_8$ or $Q_8$. It follows that there exists $g\in G$ such that $H^g\leq K^g\leq Q$. Moreover, $H^g\cong H\cong C_4$ and $H^g\cap T=(H\cap T)^g\cong C_2$. Write $H^g=\la z\ra$ for some $z\in G$. Since $H^g\cap T<H^g$, we have $z\notin T$.

    If $K\cong C_8$, then $K^g=\la w\ra\cong C_8$ for some $w\in G$, and so $z\in\la w^2\ra\leq T$, contradicting that $z\notin T$. Therefore, $K\cong Q_8$. Write
    \[
        K^g=\la z, u\ra\cong Q_8 \ \text{ for some }\ u\in Q \ \text{ with }\ u^2=z^2 \ \text{ and }\ z^u=z^{-1}.
    \]
    Since $z,u\in Q=\la x,y\ra\rtimes\la\phi^{f/2}\ra$ and $z\notin T$, we have 
    \[
        z=x^iy^j\phi^{f/2} \ \text{ and }\ u=x^ky^\ell (\phi^{f/2})^s \ \text{ for some }\ i,k\in [n/2], \ \text{and}\  j,\ell,s\in [2].
    \]
    Note that $z^u=z^{-1}$ is equivalent to $zu=uz^{-1}$. By a straightforward calculation,
    \begin{align*}
        zu
        =x^iy^j\phi^{f/2}x^ky^\ell (\phi^{f/2})^s
        &=x^iy^j(x^ky^\ell)^{\phi^{f/2}}(\phi^{f/2})^{s+1}\\
        &=x^iy^jx^{kr}y^\ell(\phi^{f/2})^{s+1}
        =x^{i+(-1)^jkr}y^{j+\ell}(\phi^{f/2})^{s+1};
    \end{align*} 
    \begin{align*}
        uz^{-1}
        =x^ky^\ell (\phi^{f/2})^s\phi^{f/2}y^jx^{-i}
        &=x^ky^\ell(y^jx^{-i})^{(\phi^{f/2})^{s+1}}(\phi^{f/2})^{s+1}\\
        &=x^ky^\ell y^jx^{-ir^{s+1}}(\phi^{f/2})^{s+1}
        =x^{k-(-1)^{\ell+j}ir^{s+1}}y^{\ell+j}(\phi^{f/2})^{s+1}.
    \end{align*}
    These combined with $zu=uz^{-1}$ give that $i+(-1)^jkr\equiv k-(-1)^{\ell+j}ir^{s+1} \pmod{n/2}$, and so
    \begin{equation}\label{eq:ikmod}
        i(1+(-1)^{\ell+j}r^{s+1})\equiv k(1-(-1)^jr)\pmod{n/2}.
    \end{equation}
    A straightforward calculation gives that 
    \[
        z^2=(x^iy^j\phi^{f/2})^2=x^iy^j(x^iy^j)^{\phi^{f/2}}=x^iy^jx^{ir}y^j=x^{i+(-1)^jir}=x^{i(1+(-1)^jr)}.
    \]
    Similarly, $u^2=x^{k(1+(-1)^\ell r^s)}$.
    These together with $|z|=4=|u|$ imply that 
    \begin{equation}\label{eq:ik}
        i(1+(-1)^jr)\equiv \frac{n}{4} \equiv k(1+(-1)^\ell r^s) \pmod{ n/2 }.
    \end{equation}
    Noting that $n=(r+1)_2(r-1)_2=|(-1)^jr+1|_2|(-1)^jr-1|_2$, we obtain from~\eqref{eq:ik} that
    \begin{equation}\label{eq:irj}
        i_2= \frac{|(-1)^jr-1|_2}{4},
    \end{equation}
    and so $(-1)^jr\equiv 1\pmod{4}$.

    Assume first that $s=1$. Along the same line as the above paragraph, we derive from~\eqref{eq:ik} that $(-1)^\ell r\equiv 1\pmod{4}$, which together with $(-1)^jr\equiv 1\pmod{4}$ gives $(-1)^j=(-1)^\ell$. Then it follows from~\eqref{eq:ikmod} that $2i\equiv k(1-(-1)^jr)\pmod{n/2}$. This implies that $(1-(-1)^jr)_2$ divides $2i$, contradicting~\eqref{eq:irj}.

   Assume next that $s=0$. Then it follows from~\eqref{eq:ik} that $\ell=0$ and $k_2=n/8$. This together with~\eqref{eq:ik} and~\eqref{eq:ikmod} gives that 
   \[
     \frac{n}{4}\equiv i_2(1+(-1)^jr)_2\equiv k_2(1-(-1)^jr)_2= \frac{n}{4}\cdot\frac{(1-(-1)^jr)_2}{2}\pmod{n/2},
   \]
   which forces $(-1)^jr\equiv -1\pmod{4}$, contradicting the conclusion $(-1)^jr\equiv 1\pmod{4}$.
\end{proof}

\begin{lemma}\label{lm:Q8}
    Under Notation~$\ref{nota:PGaL}$, let $q\equiv 1\pmod{8}$, let $f$ be even, let $G=T.\la \delta\phi^k\ra$ for some integer $k$ such that $|\la \phi^k\ra|_2=2$, and let $H$ be a subgroup of $G$ such that $H\cong C_4$ and $H\cap T\cong C_2$. Then there exists $K\leq G$ such that $H\leq K$ and $K\cong Q_8$.
\end{lemma}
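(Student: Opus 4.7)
My plan parallels the computational style of Lemma~\ref{lm:C8Q8}, but the goal is now to construct $K$ rather than to rule it out.

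The first step is to reduce to the case $G=G_0:=T.\la\delta\phi^{f/2}\ra$. Since $|\la\phi^k\ra|_2=2$, the order $|\delta\phi^k|$ is even with $|\delta\phi^k|/2$ odd; a direct calculation in the cyclic group $\la\delta\phi^k\ra$ then shows that its unique involution is $\delta\phi^{f/2}$. Hence the Sylow $2$-subgroup of $\la\delta\phi^k\ra$ is $\la\delta\phi^{f/2}\ra$, so any Sylow $2$-subgroup of $G$ is contained in the overgroup $G_0$. Since $H$ is a $2$-group, after replacing $H$ by a conjugate we may assume $H\leq G_0$, so it suffices to find $K$ inside $G_0$.

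Next, I would set up an explicit Sylow $2$-subgroup of $G_0$ by extending the machinery of Lemma~\ref{lm:C8Q8}. With $r=p^{f/2}$, $n=(q-1)_2\geq 8$, $n'=(q-1)_{2'}$ and the elements $x,y$ as there, introduce the auxiliary matrix $\tilde X:=\begin{pmatrix}\zeta^{n'} & 0 \\ 0 & 1\end{pmatrix}^\psi$, which satisfies $\tilde X^2=x$ in $\PGL_2(q)$, and set $\tau_0:=\tilde X\phi^{f/2}\in\PGaL_2(q)$. A direct matrix computation yields the three key relations $x^{\tau_0}=x^r$, $y^{\tau_0}=yx^{-1}$, and $\tau_0^2=\tilde X^{r+1}=x^{(r+1)/2}$, so that $P:=\la x,y,\tau_0\ra$ is a Sylow $2$-subgroup of $G_0$ of order $2n$. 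After a further Sylow conjugation we may assume $H\leq P$, and then a generator $z$ of $H$ has the form $z=x^iy^j\tau_0$ for some $i\in\Z$ and $j\in\{0,1\}$, since $z\notin T$.

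The arithmetic heart of the argument is the observation that $r^2=q\equiv 1\pmod n$ together with $\{(r-1)_2,(r+1)_2\}=\{2,n/2\}$ forces $r\equiv\pm 1\pmod{n/2}$. I treat the two subcases in parallel. If $r\equiv 1\pmod{n/2}$, then $\tau_0$ centralizes $x$ and $\tau_0^2=x^{n/4+1}$; expanding $z^2=x^iy^j\cdot x^{ir}(yx^{-1})^j\cdot\tau_0^2$ and imposing $|z|=4$ rules out $j=0$ and yields $z^2=x^{n/4}$. If $r\equiv -1\pmod{n/2}$, then $\tau_0$ inverts $x$ and $\tau_0^2=x^{n/4}$; the same expansion now rules out $j=1$ and again yields $z^2=x^{n/4}$. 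In either subcase, $z^2$ equals the central involution $x^{n/4}$ of $\la x\ra$.

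Finally, I would take $u:=x^{n/8}\in T$, which has order $4$ and satisfies $u^2=x^{n/4}=z^2$. A short computation in each subcase (using the centralizing or inverting behaviour of $\tau_0$ on $x$) verifies $uzu^{-1}=z^{-1}$. Hence $K:=\la z,u\ra$ is generated by two order-$4$ elements sharing a common square with $u$ inverting $z$, so $K\cong Q_8$ and $H=\la z\ra\leq K$, as required. The main obstacle is arithmetic bookkeeping: one must choose $\tau_0$ carefully so that it normalizes $\la x,y\ra$ (hence the use of $\tilde X$ rather than a naive representative of $\delta$), and one must track the branching on $r\bmod(n/2)$. The uniform success of the choice $u=x^{n/8}$ reflects the structural fact (implicitly established by the above computations) that $P$ is semidihedral of order $2n$, whose unique $Q_8$ subgroup absorbs every cyclic subgroup of order $4$ in $P$ that meets $T$ in $C_2$.
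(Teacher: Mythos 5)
Your proposal is correct and takes essentially the same route as the paper's proof: it builds the same explicit Sylow $2$-subgroup $\la x,y\ra\la \tilde{X}\phi^{f/2}\ra$ (the paper's $z\phi^{f/2}$ is your $\tau_0$), writes a generator of a conjugate of $H$ as $x^iy^j\tau_0$, computes its square, and takes $u=x^{n/8}$ to produce the $Q_8$; the only difference is that you split on $r\equiv\pm1\pmod{n/2}$ and pin down $j$, whereas the paper handles both $j$ uniformly via the valuation identity $|1+(-1)^jr|_2=n/2$. One cosmetic remark: with the paper's convention $a^b=b^{-1}ab$ the relation is $y^{\tau_0}=yx^{r}$ (your $yx^{-1}$ is the left conjugate $\tau_0y\tau_0^{-1}$), but since your expansion of $z^2$ uses precisely the left conjugates, the computation is internally consistent and the conclusions are unaffected.
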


\begin{proof}
    Construct a Sylow $2$-subgroup of $G$ as follows. Let $\FF_q^\times=\la \zeta\ra$, $n=(q-1)_2$, $n'=(q-1)_{2'}$, $r=p^{f/2}$,
    \[
        x=\begin{pmatrix}
            \zeta^{n'} & 0\\
            0 & \zeta^{-n'}
        \end{pmatrix}^\psi, \ \ 
        y=\begin{pmatrix}
            0 & 1\\
            -1 & 0
        \end{pmatrix}^\psi \ \text{ and }\
        z=\begin{pmatrix}
            \zeta^{n'} & 0\\
            0 & 1
        \end{pmatrix}^\psi.
    \]
    A straightforward calculation gives that 
    \begin{equation}\label{eq:xyz}
        |x|=\frac{n}{2}, \ \ |y|=2,\ \ x^y=x^{-1}, \ \ x^{z\phi^{f/2}}=x^r,\ \text{ and }\ y^{z\phi^{f/2}}=(x^{-1}y)^{\phi^{f/2}}=x^{-r}y.
    \end{equation}
    Moreover, we have 
    \begin{equation}\label{eq:zphi}
        (z\phi^{f/2})^2=zz^{\phi^{f/2}}=z^{1+r}=\Bigg((\zeta^{n'})^{\frac{1+r}{2}}
        \begin{pmatrix}
            (\zeta^{n'})^{\frac{1+r}{2}} & 0\\
            0 & (\zeta^{n'})^{-\frac{1+r}{2}}
        \end{pmatrix}\Bigg)^\psi=x^{\frac{1+r}{2}}.
    \end{equation}
    Let $Q=\la x,y\ra\la z\phi^{f/2}\ra$. Then $Q=\la x,y\ra.\la \delta\phi^{f/2}\ra\cong D_n.C_2$, and so $|Q|=|G|_2$. Noting from $|\la \phi^k\ra|_2=2$ that $\delta\phi^{f/2}\in \la \delta\phi^k \ra$, we have $Q\leq G$, and so $Q\in \Syl_2(G)$. 
    
    Since $H\cong C_4$, there exists $g\in G$ such that $H^g\leq Q$. Then $H^g\cap T=(H\cap T)^g\cong C_2$. Write $H^g=\la w\ra$. Then $|w|=4$ and $w\notin Q\cap T=\la x,y\ra$. Hence, 
    \[
        w=x^iy^jz\phi^{f/2} \ \text{ for some }\ i\in [n/2] \ \text{ and }\ j\in [2]. 
    \]
    In view of~\eqref{eq:xyz} and~\eqref{eq:zphi}, it follows that 
    \[
        w^2
        =(x^iy^jz\phi^{f/2})^2
        =x^iy^j(z\phi^{f/2})^2(x^iy^j)^{z\phi^{f/2}}
        =x^iy^jx^{\frac{1+r}{2}}x^{ri}(x^{-r}y)^j
        =x^{i+(-1)^j(\frac{1+r}{2}+r(i-j))}.
    \]
    This combined with $|w|=4$ implies that 
    \begin{equation}\label{eq:ijr}
        \Big|i+(-1)^j\Big(\frac{1+r}{2}+r(i-j)\Big)\Big|_2=\frac{n}{4}.
    \end{equation}
    Moreover, it holds for both $j=0$ and $j=1$  that
    \[
        \Big|i+(-1)^j\Big(\frac{1+r}{2}+r(i-j)\Big)\Big|_2
        =\frac{|1+(-1)^jr|_2}{2}.
    \]
    This combined with~\eqref{eq:ijr} yields 
    \begin{equation}\label{eq:jrmod4}
       |1+(-1)^jr|_2=\frac{n}{2}.
    \end{equation}

    Let $u=x^{n/8}$. We complete the proof by showing that $H^g\leq \la u,w\ra\cong Q_8$. Since $|u|=4=|w|$, we are left to show that $u^w=u^{-1}$. In fact, by~\eqref{eq:xyz} and~\eqref{eq:jrmod4},
    \[
        u^w=(x^{\frac{n}{8}})^{x^iy^jz\phi^{f/2}}=x^{\frac{n}{8}(-1)^jr}=x^{-\frac{n}{8}}x^{\frac{n}{8}(1+(-1)^jr)}=x^{-\frac{n}{8}}=u^{-1},
    \]
    as desired.
\end{proof}

% \begin{lemma}\label{lm:Huppert}
%     Let $G$ be a group with $M\leq G$ and $N\trianglelefteq G$ such that $G=MN$, and let $M_1\in \Syl_p(M)$ and $G_1\in \Syl_p(G)$ for some prime $p$ such that $M_1\leq G_1$. Then there exists $N_1\in \Syl_p(N)$ such that $G_1=M_1N_1$.
% \end{lemma}

% \begin{proof}
%     Let $H$ be any Sylow $p$-subgroup of $N$. Then there exists $g\in G$ such that $H\leq g^{-1}G_1g$. Let $N_1=gHg^{-1}$. Since $N\trianglelefteq G$, we have $N_1=gHg^{-1}\leq gNg^{-1}=N$, and so $N_1\in \Syl_p(N)$. Moreover, $N_1=gHg^{-1}\leq G_1$, and so $M_1N_1\subseteq G_1$. Note that $M_1\cap N_1\leq M\cap N$, and so
%     \[
%         |M_1N_1|=\frac{|M_1||N_1|}{|M_1\cap N_1|}\geq \frac{|M_1||N_1|}{|M\cap N|_p}=\Big(\frac{|M||N|}{|M\cap N|}\Big)_p=|MN|_p=|G|_p=|G_1|.
%     \]
%     Thus, we conclude that $G_1=M_1N_1$, as desired.
% \end{proof}

\section{A new criterion for subgroup perfect codes}\label{sec:3}

In this section, we provide a new criterion for determining when a subgroup is a perfect code. Applying this criterion, we then derive necessary and sufficient conditions for a subgroup with certain structures to be a perfect code.

\begin{theorem}\label{thm:pciff}
    Let $G$ be a group, and let $H$ be a $2$-subgroup of $G$. Then $H$ is a perfect code of $G$ if and only if, for each $a\in \NN_G(H)\setminus H$ with $a^2\in H$, the subgroup $H$ has a complement in $H\la a\ra$.
\end{theorem}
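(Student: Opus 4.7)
The plan is to deduce this new criterion directly from the equivalence \eqref{enu:pc1}$\Leftrightarrow$\eqref{enu:pc3} of Theorem~\ref{thm:1}, exploiting the hypothesis that $H$ is a $2$-group to simplify each of the three constraints appearing in \eqref{enu:pc3}: ``$a^2\in H$'', ``$|H|/|H\cap H^a|$ odd'', and ``$\exists b\in aH$ with $b^2=e$''.

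\medskip

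First I would handle the index condition. Since $H$ is a $2$-group, $|H|/|H\cap H^a|$ is a power of $2$, so the condition that it be odd is equivalent to $H=H\cap H^a$, and hence (by orders) to $H=H^a$, i.e.\ $a\in\NN_G(H)$. Thus Theorem~\ref{thm:1}\eqref{enu:pc3} reduces, in the $2$-group case, to the statement that for every $a\in\NN_G(H)$ with $a^2\in H$ there exists $b\in aH$ with $b^2=e$. The case $a\in H$ is trivial (take $b=e$), so one may further restrict attention to $a\in\NN_G(H)\setminus H$, matching the quantifier in the statement.

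\medskip

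Next I would translate the existence of an involution in $aH$ into the existence of a complement of $H$ in $H\la a\ra$. For $a\in\NN_G(H)\setminus H$ with $a^2\in H$, the subset $H\la a\ra=\la a\ra H$ is a subgroup containing $H$ as a normal subgroup, and the cyclic quotient $H\la a\ra/H$ is generated by the coset $aH$ of order $2$ (since $a\notin H$ but $a^2\in H$). A complement of $H$ in $H\la a\ra$ is therefore precisely a subgroup of order $2$ meeting $H$ trivially, i.e.\ a subgroup $\la b\ra$ with $b$ an involution lying in $H\la a\ra\setminus H=aH$. Conversely, any $b\in aH$ with $b^2=e$ is necessarily non-identity (since $e\notin aH$), so $\la b\ra$ is such a complement. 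This establishes the required equivalence term by term.

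\medskip

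I do not anticipate any substantial obstacle: the result is essentially a repackaging of Theorem~\ref{thm:1}\eqref{enu:pc3} under the $2$-group hypothesis, and every step above is a direct equivalence. The only point that needs a moment of care is confirming that, because $H\la a\ra/H$ has order exactly $2$ (which uses both $a\notin H$ and $a^2\in H$), a complement is forced to be generated by a single involution in $aH$; once this identification is made, the biconditional drops out immediately.
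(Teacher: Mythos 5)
Your proposal is correct and follows essentially the same route as the paper: both deduce the criterion from Theorem~\ref{thm:1}\eqref{enu:pc1}$\Leftrightarrow$\eqref{enu:pc3}, using the $2$-group hypothesis to reduce the odd-index condition to $a\in\NN_G(H)$ and identifying involutions in $aH$ with order-$2$ complements of $H$ in $H\la a\ra$ (the index-$2$ observation you flag is exactly the point the paper also relies on). The only cosmetic difference is that you phrase it as one chain of equivalences rather than two separate implications.
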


\begin{proof}
    Assume that $H$ is a perfect code of $G$. Take an arbitrary $a\in \NN_G(H)\setminus H$ with $a^2\in H$. It follows that $|H|/|H\cap H^a|=1$ is odd. Thus, we derive from Theorem~\ref{thm:1} that there exists $b\in aH=Ha$ such that $b^2=e$, which means that $\la b\ra$ is a complement of $H$ in $H\la a\ra$.

    Conversely, assume that, for each $a\in \NN_G(H)\setminus H$ with $a^2\in H$, the subgroup $H$ has a complement in $H\la a\ra$. Take an arbitrary $a\in G$ such that $a^2\in H$ and $|H|/|H\cap H^a|$ is odd. Then, since $H$ is a $2$-group, we deduce that $|H|/|H\cap H^a|=1$, which means $a\in \NN_G(H)$. If $a\in H$, then $aH=H$ obviously contains an involution. If $a\in \NN_G(H)\setminus H$, then our assumption gives that $H$ has a complement, say, $\la b\ra$ in $H\la a\ra$. In this case, $b\in aH$ and $b$ is an involution. This completes the proof by Theorem~\ref{thm:1}.
\end{proof}

The above theorem reveals that whether a subgroup $H$ is a perfect code of a group $G$ depends on whether $H$ admits ``local complements" of order $2$. This suggests that the possible structures of $H.C_2$ play an important role in determining subgroup perfect codes.

Note that as Lemma~\ref{lm:Z2023} shows, to determine the subgroup perfect codes of a group, it suffices to determine which $2$-subgroups can occur as perfect codes. The following lemma provides a necessary and sufficient condition for a cyclic $2$-group to be a perfect code of any overgroup $G$.

\begin{lemma}\label{lm:cyclic}
    Let $G$ be a group, and let $H$ be a non-trivial cyclic $2$-subgroup of $G$. Then $H$ is a perfect code of $G$ if and only if $H$ is not contained in any subgroup of $K$ of $G$ such that $|K|/|H|=2$ and $K$ is either cyclic or generalized quaternion.
\end{lemma}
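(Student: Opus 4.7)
The plan is to obtain this as a direct consequence of Theorem~\ref{thm:pciff} combined with the structural fact recorded in Lemma~\ref{lm:2group}. By Theorem~\ref{thm:pciff}, $H$ is a perfect code of $G$ if and only if, for every $a\in \NN_G(H)\setminus H$ with $a^2\in H$, the subgroup $H$ admits a complement in $K:=H\la a\ra$. Since $H$ is a $2$-subgroup and $a^2\in H$, the subgroup $K$ is itself a $2$-group, and the conditions $a\notin H$ and $a^2\in H$ together force $|K|/|H|=2$. Note also that $H$ has index $2$ in $K$ and hence is normal in $K$.

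Next I would translate the complement condition into a statement about involutions. Any complement of $H$ in $K$ must have order $2$ and is therefore generated by an involution of $K\setminus H$. Since $H$ is cyclic of $2$-power order, it contains exactly one involution, so such a complement exists if and only if $K$ contains at least two involutions. Applying Lemma~\ref{lm:2group} to the $2$-group $K$, this in turn is equivalent to $K$ being neither cyclic nor generalized quaternion.

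It then remains to assemble the two directions. For the forward implication (contrapositive), suppose $H\leq K\leq G$ with $|K|/|H|=2$ and $K$ cyclic or generalized quaternion. Normality of $H$ in $K$ lets us pick any $a\in K\setminus H$ and obtain $a\in \NN_G(H)\setminus H$ with $a^2\in H$ and $K=H\la a\ra$; by the previous paragraph $H$ has no complement in $K$, so Theorem~\ref{thm:pciff} forbids $H$ from being a perfect code. For the converse, if $H$ is not a perfect code then Theorem~\ref{thm:pciff} supplies some $a\in \NN_G(H)\setminus H$ with $a^2\in H$ such that $H$ admits no complement in $K:=H\la a\ra$, and the analysis above yields that $K$ is cyclic or generalized quaternion with $|K|/|H|=2$, as required. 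I do not anticipate any real obstacle here: all the substantive work has already been done in Theorem~\ref{thm:pciff} and Lemma~\ref{lm:2group}, and the only point requiring mild attention is the boundary case $|H|=2$, where under the standard convention no generalized quaternion group of order $4$ exists, but the relevant non-split overgroup $C_4$ is cyclic and the split overgroup $C_2\times C_2$ is not, so the stated equivalence remains valid in that case as well.
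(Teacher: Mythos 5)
Your proposal is correct and follows essentially the same route as the paper's own proof: apply Theorem~\ref{thm:pciff}, observe that $H\la a\ra$ is a $2$-group of index $2$ over $H$, and use the unique involution of $H$ together with Lemma~\ref{lm:2group} to translate the complement condition into $H\la a\ra$ being neither cyclic nor generalized quaternion. Your additional remarks (normality of index-$2$ subgroups to pass between overgroups $K$ and elements $a$, and the $|H|=2$ boundary case) merely make explicit what the paper leaves implicit.
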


\begin{proof}
    Take an arbitrary $a\in \NN_G(H)\setminus H$ with $a^2\in H$. It is clear that $|H\la a\ra|/|H|=2$, and so $H\la a\ra$ is a $2$-group. Since $H$ contains a unique involution, it follows that $H$ has a complement in $H\la a\ra$ if and only if $H\la a\ra$ contains at least two involutions, which, by Lemma~\ref{lm:2group}, is equivalent to $H\la a\ra$ being neither cyclic nor generalized quaternion. This together with Theorem~\ref{thm:pciff} completes the proof.
\end{proof}

Using similar arguments, we give a necessary and sufficient condition for a generalized quaternion $2$-subgroup to be a perfect code as follows.

\begin{lemma}\label{lm:quaternion}
    Let $G$ be a group, and let $H$ be a generalized quaternion $2$-subgroup of $G$. Then $H$ is a perfect code of $G$
    if and only if $H$ is a maximal generalized quaternion subgroup of $G$.
\end{lemma}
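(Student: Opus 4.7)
The plan is to read Theorem~\ref{thm:pciff} together with Lemma~\ref{lm:2group} to reduce the perfect-code condition on the generalized quaternion 2-group $H$ to a purely structural condition on overgroups of $H$ of index $2$. Since $H$ is a 2-subgroup, Theorem~\ref{thm:pciff} says $H$ is a perfect code of $G$ iff for every $a\in\NN_G(H)\setminus H$ with $a^2\in H$, $H$ admits a complement in $H\la a\ra$. Any such complement in the 2-group $H\la a\ra$ (which has order $2|H|$) must be generated by an involution lying outside $H$. Since $H$ is generalized quaternion, its unique involution is the \emph{only} involution of $H$, so the existence of a complement is equivalent to $H\la a\ra$ possessing more than one involution. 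By Lemma~\ref{lm:2group}, this holds iff $H\la a\ra$ is neither cyclic nor generalized quaternion; and $H\la a\ra$ cannot be cyclic because it contains the non-cyclic subgroup $H$. Hence the whole lemma reduces to the equivalent statement: \emph{$H$ is a perfect code of $G$ iff there is no $a\in\NN_G(H)\setminus H$ with $a^2\in H$ such that $H\la a\ra$ is generalized quaternion}.

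For the forward direction, I will assume $H$ is a maximal generalized quaternion subgroup of $G$. Then for any admissible $a$, $H\la a\ra$ strictly contains $H$, so by maximality $H\la a\ra$ cannot be generalized quaternion, and the required complement exists; Theorem~\ref{thm:pciff} yields that $H$ is a perfect code.

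For the converse, I will assume $H$ is not maximal as a generalized quaternion subgroup, so there exists a generalized quaternion $K\leq G$ with $H<K$. The key step is to exhibit a specific $a$ inside $K$. Since $K$ is a 2-group and $H<K$, standard $p$-group theory gives $H<\NN_K(H)$, and the nontrivial 2-group $\NN_K(H)/H$ contains an element of order $2$, say $aH$. Then $a\in\NN_G(H)\setminus H$, $a^2\in H$, and $H\la a\ra$ is a 2-subgroup of $K$ of order $2|H|$. The decisive observation is that, because $K$ is generalized quaternion and hence contains a unique involution, so does $H\la a\ra$; by Lemma~\ref{lm:2group} $H\la a\ra$ is cyclic or generalized quaternion, and the cyclic option is excluded by the presence of $H$. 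Therefore $H\la a\ra$ is generalized quaternion, $H$ has no complement in $H\la a\ra$, and Theorem~\ref{thm:pciff} shows $H$ is not a perfect code of $G$.

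The only step that is not purely formal is the production of $a\in\NN_K(H)\setminus H$ with $a^2\in H$ in the converse; but the $p$-group normalizer growth argument makes this automatic, so I expect no real obstacle.
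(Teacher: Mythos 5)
Your proof is correct and follows essentially the same route as the paper: it combines Theorem~\ref{thm:pciff} with Lemma~\ref{lm:2group} and reduces everything to whether $H\la a\ra$ contains an involution outside $H$. The only difference is that you spell out the construction of the element $a$ inside a strictly larger generalized quaternion subgroup $K$ (via normalizer growth in the $2$-group $K$) in the converse direction, a detail the paper's proof leaves implicit.
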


\begin{proof}
    Take an arbitrary $a\in \NN_G(H)\setminus H$ with $a^2\in H$. Since $H$ is generalized quaternion, $H$ has a unique involution. Thus, $H$ has a complement in $H\la a\ra$ if and only if $H\la a\ra$ contains at least two involutions, which, by Lemma~\ref{lm:2group}, is equivalent to $H$ being a maximal generalized quaternion subgroup of $G$. This combined with Theorem~\ref{thm:pciff} completes the proof.
\end{proof}

Let $H$ be a group of the form $C_{2^n}\rtimes C_2$, where $n\geq 2$ is an integer. Then there exist $x,y\in H$ such that $|x|=2^n$, $|y|=2$ and $H=\la x\ra \rtimes \la y\ra$. It follows that $x^y=x^k$ for some $k\in \Z$. We deduce from  $|y|=2$ that $x=x^{y^2}=(x^k)^y=x^{k^2}$. Thus, $k^2\equiv 1\pmod{2^n}$, and so
    \begin{equation}\label{eq:k}
        x^y=x^k\ \text{ with }\ k\equiv \pm 1\text{ or}\, \pm 1+2^{n-1} \pmod{2^n}.
    \end{equation}

\begin{lemma}\label{lm:C2nrtimesC2}
    Let $G$ be a group, let $H\leq G$ such that $H\cong C_{2^n}\rtimes C_2$ for some integer $n\geq 2$, and let $a\in \NN_G(H)\setminus H$ with $a^2\in H$. Write $H=\la x\ra \rtimes \la y\ra$ with elements $x$ and $y$ of orders $2^n$ and $2$ respectively such that~\eqref{eq:k} holds. Then $H$ has a complement in $H\la a\ra$ if and only if all the following conditions hold:
    \begin{enumerate}[\rm(a)]
        \item $a^2\in \la x\ra$;
        \item if $\la x, a\ra\cong C_{2^{n+1}}$, then $H\cong D_{2^{n+1}}$ and $a^y=a^{-1}$;
        \item \label{enu:cH} if $H\cong C_{2^n}\times C_2$ and $\la x, a\ra\cong Q_{2^{n+1}}$, then $[a,y]=a^2$;
        \item \label{enu:dH} if $H\cong D_{2^{n+1}}$ and $\la x, a\ra\cong Q_{2^{n+1}}$, then $[a,y]\in \la x^2 \ra$;
    \end{enumerate}
\end{lemma}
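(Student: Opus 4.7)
The plan is to convert the complement problem into an involution-finding problem: since $a^2\in H$ and $a\notin H$, we have $H\la a\ra=H\sqcup aH$, so a complement of $H$ in $H\la a\ra$ is exactly a subgroup $\la b\ra$ of order $2$ with $b\in aH$. The lemma then reduces to characterising when $aH$ contains an involution, to be analysed by the isomorphism types of $H$ and $\la x,a\ra$.

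For the necessity of~(a), one uses the characteristic nature of $\la x\ra$ in $H$ to see that every element of $\NN_G(H)$ acts trivially on $H/\la x\ra\cong C_2$; then expanding $b^2=a^2\cdot h^a\cdot h=e$ for an involution $b=ah\in aH$ yields $a^2=h^{-1}(h^{-1})^a$, which lies in $\la x\ra$ because $h^{-1}(h^{-1})^a\equiv h^{-2}\equiv e\pmod{\la x\ra}$. Under~(a), $\la x,a\ra$ has order $2^{n+1}$ with $\la x\ra$ as a cyclic maximal subgroup, so it is one of $C_{2^{n+1}}$, $C_{2^n}\times C_2$, $D_{2^{n+1}}$, $Q_{2^{n+1}}$, or the semidihedral or modular maximal-cyclic group of order $2^{n+1}$. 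If $\la x,a\ra$ is neither $C_{2^{n+1}}$ nor $Q_{2^{n+1}}$, then Lemma~\ref{lm:2group} provides an involution in $\la x,a\ra\setminus\la x\ra\subseteq a\la x\ra\subseteq aH$, so a complement exists and~(b)--(d) are either vacuous or trivially satisfied.

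The two critical subcases are $\la x,a\ra\cong C_{2^{n+1}}$ and $\la x,a\ra\cong Q_{2^{n+1}}$, where the only involution of $\la x,a\ra$ lies in $\la x\ra$ and one must hunt for involutions of the form $b=ax^iy$ in $a\la x\ra y$. In the cyclic case, $\la x,a\ra=\la a\ra$; writing $a^y=a^m$ with $m^2\equiv 1\pmod{2^{n+1}}$ yields $b^2=a^{1+m}x^{i(1+k)}$, and a parity check across the four candidates $m\in\{\pm 1,\pm 1+2^n\}$ (which forces $k=m\bmod 2^n\in\{\pm 1\}$) shows solvability in~$i$ exactly when $m=k=-1$, giving~(b). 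In the quaternion case $a^y\notin\la x\ra$ (otherwise $a\in\la x\ra\le H$), so $a^y=x^j a$ for a unique $j$; using $x^a=x^{-1}$ and $a^2=x^{2^{n-1}}$ one computes $b^2=x^{i(k-1)+2^{n-1}-j}$, while $(a^y)^y=a$ forces $j(k+1)\equiv 0\pmod{2^n}$. Combining these with the solvability condition $j\equiv i(k-1)+2^{n-1}\pmod{2^n}$ for some $i$ separates the four values of $k$: $k=1$ requires $j=2^{n-1}$, equivalent to $[a,y]=a^2$ (condition~(c)); $k=-1$ requires $j$ even, equivalent to $[a,y]\in\la x^2\ra$ (condition~(d)); and for the remaining $k$-values (i.e.\ $H$ semidihedral or modular maximal-cyclic) the $y$-constraint already implies solvability, so no additional hypothesis is needed and~(c),~(d) are vacuous. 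The main technical hurdle is the congruence analysis in the quaternion subcase, especially verifying that no extra hypothesis is required when $H$ is semidihedral or modular maximal-cyclic.
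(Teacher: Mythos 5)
Your overall strategy is the same as the paper's: reduce the existence of a complement to the existence of an involution in $aH$, and, once $a^2\in\la x\ra$ and $\la x,a\ra$ has order $2^{n+1}$, split according to whether $\la x,a\ra$ is cyclic, generalized quaternion, or neither; your congruence analysis in the cyclic and quaternion subcases reaches exactly the conclusions of the paper's Subcases 2.2 and 2.3 (your $j$ playing the role of the paper's $t$, your final criteria matching its Tables~\ref{tab:kit2} and~\ref{tab:kit}). The genuine gap is your appeal to ``the characteristic nature of $\la x\ra$ in $H$''. This is false precisely when $H\cong C_{2^n}\times C_2$ or $H$ is modular maximal-cyclic: there $\la xy\ra$ is a second cyclic subgroup of order $2^n$, and $x\mapsto xy$, $y\mapsto y$ is an automorphism of $H$ moving $\la x\ra$. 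Hence an element $a\in\NN_G(H)\setminus H$ with $a^2\in H$ need not normalize $\la x\ra$, and then both of your key steps collapse: the ``trivial action on $H/\la x\ra$'' computation behind the necessity of (a), and the claim that under (a) the subgroup $\la x,a\ra$ has order $2^{n+1}$ (it can be all of $H\la a\ra$, of order $2^{n+2}$, and then declaring (b)--(d) ``vacuous or trivially satisfied'' produces no involution in $aH$). This untreated configuration really occurs and really misbehaves: in $G=\la u,v\mid u^4=v^4=e,\ u^v=u^{-1}\ra\cong C_4\rtimes C_4$, take $H=\la v\ra\times\la u^2\ra\cong C_4\times C_2$ with $x=v$, $y=u^2$, and $a=uv$; then $a^2=v^2\in\la x\ra$, $x^a=xy$, $\la x,a\ra=G$, yet the only involutions of $G$ are $u^2$, $v^2$, $u^2v^2$, all in $H$, so $aH$ contains no involution and $H$ has no complement in $H\la a\ra$ even though (a) holds and (b)--(d) are vacuous. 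So the missing case cannot simply be waved through.

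For comparison: the paper's own proof quietly makes the same assumption --- in its Case~2 it asserts $\la x,a\ra=\la x\ra\la a\ra$ of order $2^{n+1}$, and in Case~1 it forms the quotient modulo $\la x\ra$ --- both of which presuppose $\la x\ra^a=\la x\ra$; this is automatic when $H$ is dihedral or semidihedral (where $\la x\ra$ is characteristic), which are the only cases used later in the paper (Corollaries~\ref{cor:Da^2inQ}--\ref{cor:dihedral} and Theorem~\ref{thm:wr}). So apart from your slicker route to the necessity of (a) (the identity $b^2=a^2h^ah$ read modulo $\la x\ra$, versus the paper's quotient-of-a-semidirect-product contradiction in its Case~1), your argument coincides with the paper's and inherits the same blind spot; but because you state the characteristicity explicitly, in your write-up it is an identifiable false step. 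To repair your proof you would have to either restrict to the situation $\la x\ra^a=\la x\ra$ (equivalently $|\la x,a\ra|=2^{n+1}$) or analyze separately the abelian and modular types of $H$, where $a$ may interchange $\la x\ra$ and $\la xy\ra$.
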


\begin{proof}
    Let $K=H\la a\ra$ and $Q=\la x,a\ra$. We examine whether $H$ has a complement in $K$ in each of the following cases, from which the conclusion of the lemma follows.

    \textsf{Case 1:} $a^2\notin \la x\ra$. Then $a^2\in H\setminus\la x\ra =\la x\ra y$, and so $\la x\ra \la a\ra=H\la a\ra$. We prove that $H$ does not have a complement in $K$ in this case. Suppose for a contradiction that $H$ has a complement in $K$. Then $K=H\rtimes\la z\ra$ for some involution $z\in K$ since $|K|=2 |H|$. It follows that
    \[
        (\la x\ra \la a\ra)/\la x\ra= (H\la a\ra)/\la x\ra=(H\rtimes\la z\ra)/\la x\ra=(H/\la x\ra)\rtimes(\la z\ra \la x\ra /\la x\ra)\cong C_2\rtimes C_2=C_2^2.
    \]
    However, $ (\la x\ra \la a\ra)/\la x\ra$ is cyclic, a contradiction.

    \textsf{Case 2:} $a^2 \in \la x\ra$. In this case, $Q=\la x\ra \la a\ra$, $|K|=2|H|=2|Q|$, and $K=Q\la y \ra$. By Lemma~\ref{lm:2group}, there are precisely three subcases as follows.

    \textsf{Subcase 2.1:} $Q$ contains at least two involutions. Since $\la x\ra$ contains a unique involution, there exists an involution $w\in \la x\ra a\subseteq Ha$. Then we conclude from $|K|=2|H|$ that $\la w\ra$ is a complement of $H$ in $K$.

    \textsf{Subcase 2.2:} $Q$ is cyclic. Then $Q=\la x,a\ra=\la a\ra$ and  $|a|=2^{n+1}$. Write
    \[
        x=a^s\ \text{ for some even }\, s\in \Z.
    \]
    Since $y$ normalizes $Q$, we have
    \[
        a^y=a^\ell\ \text{ for some }\, \ell \in \Z.
    \]
    It follows from $y^2=e$ that $a=a^{y^2}=a^{\ell^2}$, which implies that $\ell^2\equiv 1\pmod{2^{n+1}}$, and so
    \[
        \ell\equiv \pm1\text{ or } \pm1+2^n \pmod{2^{n+1}}.
    \]
    Recall from~\eqref{eq:k} that $x^y=x^k$ for some $k\equiv \pm 1\text{ or}\, \pm 1+2^{n-1} \pmod{2^n}$. Since
    \[
        x^k=x^y=(a^s)^y=a^{\ell s}=x^\ell,
    \]
    we obtain that $k\equiv \ell \pmod{2^n}$, which leads to
    \[
        \begin{cases}
            k\equiv 1\pmod{2^n}\\
            \ell\equiv 1\text{ or } 1+2^n\pmod{2^{n+1}}
        \end{cases}
        \text{ or }\
        \begin{cases}
            k\equiv -1\pmod{2^n}\\
            \ell\equiv -1\text{ or } -1+2^n\pmod{2^{n+1}}.
        \end{cases}
    \]
    Recalling that $s$ is even, we have
    \[
        (x^ia)^2=x^i a x^i a=x^{2i}a^2=a^{2+2is}=a^{2(1+is)}\neq 1.
    \]
    Moreover, since
    \[
        (x^iya)^2=x^iyax^iya=x^ia^y(x^y)^ia=x^ia^\ell x^{ki} a=a^{s(k+1)i+\ell+1},
    \]
    we derive that $(x^iya)^2=e$ for some $i$ if and only if $k\equiv -1\pmod{2^n}$ and $\ell\equiv -1\pmod{2^{n+1}}$. Therefore, in this subcase, $H$ has a complement in $K$ if and only if $x^y=x^{-1}$ and $a^y=a^{-1}$.

    \textsf{Subcase 2.3:} $Q$ is generalized quaternion. Then since $|Q|=|\la x\ra \la a\ra|=2^{n+1}$, we have $Q\cong Q_{2^{n+1}}$. Hence, from the structure of $Q_{2^{n+1}}$ we know that $\langle x\rangle$ is the unique cyclic subgroup of $Q$ of order $2^n$, and
    \[
        x^a=x^{-1}\ \text{ and }\ a^2=x^{2^{n-1}}.
    \]
    Since $a$ normalizes $H=\la x\ra \cup \la x\ra y$, we have
    \[
        y^a=x^ty \ \text{ for some }\, t\in \Z.
    \]
    It follows from $y^2=e$ that
    \[
    e=(y^2)^a=(y^a)^2=(x^t y)^2=x^t (x^t)^y=x^{t+tk}=x^{t(k+1)},
    \]
    which implies
    \begin{equation}\label{eq:tk}
        t(k+1)\equiv 0\pmod{2^n}.
    \end{equation}
    Note that, for each $i\in \Z$,
    \begin{align*}
        &(x^ia)^2=x^iax^ia=x^ia^2(x^i)^a=x^ix^{2^{n-1}}x^{-i}=x^{2^{n-1}}\neq e,\\
        &
        (x^iya)^2
        =x^iya^2(x^iy)^a
        =x^iyx^{2^{n-1}}x^{-i}x^ty
        =x^i(x^{2^{n-1}-i+t})^y
        =x^{i+(2^{n-1}-i+t)k}
        =x^{(1-k)i+(2^{n-1}+t)k}.
    \end{align*}
    Thus, $Ha$ contains an involution if and only if
    \begin{equation}\label{eq:ki}
        \text{there exists $i\in \Z$ such that } (1-k)i+(2^{n-1}+t)k\equiv 0\pmod{2^n}.
    \end{equation}
    As $k\equiv \pm 1\pmod{4}$ when $n=2$ and $k\equiv \pm 1\text{ or}\, \pm 1+2^{n-1} \pmod{2^n}$ when $n\geq 3$, we obtain the consequence of~\eqref{eq:tk} on $t$ in the following Tables~\ref{tab:kit2} and~\ref{tab:kit} for each possibility of $k$. Then the equivalent condition for~\eqref{eq:ki} can be derived in each row of the tables.
\begin{table}[h]
    \centering
    \renewcommand{\arraystretch}{1.5}
    \begin{tabular}{c|c|c}
    % Thick top rule
    \noalign{\hrule height 1.2pt}
    Possibility for $k$ & Consequence of~\eqref{eq:tk}  & Equivalent condition for~\eqref{eq:ki} \\
    % Thick rule after header
    \noalign{\hrule height 1.2pt}
    $k\equiv 1\pmod{4}$ & $t\text{ even }$ & $t\equiv 2\pmod{4}$\\
    \hline
    $k\equiv-1\pmod{4}$ & -- & $t$ even \\
    % Thick bottom rule
    \noalign{\hrule height 1.2pt}
    \end{tabular}
    \caption{$k$ and $t$ for Subcase~2.3 in the proof of Lemma~\ref{lm:C2nrtimesC2} when $n=2$.}
    \label{tab:kit2}
\end{table}
\begin{table}[h]
    \centering
    \renewcommand{\arraystretch}{1.5}
    \begin{tabular}{c|c|c}
    % Thick top rule
    \noalign{\hrule height 1.2pt}
    Possibility for $k$ & Consequence of~\eqref{eq:tk}  & Equivalent condition for~\eqref{eq:ki} \\
    % Thick rule after header
    \noalign{\hrule height 1.2pt}
    $k\equiv 1\pmod{2^n}$ & $t\equiv 0\text{ or } 2^{n-1}\pmod{2^n}$ & $t\equiv 2^{n-1}\pmod{2^n}$\\
    \hline
    $k\equiv 1+2^{n-1}\pmod{2^n}$ & $t\equiv 0\text{ or } 2^{n-1}\pmod{2^n}$ & $t\equiv 0\text{ or } 2^{n-1}\pmod{2^n}$ \\
    \hline
    $k\equiv-1\pmod{2^n}$ & -- & $t$ even \\
    \hline
    $k\equiv -1+2^{n-1}\pmod{2^n}$ & $t$  even & $t\text{ even}$ \\
    % Thick bottom rule
    \noalign{\hrule height 1.2pt}
    \end{tabular}
    \caption{$k$ and $t$ for Subcase~2.3 in the proof of Lemma~\ref{lm:C2nrtimesC2} when $n\geq 3$.}
    \label{tab:kit}
\end{table}
    For example, when $n\geq 3$ and $k\equiv -1+2^{n-1}\pmod{2^n}$,~\eqref{eq:tk} implies that $t$ is even, and so~\eqref{eq:ki} is equivalent to saying that there exists $i\in \Z$ such that
    \begin{equation}\label{eq:2t}
        (2-2^{n-1})i+(2^{n-1}+t)(-1+2^{n-1}) \equiv 0\pmod{2^n}.
    \end{equation}
    Given the condition that $t$ is even,~\eqref{eq:2t} is equivalent to
    \[
        (1-2^{n-2})i-2^{n-2}-\frac{t}{2} \equiv 0\pmod{2^{n-1}},
    \]
    which always has a solution $i\in \Z$. These give the equivalent condition for~\eqref{eq:ki} in the last row of Table~\ref{tab:kit}.

    Noting that $[a,y]=(y^{-1})^ay=y^ay=(x^ty)y=x^t$, we conclude that $t$ is even if and only if $[a,y]\in \la x^2\ra$, that $t\equiv 0\pmod{2^n}$ if and only if $[a,y]=e$, and that $t\equiv 2^{n-1}\pmod{2^n}$ if and only if $[a,y]=a^2$. Hence, in this subcase, $H$ has a complement in $K$ if and only if conditions~\eqref{enu:cH} and~\eqref{enu:dH} hold.
\end{proof}

The following two corollaries are direct consequences of Theorem~\ref{thm:pciff} and Lemma~\ref{lm:C2nrtimesC2}.

\begin{corollary}\label{cor:Da^2inQ}
    Let $n\geq 2$ be an integer, and let $G$ be a group with a subgroup
    \[
        H=\langle x, y \mid x^{2^n}=e,\, y^2=e,\, x^y=x^{-1}\rangle\cong D_{2^{n+1}}.
    \]
    Then $H$ is a perfect code of $G$ if and only if for each $a\in \NN_G(H)\setminus H$ with $a^2\in H$, all the following conditions hold:
    \begin{enumerate}[\rm(a)]
        \item $a^2\in \langle x\rangle$;
        \item if $\la x, a\ra\cong C_{2^{n+1}}$, then $a^y=a^{-1}$;
        \item if $\la x, a\ra\cong Q_{2^{n+1}}$, then $[a,y]\in \la x^2 \ra$.
    \end{enumerate}
\end{corollary}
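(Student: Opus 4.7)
The plan is to obtain this corollary as a direct combination of Theorem~\ref{thm:pciff} and Lemma~\ref{lm:C2nrtimesC2}. First I would observe that the given presentation realizes $H$ as $\langle x\rangle\rtimes\langle y\rangle$ with generators $x$ of order $2^n$ and $y$ of order $2$ satisfying the relation~\eqref{eq:k} with $k\equiv -1\pmod{2^n}$. In particular $H\cong C_{2^n}\rtimes C_2$, so all hypotheses needed to invoke Lemma~\ref{lm:C2nrtimesC2} are in place.

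By Theorem~\ref{thm:pciff}, $H$ is a perfect code of $G$ if and only if, for every $a\in\NN_G(H)\setminus H$ with $a^2\in H$, the subgroup $H$ admits a complement in $H\langle a\rangle$. For each such $a$, I would then apply Lemma~\ref{lm:C2nrtimesC2} to the quadruple $(H,x,y,a)$, so existence of a complement is equivalent to the conjunction of the four conditions (a)--(d) listed there. It remains to check how these four conditions specialize under the standing assumption $H\cong D_{2^{n+1}}$.

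The specialization is essentially bookkeeping: condition (a) of the lemma is literally condition (a) of the corollary; condition (b) of the lemma, since $H\cong D_{2^{n+1}}$ is automatic, collapses to the single requirement $a^y=a^{-1}$, which is condition (b) of the corollary; condition (c) of the lemma is vacuous in our setting since $H\not\cong C_{2^n}\times C_2$; and condition (d) of the lemma is exactly condition (c) of the corollary. Collecting these matched conditions yields the stated characterization.

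No genuine obstacle is anticipated, as the entire argument is a direct application of two already-proved results; the only point requiring care is to verify correctly that $k\equiv -1\pmod{2^n}$ for the dihedral presentation and that conditions (c) and (b) of Lemma~\ref{lm:C2nrtimesC2} simplify (or become vacuous) precisely as claimed, so that the four bullet points of the lemma correspond, without loss, to the three conditions (a)--(c) of the corollary.
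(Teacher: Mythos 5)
Your proposal is correct and is exactly the route the paper intends: the paper states Corollary~\ref{cor:Da^2inQ} as a direct consequence of Theorem~\ref{thm:pciff} and Lemma~\ref{lm:C2nrtimesC2}, and your specialization (noting $k\equiv-1\pmod{2^n}$, that condition (b) of the lemma collapses since $H\cong D_{2^{n+1}}$ is automatic, and that condition (c) is vacuous because $D_{2^{n+1}}$ is nonabelian for $n\geq 2$) is the precise bookkeeping required.
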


\begin{corollary}\label{cor:a^2inQ}
    Let $n\geq 3$ be an integer, and let $G$ be a group with a subgroup
    \[
        H=\langle x, y \mid x^{2^n}=e,\, y^2=e,\, x^y=x^{-1+2^{n-1}}\rangle.
    \]
    Then $H$ is a perfect code of $G$ if and only if $a^2\in \langle x\rangle$ for each $a\in \NN_G(H)\setminus H$ with $a^2\in H$.
\end{corollary}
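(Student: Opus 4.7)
The plan is to derive this as a direct consequence of Theorem~\ref{thm:pciff} together with Lemma~\ref{lm:C2nrtimesC2}, in the same spirit as Corollary~\ref{cor:Da^2inQ}. First I would observe that the given presentation places $H$ in the setting of Lemma~\ref{lm:C2nrtimesC2} with $k \equiv -1 + 2^{n-1} \pmod{2^n}$, so for $n\geq 3$ the group $H$ is the semi-dihedral group of order $2^{n+1}$. In particular $H \not\cong D_{2^{n+1}}$ and $H \not\cong C_{2^n}\times C_2$, so the hypotheses of items~(c) and~(d) of Lemma~\ref{lm:C2nrtimesC2} both fail, rendering those two conditions vacuous.

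By Theorem~\ref{thm:pciff}, $H$ is a perfect code of $G$ if and only if, for every $a \in \NN_G(H)\setminus H$ with $a^2\in H$, the subgroup $H$ admits a complement in $H\langle a\rangle$. Fix such an $a$ and write $Q=\langle x,a\rangle$. The forward direction is immediate from Lemma~\ref{lm:C2nrtimesC2}(a): a complement forces $a^2\in\langle x\rangle$. For the converse, assuming $a^2\in\langle x\rangle$, the task reduces to verifying condition~(b) of Lemma~\ref{lm:C2nrtimesC2}; since $H\not\cong D_{2^{n+1}}$, this is equivalent to showing $Q\not\cong C_{2^{n+1}}$.

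The only non-routine step, and the main obstacle, is ruling out $Q\cong C_{2^{n+1}}$. I would do this directly: suppose for contradiction that $Q=\langle w\rangle$ with $|w|=2^{n+1}$. Since $|H\langle a\rangle|=2|H|=2|Q|$, the subgroup $Q$ has index $2$ in $H\langle a\rangle$, so $y$ normalizes $Q$ and acts on it by an automorphism of order dividing $2$. Hence $w^y = w^\ell$ with $\ell^2 \equiv 1 \pmod{2^{n+1}}$, which forces $\ell \equiv \pm 1$ or $\pm 1 + 2^n \pmod{2^{n+1}}$; in every case $\ell \equiv \pm 1 \pmod{2^n}$. Writing $x=w^{2s}$ for some odd $s$ (using $[Q:\langle x\rangle]=2$), one gets $x^y = x^{\ell}$ with the exponent read modulo $2^n$, so $\ell \equiv -1+2^{n-1}\pmod{2^n}$ by the defining relation of $H$. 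This contradicts $\ell \equiv \pm 1 \pmod{2^n}$, since $-1+2^{n-1}\not\equiv \pm 1\pmod{2^n}$ for $n\geq 3$. Thus $Q\not\cong C_{2^{n+1}}$, condition~(b) holds, and the conclusion follows from Lemma~\ref{lm:C2nrtimesC2}.
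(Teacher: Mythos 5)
Your proposal is correct and takes essentially the same route as the paper, which states this corollary as a direct consequence of Theorem~\ref{thm:pciff} and Lemma~\ref{lm:C2nrtimesC2} (conditions~(c) and~(d) being vacuous for the semidihedral group and condition~(b) reducing to $\la x,a\ra\not\cong C_{2^{n+1}}$). The one step you work out explicitly---that $\la x,a\ra\cong C_{2^{n+1}}$ is impossible because $y$ would then act on the cyclic group $\la x,a\ra$ by an automorphism of order at most $2$, forcing $x^y=x^{\pm1}$ and contradicting $x^y=x^{-1+2^{n-1}}$---is exactly the detail the paper leaves implicit (it is contained in Subcase~2.2 of the proof of Lemma~\ref{lm:C2nrtimesC2}), and your verification of it is accurate.
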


For a $2$-subgroup $H$ of a group $G$, the structure of $H$ is constrained by the structure of the Sylow $2$-subgroups of $G$. In light of the exposition in Lemma~\ref{lm:C2nrtimesC2}, we obtain the follow criterion for determining whether $H$ is a perfect code of $G$ when $G$ has dihedral Sylow $2$-subgroups.

\begin{corollary}\label{cor:dihedral}
    Let $G$ be group whose Sylow $2$-subgroups are isomorphic to $D_{2n}$ for some integer $n\geq 2$, and let $H$ be a $2$-subgroup of $G$. Then $H$ is not a perfect code of $G$ if and only if there exists a cyclic $2$-subgroup $C$ of $G$ such that $1<H<C$.
\end{corollary}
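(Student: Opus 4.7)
The plan is to exploit the fact that every $2$-subgroup of $G$ embeds in some Sylow $2$-subgroup $\cong D_{2n}$, and is therefore itself either cyclic or dihedral of $2$-power order; in particular $G$ admits no generalized quaternion $2$-subgroup. I would then split the argument according to the structure of $H$.

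If $H=1$, both sides of the stated equivalence are trivially consistent, since the trivial subgroup is always a perfect code. For $H$ cyclic and non-trivial, I would invoke Lemma~\ref{lm:cyclic}: since the generalized quaternion possibility is ruled out by the dihedral Sylow hypothesis, ``$H$ is not a perfect code'' reduces to ``$H$ is contained in a cyclic subgroup $K\leq G$ with $|K|=2|H|$''. Matching this to the stated condition is routine in either direction, using that a cyclic $2$-subgroup $C$ strictly containing $H$ has a unique subgroup of order $2|H|$ sandwiched between $H$ and $C$, and conversely any such $K$ itself witnesses the right-hand side.

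The bulk of the work will be in the remaining case, where $H$ is dihedral of order $2^k$ with $k\geq 2$. Here the right-hand condition automatically fails, because a non-cyclic group cannot embed into a cyclic one, so the goal is just to show $H$ \emph{is} a perfect code. I plan to apply Theorem~\ref{thm:pciff}: for each $a\in\NN_G(H)\setminus H$ with $a^2\in H$, set $K=H\la a\ra$, which is a $2$-subgroup of order $2^{k+1}$ and so sits inside some Sylow $D_{2n}$. Since every subgroup of $D_{2n}$ is cyclic or dihedral, and $H\leq K$ forces $K$ to be non-cyclic, we must have $K\cong D_{2^{k+1}}$. An elementary involution count in $D_{2^{k+1}}$ against $D_{2^k}$ then produces an involution $b\in K\setminus H$, and $\la b\ra$ is the required complement of $H$ in $K$.

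The only delicate point I anticipate is verifying that $K=H\la a\ra$ cannot be cyclic, which however is immediate from the fact that a dihedral group of order $\geq 4$ does not embed into any cyclic group. After that, the existence of an involution in $K\setminus H$ is a one-line consequence of the structure of $D_{2^{k+1}}$, so the main thrust of the proof really is the reduction, via the dihedral Sylow hypothesis, to this very restricted local picture.
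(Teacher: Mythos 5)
Your proposal is correct, and its skeleton coincides with the paper's: the trichotomy $H=1$, $H$ cyclic, $H$ dihedral, with Lemma~\ref{lm:cyclic} (plus the absence of generalized quaternion subgroups inside dihedral Sylow $2$-subgroups) settling the cyclic case, and Theorem~\ref{thm:pciff} settling the dihedral case. The one genuine difference is how you produce the complement when $H$ is dihedral: the paper analyses the possible shapes of $a$ inside the dihedral group $H\la a\ra$ (either $a$ is an involution, or $\la x\ra$ has index $2$ in $\la a\ra$ with $a^y=a^{-1}$) and then invokes the structural Lemma~\ref{lm:C2nrtimesC2}, whereas you simply note that $K=H\la a\ra$ is dihedral of order $2|H|$, count involutions in $K$ versus $H$ to find an involution $b\in K\setminus H$, and take $\la b\ra$ as the complement. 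Your involution count is valid (a dihedral $2$-group of order $2^{k+1}$ has $2^k+1$ involutions, strictly more than any index-$2$ subgroup can contain, and with the paper's convention $D_4=C_2\times C_2$ the edge case $k=2$ is covered), so your route is a self-contained and slightly more elementary treatment of this corollary; what it does not give, and what the paper's detour through Lemma~\ref{lm:C2nrtimesC2} does, is the finer information about which cosets $aH$ carry involutions, which the paper reuses elsewhere.
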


\begin{proof}
    If $H=1$, then it follows form Theorem~\ref{thm:1} that $H$ is a perfect code of $G$. In the following, assume that $H>1$. Since $H$ is a $2$-subgroup of $G$, it is contained in some Sylow $2$-subgroup of $G$, and so $H$ is either dihedral or cyclic.

    Assume first that $H$ is dihedral. Then there exist $x,y\in H$ such that $H=\la x,y\ra$ with $x^m=y^2=e$ and $x^y=x^{-1}$. Take an arbitrary $a\in \NN_G(H)\setminus H$ with $a^2\in H$. Since $H\la a\ra$ is a $2$-group, it follows that $H\la a\ra$ is contained in some Sylow $2$-subgroup of $G$, and so $H\la a\ra$ is also dihedral. This implies that one of the following holds:
    \begin{enumerate}[\rm(i)]
        \item \label{enu:iinvo} $a$ is an involution;
        \item \label{enu:iiinvo} $\la x\ra$ is an index-$2$ subgroup of $\la a\ra$ and $H\la a\ra=\la a\ra\rtimes \la y\ra$.
    \end{enumerate}
    For case~\eqref{enu:iinvo}, $\la a\ra$ is a complement of $H$ in $H\la a\ra$. For case~\eqref{enu:iiinvo}, $a^2\in \la x\ra$ and $a^y=a^{-1}$, whence we conclude from Lemma~\ref{lm:C2nrtimesC2} that $H$ has a complement in $H\la a\ra$. Thus, by Theorem~\ref{thm:pciff}, $H$ is a perfect code of $G$.

    Assume next that $H$ is cyclic. Note that Sylow $2$-subgroups of $G$ are dihedral and hence do not contain generalized quaternion groups. Therefore, we conclude from Lemma~\ref{lm:cyclic} that $H$ is not a perfect code of $G$ if and only if there exists a cyclic $2$-subgroup $C\leq G$ such that $1<H<C$. This completes the proof.
\end{proof}

\section{Reduction via quotients}\label{sec:4}

We have established in Section~\ref{sec:2} a necessary condition for a subgroup $M$ to be a perfect code of a group $G$: with $K$ denoting $\mathrm{Core}_G(M)$, the subgroup $M/K$ is a perfect code of the quotient group $G/K$ (see Lemma~\ref{lm:quotient}\eqref{enu:quotient}). In this section, we study the lifting problem: assuming that $M/K$ is a perfect code of $G/K$, how can we determine whether this implies that $M$ is a perfect code of
$G$? To this end, we first present the following \emph{Diamond Lemma}, whose underlying idea is inspired by the Diamond Isomorphism Theorem (also known as the Second Isomorphism Theorem) in group theory, illustrated in Figure~\ref{fig:1}. Then we apply this lemma to show in Theorem~\ref{thm:split} that, whenever $G$ is a split extension of $K$ by $G/K$, the subgroup $M$ is a perfect code of $G$ if and only if $M/K$ is a perfect code of $G/K$.

\begin{lemma}[Diamond Lemma]\label{lm:diamond}
    Let $H$ and $K$ be subgroups of a group $G$ such that $HK$ is a group. If $H\cap K$ is a perfect code of $K$, then $H$ is a perfect code of $HK$.
\end{lemma}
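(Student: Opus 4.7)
The plan is to invoke the equivalence of~\eqref{enu:pc1} and~\eqref{enu:pc2} in Theorem~\ref{thm:1}: a subgroup is a perfect code of an ambient group precisely when it admits an inverse-closed left transversal in that group. The hypothesis that $H\cap K$ is a perfect code of $K$ then furnishes an inverse-closed left transversal $T\subseteq K$ of $H\cap K$ in $K$, and the goal reduces to showing that this \emph{same} set $T$ is also an inverse-closed left transversal of $H$ in $HK$.

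The main step is to verify that $T$ is a left transversal of $H$ in $HK$, which is essentially the left-coset version of the Second Isomorphism Theorem (hence the name ``Diamond Lemma''). For covering, I would use that $HK$ is a group, hence $HK=KH$, so every element of $HK$ has the form $kh$ with $k\in K$ and $h\in H$; writing $k=tx$ with $t\in T$ and $x\in H\cap K$ (using that $T$ is a transversal of $H\cap K$ in $K$), I obtain $kh=t(xh)\in tH$, proving that $HK=\bigcup_{t\in T}tH$. For disjointness, if $tH=t'H$ with $t,t'\in T$, then $t'^{-1}t\in H$; combined with $t'^{-1}t\in K$, this yields $t'^{-1}t\in H\cap K$, so $t(H\cap K)=t'(H\cap K)$ and therefore $t=t'$. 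Hence $T$ is indeed a left transversal of $H$ in $HK$.

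Finally, being inverse-closed is an intrinsic property of the subset $T$ and is therefore preserved when $T$ is reinterpreted as a transversal inside the larger group $HK$. Applying Theorem~\ref{thm:1} once more then yields that $H$ is a perfect code of $HK$. I do not anticipate a genuine obstacle: the only subtleties are to deploy the assumption that $HK$ is a group in the correct place (to get $HK=KH$, so that every element of $HK$ can be written with its $K$-factor on the left) and to be careful with left-coset conventions.
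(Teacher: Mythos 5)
Your proposal is correct and follows essentially the same route as the paper: take the inverse-closed left transversal of $H\cap K$ in $K$ and show that the very same set serves as an inverse-closed left transversal of $H$ in $HK$, then apply Theorem~\ref{thm:1}. The only cosmetic difference is that the paper verifies the covering by counting cosets via $|K|/|H\cap K|=|HK|/|H|$, whereas you check it directly using $HK=KH$; both are fine.
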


\begin{figure}[h]
\begin{tikzpicture}[scale=0.6]
  \node (top) at (0,2) {\( HK \)};
  \node (left) at (-2,0) {\( H \)};
  \node (right) at (2,0) {\( K \)};
  \node (bottom) at (0,-2) {\( H \cap K \)};

  \draw (top) -- (left);
  \draw (top) -- (right);
  \draw (left) -- (bottom);
  \draw (right) -- (bottom);
\end{tikzpicture}
\caption{Diamond Lemma}
\label{fig:1}
\end{figure}
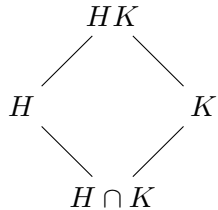

\begin{proof}
    Suppose that $H\cap K$ is a perfect code of $K$. It follows from Theorem~\ref{thm:1} that $H\cap K$ has an inverse-closed left transversal $L$ in $K$. Let $L=\{\ell_1,\ldots, \ell_t\}$, where $t=|K|/|H\cap K|$. Suppose that $\ell_i\ell_j^{-1}\in H$ for some $i,j\in [t]$. Then $\ell_i\ell_j^{-1}\in H\cap K$, and so $i=j$. This means that $\{\ell_1H,\ldots, \ell_tH\}$ are distinct left cosets of $H$ in $HK$. Noting that $t=|K|/|H\cap K|=|HK|/|H|$, we conclude that $L$ is a left transversal of $H$ in $HK$. Therefore, as $L$ is inverse-closed, $H$ is a perfect code of $HK$ by Theorem~\ref{thm:1}.
\end{proof}

The following example shows that the converse of Lemma~\ref{lm:diamond} does not hold, even if $H$ and $K$ are normal in $G$.

\begin{example}
    Let $m\geq 4$ be a $2$-power, and let
    \[
        G=\la x,y \mid x^m=y^2=e,\; xy=yx\ra\cong C_m\times C_2.
    \]
    Let $H=\la x\ra$ and $K=\la xy\ra$. Then $HK=G$ and $H\cap K=\la x^2\ra$. Hence, by Lemma~\ref{lm:cyclic}, $H$ is a perfect code of $HK$, but $H\cap K$ is not a perfect code of $K$. \qed
\end{example}

The next lemma shows that, however, if $|H|_2=|H\cap K|_2$, then the converse of Lemma~\ref{lm:diamond} holds.

\begin{lemma}\label{lm:diamondConverse}
    Let $H$ and $K$ be subgroups of a group $G$ such that $HK$ is a group and $|H|_2=|H\cap K|_2$. Then $H\cap K$ is a perfect code of $K$ if and only if $H$ is a perfect code of $HK$.
\end{lemma}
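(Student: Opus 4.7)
The plan is to exploit Lemma~\ref{lm:Z2023}, which reduces being a perfect code to a statement about a single Sylow $2$-subgroup, and then use the overgroup hereditary property in Lemma~\ref{lm:quotient}\eqref{enu:2.2a} to shuttle between the relevant groups. The ``$\Rightarrow$'' direction will need nothing at all beyond what has already been established: it is the Diamond Lemma (Lemma~\ref{lm:diamond}) verbatim, which does not rely on any hypothesis about $2$-parts. So I would dispatch that direction in a single sentence.

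For the converse, I assume $H$ is a perfect code of $HK$ and aim to show $H\cap K$ is a perfect code of $K$. The hypothesis $|H|_2=|H\cap K|_2$, combined with the containment $H\cap K\leq H$, is exactly what is needed to guarantee that every Sylow $2$-subgroup $Q$ of $H\cap K$ is also a Sylow $2$-subgroup of $H$. I would fix such a $Q$ and then run the following chain: by the equivalence of (a) and (b) in Lemma~\ref{lm:Z2023} applied to $H\leq HK$, the subgroup $Q$ is a perfect code of $HK$; since
\[
    Q\leq H\cap K\leq K\leq HK,
\]
Lemma~\ref{lm:quotient}\eqref{enu:2.2a} gives that $Q$ is a perfect code of $K$; finally, a second application of Lemma~\ref{lm:Z2023}, this time to $H\cap K\leq K$ with the same Sylow $2$-subgroup $Q$, yields that $H\cap K$ is a perfect code of $K$.

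There is no real obstacle here; the proof is essentially a bookkeeping exercise on top of Lemma~\ref{lm:Z2023}, Lemma~\ref{lm:quotient}\eqref{enu:2.2a}, and Lemma~\ref{lm:diamond}. The only step that requires noticing the specific hypothesis is the very first observation in the converse, namely that $|H|_2=|H\cap K|_2$ is precisely the condition which lets a Sylow $2$-subgroup of $H\cap K$ serve as a Sylow $2$-subgroup of $H$; without this we could not apply Lemma~\ref{lm:Z2023} in both directions with the same group $Q$, and the argument would break down (as the example preceding the lemma already shows).
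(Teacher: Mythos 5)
Your proposal is correct and follows essentially the same route as the paper's proof: both directions use the Diamond Lemma for sufficiency, and for the converse both fix a Sylow $2$-subgroup $Q$ of $H\cap K$ (which is also one of $H$ by the hypothesis $|H|_2=|H\cap K|_2$), apply Lemma~\ref{lm:Z2023} in $HK$, pass to $K$ via Lemma~\ref{lm:quotient}\eqref{enu:2.2a}, and apply Lemma~\ref{lm:Z2023} once more. No gaps; this matches the paper's argument step for step.
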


\begin{proof}
    By Lemma~\ref{lm:diamond}, we only need to show that if $H$ is a perfect code of $HK$, then $H\cap K$ is a perfect code $K$. Since $|H|_2=|H\cap K|_2$, we have $\Syl_2(H\cap K)\subseteq \Syl_2(H)$. Assume that $H$ is a perfect code of $HK$, and let $Q\in \Syl_2(H\cap K)$. Then $Q\in \Syl_2(H)$, which together with Lemma~\ref{lm:Z2023} implies that $Q$ is a perfect code of $HK$. Noting that $Q\leq K\leq HK$, we conclude from Lemma~\ref{lm:quotient}\eqref{enu:2.2a} that $Q$ is a perfect code of $K$. Thus, by Lemma~\ref{lm:Z2023}, $H\cap K$ is a perfect code of $K$, as desired.
\end{proof}

We now prove our main theorem of this section using the Diamond Lemma. This theorem shows that, in a split extension, determining whether $M$ is a perfect code of $G$ can be reduced to determining whether $M/K$ is a perfect code of $G/K$.

\begin{theorem}\label{thm:split}
    Let $G$ be a group, let $M$ be a subgroup of $G$, and let $K$ be a normal subgroup of $G$ contained in $M$. Suppose that $G$ is a split extension of $K$ by $G/K$. Then $M$ is a perfect code of $G$ if and only if $M/K$ is a perfect code of $G/K$.
\end{theorem}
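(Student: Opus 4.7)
The forward implication is immediate from Lemma~\ref{lm:quotient}\eqref{enu:quotient}, so the content is in the reverse direction. The plan is to realize $M/K$ and $G/K$ literally as subgroups of $G$ via the splitting and then apply the Diamond Lemma.

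Since $G$ is a split extension of $K$ by $G/K$, there exists a complement $H\leq G$ with $G=K\rtimes H$, and the natural projection $\pi\colon G\to G/K$ restricts to an isomorphism $H\to G/K$. Set $L=M\cap H$. First I would show $M=KL$: the inclusion $KL\leq M$ is clear because $K\leq M$ and $L\leq M$; conversely, any $m\in M$ decomposes uniquely as $m=kh$ with $k\in K,\, h\in H$, and then $h=k^{-1}m\in M\cap H=L$. In particular this also gives $\pi(L)=M/K$, so the isomorphism $\pi|_H\colon H\to G/K$ restricts to an isomorphism $L\to M/K$.

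Assuming $M/K$ is a perfect code of $G/K$, the isomorphism above yields that $L$ is a perfect code of $H$ (being a perfect code is an invariant of a pair (group, subgroup) under isomorphism, which one checks directly from the inverse-closed-transversal characterization in Theorem~\ref{thm:1}\eqref{enu:pc2}). Now observe that $MH$ is a subgroup of $G$: indeed $MH\supseteq KH=G$, so $MH=G$, and moreover $M\cap H=L$. Applying the Diamond Lemma (Lemma~\ref{lm:diamond}) to the pair $(M,H)$---with $M\cap H=L$ a perfect code of $H$---we conclude that $M$ is a perfect code of $MH=G$, completing the reverse direction.

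There is no real obstacle here; the only thing to verify carefully is the identification $L\cong M/K$ compatibly with the ambient inclusions, so that the Diamond Lemma applies cleanly. The whole argument is essentially a change of viewpoint: the splitting lets one transplant the hypothesis from the quotient $G/K$ back into $G$ as the concrete subgroup $H$, after which the Diamond Lemma does all the work.
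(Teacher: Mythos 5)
Your proposal is correct and follows essentially the same route as the paper: take a complement $H$ of $K$, show $M=K\rtimes(M\cap H)$ so that $M\cap H\cong M/K$ is a perfect code of $H\cong G/K$, and then apply the Diamond Lemma to the pair $(M,H)$ with $MH=G$. This matches the paper's argument step for step.
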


\begin{proof}
    By Lemma~\ref{lm:quotient}\eqref{enu:quotient}, if $M$ is a perfect code of $G$, then $M/K$ is a perfect code of $G/K$. Conversely, suppose that $M/K$ is a perfect code of $G/K$. Since $G$ is a split extension of $K$ by $G/K$, the group $K$ has a complement $\tilde{G}$ in $G$ such that $\tilde{G}\cong G/K$. Let $\tilde{M}=M\cap \tilde{G}$. Then
    \[
        M=M\cap G=M\cap (K\tilde{G})=K(M\cap \tilde{G})=K\tilde{M}=K\rtimes \tilde{M},
    \]
    and so $\tilde{M}$ is the image of $M$ under the quotient modulo $K$. As $M/K$ is a perfect code of $G/K$, it follows that $\tilde{M}$ is a perfect code of $\tilde{G}$. Hence, by Lemma~\ref{lm:diamond}, $M$ is a perfect code of $M\tilde{G}=K\tilde{G}=G$, as required.
    \begin{figure}[h]
    \begin{tikzpicture}[scale=0.6]
      \node (top) at (0,2) {\( G=K\tilde{G} \)};
      \node (left) at (-2,0) {\( M \)};
      \node (right) at (2,0) {\( \tilde{G} \)};
      \node (bottom) at (0,-2) {\( \tilde{M}=M\cap \tilde{G} \)};

      \draw (top) -- (left);
      \draw (top) -- (right);
      \draw (left) -- (bottom);
      \draw (right) -- (bottom);
    \end{tikzpicture}
    \caption{Demonstration in the proof of Theorem~\ref{thm:split}}
    \label{fig:2}
    \end{figure}
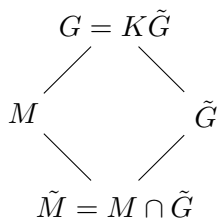
\end{proof}

% {\color{blue}
% \begin{corollary}
%     Let $G$ be a group, let $M$ be a maximal subgroup of $G$, and let $K=\mathrm{Core}_G(M)$. Suppose that $G$ is a split extension of $K$ by $G/K$. Then $M$ is a perfect code of $G$ if and only if $M/K$ is a perfect code of $G/K$.
% \end{corollary}}

We remark that the split extension condition in Theorem~\ref{thm:split} cannot be removed. This is shown by two examples in the rest of the section.

We adopt the notation in~\cite[\S 2.7]{W2009} to represent the double cover $2.S_n^-$ of the symmetric group $S_n$ for $n\geq 4$. In particular, the cycles in $2.S_n^-$ are written in square brackets, and for each $\pi\in S_n$, the two preimages of $\pi$ in $2.S_n^-$ are $+\pi$ and $-\pi$. According to~\cite[\S 2.7]{W2009}, the following relations hold:
    \begin{itemize}
        \item $[i,j]^2=-e$ for $i,j\in [n]$ with $i\neq j$.
        \item $[i,j]^{\pm \pi}=-[i^\pi,j^\pi]$ for odd permutation $\pi\in S_n$ and $i,j\in [n]$ with $i\neq j$.
        \item $[a_i,a_{i+1},\ldots, a_j]=[a_i,a_{i+1}][a_i,a_{i+2}]\cdots [a_i,a_j]$ for $i<j$.
    \end{itemize}

\begin{example}
    Let $G=2.S_4^-$, and let $M=\la [1,2], [1,2,3]\ra\cong 2.S_3^-$. Then $K:=\mathrm{Core}_G(M)=\{\pm e\}$, $G/K\cong S_4$ and $M/K\cong S_3$. Since $\{e,(1,4), (2,4), (1,2)(3,4)\}$ is an inverse-closed left transversal of $S_3$ in $S_4$, we know from Theorem~\ref{thm:1} that $M/K$ is a perfect code of $G/K$. Let $x=[1,2]$, $y=[3,4]$, and $Q=\la x\ra\cong C_4$. Then $Q\in \Syl_2(M)$ and $Q\la y\ra=\la x,y\mid x^4=e,\ y^2=x^2,\ x^y=x^{-1}\ra\cong Q_8$. Thus, we derive from Lemma~\ref{lm:cyclic} that $Q$ is not a perfect code of $G$, and so, by Lemma~\ref{lm:Z2023}, $M$ is not a perfect code of $G$.
\end{example}

To establish our next example, we need the following complete classification of subgroup perfect codes of $\SL_2(q)$.

\begin{lemma}\label{lm:SL}
Let $G=\SL_2( q)$ with prime power $q$. For even $q$, every subgroup of $G$ is a perfect code. For odd $q$, a subgroup $H$ of $G$ is a perfect code if and only if either $|H|_2=1$ or $|H|_2=|G|_2$.
\end{lemma}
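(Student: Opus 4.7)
The plan is to reduce to Sylow $2$-subgroups via Lemma~\ref{lm:Z2023} and then apply the criteria of Lemmas~\ref{lm:cyclic} and~\ref{lm:quaternion}. When $q$ is even, $\SL_2(q)=\PSL_2(q)$ and Lemma~\ref{lm:Sylow2PSL}(a) says its Sylow $2$-subgroups are elementary abelian, so Lemma~\ref{lm:elem} immediately implies that every subgroup is a perfect code.

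For odd $q$ my first step is to verify that every Sylow $2$-subgroup $P$ of $G=\SL_2(q)$ is generalized quaternion. The only involution of $G$ is $-I$, since any $g\in \SL_2(q)$ with $g^2=I$ and $g\ne \pm I$ would have eigenvalues $\{1,-1\}$ and hence $\det(g)=-1$, a contradiction. Thus $P$ has a unique involution; combined with the fact that $P/\langle -I\rangle\in\Syl_2(\PSL_2(q))$ is dihedral of order at least $4$ by Lemma~\ref{lm:Sylow2PSL}(b)--(c), $P$ itself cannot be cyclic, so Lemma~\ref{lm:2group} forces $P\cong Q_{2^k}$ for some $k\geq 3$.

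Now fix $Q\in \Syl_2(H)$. By Lemma~\ref{lm:Z2023}, $H$ is a perfect code of $G$ if and only if $Q$ is. If $|H|_2=1$ then $Q=1$, which is a perfect code by Theorem~\ref{thm:1}\eqref{enu:pc2} (take $G$ itself as the transversal). If $|H|_2=|G|_2$ then $Q\in\Syl_2(G)$; since any $2$-subgroup of $G$ has order at most $|G|_2$, this $Q$ is a maximal generalized quaternion subgroup of $G$, and Lemma~\ref{lm:quaternion} gives that $Q$ is a perfect code. The remaining case is $1<|H|_2<|G|_2$: here $Q$ is a proper non-trivial subgroup of some $P\cong Q_{2^k}$, and $Q$ is itself cyclic or generalized quaternion by the standard subgroup structure of $Q_{2^k}$. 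The strategy is to exhibit a subgroup $R\leq P$ with $Q<R$, $|R|=2|Q|$, and $R$ again cyclic or generalized quaternion; then Lemma~\ref{lm:cyclic} or Lemma~\ref{lm:quaternion} (according as $Q$ is cyclic or generalized quaternion) shows that $Q$ is not a perfect code of $G$.

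The main obstacle is producing this overgroup $R$ inside $Q_{2^k}$, and it is handled by a short case analysis on how $Q$ sits in $P$. A proper cyclic subgroup of $P$ either lies in the (essentially unique) maximal cyclic subgroup of $P$ and so embeds in a cyclic overgroup of twice its order inside that maximal cyclic, or it is $\langle -I\rangle$ or a $C_4$ generated by an element outside the maximal cyclic subgroup, in which case it lies in a cyclic or $Q_8$ overgroup of $P$ of twice the order; and a proper generalized quaternion subgroup $Q_{2^j}<P$ lies in a $Q_{2^{j+1}}$ subgroup of $P$ (or in $P$ itself when $j=k-1$). Verifying these standard structural facts about the subgroup lattice of $Q_{2^k}$ closes the argument.
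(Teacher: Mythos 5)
Your proposal is correct in substance and follows essentially the same route as the paper: reduce to $Q\in\Syl_2(H)$ via Lemma~\ref{lm:Z2023}, use that for odd $q$ the Sylow $2$-subgroups of $\SL_2(q)$ are generalized quaternion (which you prove directly from the uniqueness of the involution $-I$ and Lemma~\ref{lm:2group}, instead of citing~\cite[Table~8.1]{BHD2013}), and then apply Lemmas~\ref{lm:cyclic} and~\ref{lm:quaternion} in the range $1<|H|_2<|G|_2$. One small slip in your closing case analysis: if the cyclic subgroup $Q$ equals the maximal cyclic subgroup $\langle x\rangle$ of $P\cong Q_{2^k}$ (which has index $2$ in $P$ and does occur, e.g.\ as the Sylow $2$-subgroup of a cyclic torus), there is no cyclic overgroup of twice its order inside that maximal cyclic subgroup; the needed overgroup is instead $R=P$ itself, which is generalized quaternion of index $2$ over $Q$, so Lemma~\ref{lm:cyclic} still applies. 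In fact the entire case analysis can be avoided, as in the paper's proof: since $1<|Q|<|G|_2$, standard $2$-group theory gives some $K$ with $Q\leq K\leq P$ and $|K:Q|=2$, and any such $K$, being a subgroup of a generalized quaternion group, is automatically cyclic or generalized quaternion, which is all that Lemmas~\ref{lm:cyclic} and~\ref{lm:quaternion} require.
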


\begin{proof}
For even $q$, since the Sylow $2$-subgroups of $\SL_2(q)$ are elementary abelian, the conclusion follows from Lemma~\ref{lm:elem}. Now assume that $q$ is odd. The sufficiency follows immediately from Lemma~\ref{lm:Z2023}. For the necessity, suppose for a contradiction that $H$ is a subgroup perfect code of $G$ with $1<|H|_2<|G|_2$. Let $Q\in \Syl_2(H)$, and let $K\leq G$ with $Q\leq K$ and $|K:Q|=2$. It can be read off from~\cite[Table~8.1]{BHD2013} that every Sylow $2$-subgroup of $G$ is generalized quaternion. Hence, each of $Q$ and $K$ is either cyclic or generalized quaternion. Therefore, Lemmas~\ref{lm:cyclic} and~\ref{lm:quaternion} imply that $Q$ is not a perfect code of $G$. Hence, by Lemma~\ref{lm:Z2023}, $H$ is not a perfect code of $G$, a contradiction. This completes the proof.
\end{proof}

\begin{example}\label{ex:SL}
    Let $q=p^f\geq 5$ for some odd prime $p$ and positive integer $f$, and let $G=\SL_2( q)$. Then $G$ has a maximal subgroup
    \[
        M=\Bigg\{\begin{pmatrix}
            1 & a\\
            0 & 1
        \end{pmatrix}\mid a\in \FF_q\Bigg\}\rtimes
        \Bigg\{\begin{pmatrix}
            b & 0\\
            0 & b^{-1}
        \end{pmatrix}\mid b\in \FF_q^*\Bigg\}\cong C_p^f\rtimes C_{q-1}.
    \]
    Noting that $|M|_2=(q-1)_2<(q^2-1)_2=|G|_2$, we obtain from Lemma~\ref{lm:SL} that $M$ is not a perfect code of $G$. However, letting $K=\mathrm{Core}_G(M)$, we have
    \[
        K=\ZZ(G)\cong C_2,\ \ M/K\cong C_p^f\rtimes C_{(q-1)/2},\ \ G/K=\PSL_2(q),
    \]
    and so, by~\cite{CLZ2025} (or Lemma~\ref{lm:PSL}), $M/K$ is a perfect code of $G/K$.\qed
\end{example}

\section{Normal subgroups and proof of Theorem~\ref{thm:6}}\label{sec:5}

In this section, we prove Theorem~\ref{thm:6}---that is, for each primitive group of type $\mathrm{HA}$, $\mathrm{HS}$, $\mathrm{HC}$, $\mathrm{TW}$, $\mathrm{SD}$ or $\mathrm{CD}$, every point stabilizer is a perfect code of the group. Since the same proof applies to quasiprimitive groups of those six types, we prove Theorem~\ref{thm:6} by establishing more general results in Lemmas~\ref{lm:4types} and~\ref{lm:2types}. The following result is a direct corollary of the Diamond Lemma (Lemma~\ref{lm:diamond}).

\begin{lemma}\label{lm:normal}
    Let $G$ be a group, let $N$ be a normal subgroup of $G$, and let $M$ be a subgroup of $G$.  If $M\cap N$ is a perfect code of $N$, then $M$ is a perfect code of $G$.
\end{lemma}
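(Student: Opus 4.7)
The plan is to view Lemma~\ref{lm:normal} as an immediate specialization of the Diamond Lemma (Lemma~\ref{lm:diamond}), with no additional machinery required. The only structural observation needed is that because $N$ is normal in $G$, the product set $MN$ is automatically a subgroup of $G$, so the pair $(H,K)=(M,N)$ meets the hypothesis of Lemma~\ref{lm:diamond} that $HK$ be a group.

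With this observation in hand, the proof reduces to a one-line application. Setting $H=M$ and $K=N$, the hypothesis $H\cap K = M\cap N$ is a perfect code of $K=N$ is exactly the input of Lemma~\ref{lm:diamond}. Invoking that lemma delivers its conclusion: $H=M$ is a perfect code of $HK=MN$, which is the content of the lemma once one recognizes the diamond $M,\,N,\,MN,\,M\cap N$ sitting inside $G$.

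There is no further technical content to produce here, because the substantive work—building an inverse-closed left transversal of $M$ out of an inverse-closed left transversal of $M\cap N$ in $N$, using Theorem~\ref{thm:1}(b)—is already packaged inside the proof of Lemma~\ref{lm:diamond}. The only checks I would explicitly record are therefore (i) $MN\leq G$, which is elementary from $N\trianglelefteq G$ (any element $mn\cdot m'n' = m(m')\cdot (m'^{-1}nm')n'$ lies in $MN$), and (ii) that $M\cap N=H\cap K$ in the diamond configuration, which is tautological. Consequently, the step that might look like an obstacle—passing from information about a subgroup $N$ to information about the ambient group via the intermediate subgroup $MN$—is exactly what the Diamond Lemma was designed to perform, and normality of $N$ is used solely to make the diamond legitimate.
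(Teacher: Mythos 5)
Your route is exactly the paper's: the paper offers no separate proof of Lemma~\ref{lm:normal} beyond the sentence that it is ``a direct corollary of the Diamond Lemma,'' and your instantiation $H=M$, $K=N$ with the observation that $N\trianglelefteq G$ forces $MN\leq G$ is precisely that corollary. However, there is a genuine gap that your write-up papers over in the clause ``which is the content of the lemma once one recognizes the diamond \dots sitting inside $G$.'' The Diamond Lemma concludes that $M$ is a perfect code of $MN$, whereas the statement asserts that $M$ is a perfect code of $G$, and these are not the same unless $MN=G$. Being a perfect code does not propagate upward from a subgroup to an overgroup --- Lemma~\ref{lm:quotient}(a) goes only in the downward direction --- and indeed the statement as literally written is false: taking $N=1$ it would assert that every subgroup of every group is a perfect code, and concretely $G=Q_8$, $N=\ZZ(G)\cong C_2$, $M=\la i\ra\cong C_4$ satisfies all the stated hypotheses ($M\cap N=N$ is trivially a perfect code of $N$) while $M$ is not a perfect code of $G$ by Lemma~\ref{lm:cyclic}.

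The missing ingredient is the hypothesis $G=MN$, which is what makes the top vertex of the diamond equal to the ambient group. This is not an accident of the applications: in Lemma~\ref{lm:4types} the normal subgroup is regular, and in Lemma~\ref{lm:2types} the paper explicitly notes ``the normal subgroup $N$ is transitive, which means $G=MN$'' before invoking Lemma~\ref{lm:normal}. So the defect lies in the statement of the lemma (and equally in the paper's one-line justification) rather than in your argument; but a complete proof must either add $G=MN$ to the hypotheses or verify it before citing Lemma~\ref{lm:diamond}, and your proposal should flag that step rather than treat the passage from $MN$ to $G$ as tautological.
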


With this lemma, we immediately obtain the following two results, which together complete the proof of  Theorem~\ref{thm:6}.

\begin{lemma}\label{lm:4types}
    Let $G$ be a quasiprimitive group of type $\mathrm{HA}$, $\mathrm{HS}$, $\mathrm{HC}$ or $\mathrm{TW}$, and let $M$ be a point stabilizer. Then $M$ is a perfect code of $G$.
\end{lemma}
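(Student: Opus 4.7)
The plan is to invoke Lemma~\ref{lm:normal} with $N = \Soc(G)$, reducing the question to whether $M \cap N$ is a perfect code of $N$. I will exploit the following general principle: whenever $M \cap N$ admits a complement $L$ in $N$, the subgroup $L$ is automatically inverse-closed (as any subgroup is) and is a left transversal of $M \cap N$ in $N$, so Theorem~\ref{thm:1} guarantees that $M \cap N$ is a perfect code of $N$. Thus the task reduces, in each of the four types, to exhibiting a complement of $M \cap N$ in $N=\Soc(G)$.

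First I would dispatch the HA and TW cases simultaneously: for both types the socle acts regularly on the point set by the very definition of the type, so $M \cap N = 1$, and $N$ itself serves as a (trivial) complement.

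Next I would address the HS and HC cases. In type HS the socle has the form $N = T_1 \times T_2$ with $T_1 \cong T_2 \cong T$ for some nonabelian simple group $T$, and the action of $N$ is equivalent to the two-sided regular action $(t_1,t_2) \cdot t = t_1^{-1} t t_2$ of $T \times T$ on $T$; consequently $M \cap N$ equals the diagonal $\Delta = \{(t,t) : t \in T\}$, and $T_1 \times \{e\}$ is a complement of $\Delta$ in $N$. For HC the socle decomposes as $N \cong (T \times T)^k$ for some $k \geq 2$, the intersection $M \cap N$ is correspondingly a direct product of $k$ such diagonals, and the analogous ``first factor from each pair'' subgroup is a complement.

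I expect the main obstacle to be the careful identification of $M \cap N$ in the HS and HC cases, since this depends on recognising the socle action as the natural two-sided regular representation (possibly compounded with a wreath product structure). The HA and TW cases are immediate from regularity of the socle, while in HS and HC, once the diagonal description of $M \cap N$ is in hand, the complement is transparent and Lemma~\ref{lm:normal} delivers the conclusion uniformly across all four types.
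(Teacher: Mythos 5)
Your argument is correct, but it follows a genuinely different (and slightly longer) route than the paper's. The paper does not pass through the socle at all: it uses the fact that in each of the types $\mathrm{HA}$, $\mathrm{HS}$, $\mathrm{HC}$, $\mathrm{TW}$ the group $G$ has a \emph{regular} normal subgroup $N$ (for $\mathrm{HA}$ and $\mathrm{TW}$ this is the socle, while for $\mathrm{HS}$ and $\mathrm{HC}$ it is one of the two regular minimal normal subgroups, properly contained in the socle); regularity gives $G=NM$ and $N\cap M=1$, so $N$ itself is an inverse-closed left transversal of $M$ in $G$ and Theorem~\ref{thm:1}\eqref{enu:pc2} finishes at once, with no need to identify $M\cap\Soc(G)$ or to invoke Lemma~\ref{lm:normal}. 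You instead work inside $N=\Soc(G)$, identify $M\cap N$ (trivial for $\mathrm{HA}$/$\mathrm{TW}$, a full --- possibly twisted --- diagonal for $\mathrm{HS}$/$\mathrm{HC}$), complement it by one of the two regular minimal normal subgroups (exactly the subgroup the paper uses directly as a transversal in $G$), and then lift; this is precisely the strategy the paper employs for types $\mathrm{SD}$ and $\mathrm{CD}$ in Lemma~\ref{lm:2types}, so your proof buys uniformity with that case at the cost of needing the structural description of the socle action in types $\mathrm{HS}$ and $\mathrm{HC}$. One point you should make explicit: Lemma~\ref{lm:normal} comes from the Diamond Lemma, which yields that $M$ is a perfect code of $M\Soc(G)$, so the application requires $G=M\Soc(G)$; this holds because quasiprimitivity makes the socle transitive (the paper records exactly this in the proof of Lemma~\ref{lm:2types}). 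With that sentence added, all four cases of your argument go through.
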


\begin{proof}
    Since $G$ is a quasiprimitive group of type $\mathrm{HA}$, $\mathrm{HS}$, $\mathrm{HC}$, or $\mathrm{TW}$, it has a regular normal subgroup $N$. This means that $G=NM$ and $N\cap M=1$. In particular, $N$ is an inverse-closed left transversal of $M$ in $G$. Then it follows from Theorem~\ref{thm:1} that $M$ is a perfect code of $G$.
\end{proof}

\begin{lemma}\label{lm:2types}
    Let $G$ be a quasiprimitive group of type $\mathrm{SD}$ or $\mathrm{CD}$, and let $M$ be a point stabilizer. Then $M$ is a perfect code of $G$.
\end{lemma}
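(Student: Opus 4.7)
The plan is to reduce, via Lemma~\ref{lm:normal}, to showing that $M\cap N$ is a perfect code of the socle $N$ of $G$, and then to exhibit an explicit inverse-closed left transversal and invoke Theorem~\ref{thm:1}(b). Once such a transversal is produced for $N$, Lemma~\ref{lm:normal} lifts the conclusion to $M\leq G$, so no property of $G$ itself beyond the structure of $N$ and $M\cap N$ is needed. This is where the structural description of quasiprimitive groups of type $\mathrm{SD}$ and $\mathrm{CD}$ enters: in both cases, $N$ is a direct power of a non-abelian simple group $T$, and $M\cap N$ is (a product of) full diagonal subgroups, for which inverse-closed transversals admit a very clean description.

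For the $\mathrm{SD}$ case, the (unique) minimal normal subgroup is $N=T^k$ with $k\geq2$, and the point stabilizer satisfies $M\cap N=D$, where $D=\{(t,t,\ldots,t)\mid t\in T\}$ is a full diagonal subgroup. I would take
\[
  L=\{(t_1,t_2,\ldots,t_{k-1},e)\mid t_1,\ldots,t_{k-1}\in T\}\subseteq N.
\]
The routine check is that no two distinct elements of $L$ lie in the same left coset of $D$ (the last coordinate of any element of $D$ is determined by the others), that $|L|=|T|^{k-1}=|N|/|D|$, and that $L$ is closed under inversion since the last coordinate $e$ is fixed under inversion. Hence $L$ is an inverse-closed left transversal of $D$ in $N$, so Theorem~\ref{thm:1} gives that $D=M\cap N$ is a perfect code of $N$, and Lemma~\ref{lm:normal} completes this case.

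For the $\mathrm{CD}$ case, the socle is $N=T^{mk}$ with $m,k\geq 2$, which I view as $(T^k)^m$ through the natural wreath product structure. The point stabilizer intersects $N$ in $D^m$, the product over the $m$ copies of $T^k$ of the full diagonal subgroup $D\leq T^k$. Taking $L$ as above, the product $L^m\subseteq (T^k)^m=N$ is an inverse-closed left transversal of $D^m$ in $N$: the transversal, cardinality, and inversion-closedness properties all follow coordinate-wise from the $\mathrm{SD}$ case. Theorem~\ref{thm:1} then yields that $M\cap N$ is a perfect code of $N$, and Lemma~\ref{lm:normal} completes the proof.

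There is no substantial obstacle; the only subtlety is to make sure the description of $M\cap N$ as a (product of) full diagonal subgroup(s) is invoked from the standard structure theory of $\mathrm{SD}$ and $\mathrm{CD}$ type quasiprimitive groups, and to confirm that all of the cardinalities line up so that the claimed set is indeed a transversal rather than merely a union of coset representatives.
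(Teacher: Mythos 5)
Your proposal is correct and takes essentially the same route as the paper: reduce via Lemma~\ref{lm:normal} to showing that the diagonal intersection $M\cap N$ is a perfect code of the socle $N$, and verify this through Theorem~\ref{thm:1} using the subgroup with trivial last coordinate as an inverse-closed left transversal (the paper merely unifies the $\mathrm{SD}$ and $\mathrm{CD}$ cases by writing $N=L^k$ for a characteristically simple group $L$ and noting $L^{k-1}$ is a complement of the diagonal, which is exactly your coordinate-wise treatment of $\mathrm{CD}$). The only point the paper makes explicit and you elide is that the Diamond Lemma behind Lemma~\ref{lm:normal} really produces a perfect code of $MN$, so one should record that $MN=G$, which holds here because quasiprimitivity forces $N$ to be transitive and $M$ is a point stabilizer.
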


\begin{proof}
    Letting $N=\Soc(G)$, we have $N=L^k$ for some  non-abelian characteristically simple group $L$ and integer $k\geq 2$ such that $M\cap N=\{(\ell,\ldots, \ell)\mid \ell\in L\}$. Then since $L^{k-1}$ is a complement of $M\cap N$ in $N$, we derive from Theorem~\ref{thm:1} that $M\cap N$ is a perfect code of $N$. Since $G$ is quasiprimitive, the normal subgroup $N$ is transitive, which means $G=MN$. Therefore, by Lemma~\ref{lm:normal}, $M$ is a perfect code of $G$.
\end{proof}

\section{$2$-local analysis, primitive groups of type $\mathrm{AS}$, and proofs of Theorems~\ref{thm:AS} and~\ref{thm:perfectcode}}\label{sec:6}

In this section, we prove Theorems~\ref{thm:AS} and~\ref{thm:perfectcode} in two subsections respectively.

\subsection{Type~$\mathrm{AS}$ with socle $\PSL_2(q)$}

This subsection is devoted to the proof of Theorem~\ref{thm:AS}. For convenience, we pose the following hypothesis.

\begin{hypothesis}\label{hypo:1}
Let $G$ be an almost simple group with socle $T=\PSL_2(q)$, where $q\geq 4$ is an odd prime power, and let $M$ be a maximal subgroup of $G$ such that $M\cap T$ is a maximal subgroup of $T$.
\end{hypothesis}

We start with the following result, which was proved in~\cite{CLZ2025}, with the proof relying on results from~\cite{MWWZ2020,Z2023}. We present an alternative proof that uses only preliminaries from Section~\ref{sec:2} and results from Section~\ref{sec:3}, not only for the sake of self-containment, but also to illustrate how the new criterion for determining subgroup perfect codes developed in Section~\ref{sec:3} can be applied.

\begin{lemma}[\cite{CLZ2025}]\label{lm:PSL}
    Let \( q \) be a prime power. Then a maximal subgroup \( M \) of \( \PSL_2( q) \) is not a perfect code if and only if one the following holds:
    \begin{enumerate}[\rm(a)]
        \item \( q > 7 \), \( q \equiv -1 \pmod{8} \), and \( M\cong D_{q-1} \);
        \item \( q > 9 \), \( q \equiv 1 \pmod{8} \), and \( M \cong D_{q+1} \).
    \end{enumerate}
\end{lemma}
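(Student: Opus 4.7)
The plan is to combine Lemma~\ref{lm:Z2023}, which reduces whether $M$ is a perfect code of $G := \PSL_2(q)$ to the analogous question for a Sylow $2$-subgroup $Q$ of $M$, with the dihedral-Sylow criterion in Corollary~\ref{cor:dihedral}, followed by a row-by-row inspection of Table~\ref{tab:M}. The case $q$ even is immediate: by Lemma~\ref{lm:Sylow2PSL} the Sylow $2$-subgroups of $G$ are elementary abelian, so Lemma~\ref{lm:elem} yields that every subgroup of $G$ is a perfect code, consistent with cases (a) and (b) both excluding even $q$. The very small odd case $q=3$ is similarly immediate by direct inspection of the maximal subgroups of $A_4$.

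Suppose now that $q$ is odd with $q \geq 5$. Lemma~\ref{lm:Sylow2PSL} says the Sylow $2$-subgroups of $G$ are dihedral of the form $D_{2m}$, where $2m = (q-1)_2$ when $q \equiv 1 \pmod 4$ and $2m = (q+1)_2$ when $q \equiv 3 \pmod 4$. By Corollary~\ref{cor:dihedral}, a $2$-subgroup $Q$ of $G$ fails to be a perfect code precisely when $Q$ is non-trivial, cyclic, and properly contained in some cyclic $2$-subgroup of $G$. Any cyclic $2$-subgroup of $G$ of order at least $4$ sits inside the unique index-$2$ cyclic subgroup $C_m$ of some Sylow $2$-subgroup; and the crucial observation is that any $C_2 \leq G$ also sits inside some $C_m$, because Lemma~\ref{lm:involutionPSL} guarantees that all involutions of $G$ are conjugate and the central involution of a Sylow $2$-subgroup lies in $C_m$. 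Consequently, up to $G$-conjugacy, the cyclic $2$-subgroups of $G$ form a single chain $1 < C_2 < C_4 < \cdots < C_m$, and $C_{2^k}$ fails to be a perfect code iff $k \geq 1$ and $2^{k+1} \leq m$.

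Next I would invoke Lemma~\ref{lm:Z2023} to reduce to a Sylow $2$-subgroup $Q$ of $M$ and work through the rows of Table~\ref{tab:M}. In Rows 4--8 (the subgroups $\PGL_2(q_0)$, $\PSL_2(q_0)$, $A_5$, $A_4$, $S_4$), $Q$ is either a Klein four group or a proper dihedral $2$-group, hence non-cyclic, so $Q$ is automatically a perfect code. In Row 1 ($M \cong C_p^f \rtimes C_{(q-1)/2}$), $Q = C_{(q-1)_2/2}$ is either trivial (when $q \equiv 3 \pmod 4$) or equal to the maximum cyclic $2$-subgroup $C_m$ of $G$ (when $q \equiv 1 \pmod 4$), so $Q$ is a perfect code in both sub-cases. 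Only Rows 2 and 3 can produce failures, namely when $Q \cong C_2$: this happens in Row 2 when $q \equiv 3 \pmod 4$ and in Row 3 when $q \equiv 1 \pmod 4$. By the previous paragraph, $Q \cong C_2$ fails to be a perfect code iff $m \geq 4$, which is equivalent to $q \equiv -1 \pmod 8$ in Row 2 and to $q \equiv 1 \pmod 8$ in Row 3. Combined with the maximality constraints $q \geq 13$ for $D_{q-1}$ and $q \neq 7, 9$ for $D_{q+1}$ recorded in the table, these yield precisely the exceptional cases (a) and (b).

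The single non-routine step is the chain description of cyclic $2$-subgroups of $G$ up to $G$-conjugacy, and in particular the claim that any $C_2 \leq G$ lies inside some $C_m$. This is where Lemma~\ref{lm:involutionPSL} does the decisive work by reducing everything to a single involution up to conjugacy; the remainder of the argument is a mechanical reading of Table~\ref{tab:M}.
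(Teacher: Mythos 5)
Your proposal is correct and follows essentially the same route as the paper: reduce to a Sylow $2$-subgroup of $M$ via Lemma~\ref{lm:Z2023}, apply the dihedral-Sylow criterion of Corollary~\ref{cor:dihedral}, use Lemma~\ref{lm:involutionPSL} to embed any $C_2$ in a long cyclic $2$-subgroup, and read off the cyclic cases from Table~\ref{tab:M}. The only differences are cosmetic: your ``single chain up to conjugacy'' framing repackages the paper's use of Lemma~\ref{lm:involutionPSL}, and you treat the small and even values of $q$ by hand where the paper invokes GAP for $q<5$.
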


\begin{proof}
    It is easy to verify by GAP~\cite{GAP4} that every maximal subgroup of \( \PSL_2( q) \) is a perfect code for $q<5$. Now we assume that $q\geq 5$. Let $G=\PSL_2( q)$ with a maximal subgroup $M$, and let $Q\in \Syl_2(M)$. If $q$ is a power of $2$, then it follows from Lemma~\ref{lm:Sylow2PSL} that $G$ has elementary abelian Sylow $2$-subgroups, and so, we conclude from Lemma~\ref{lm:elem} that $M$ is a perfect code of $G$.

    Next, we may assume that $q$ is odd. Then by Lemma~\ref{lm:Sylow2PSL}, each Sylow $2$-subgroup of $G$ is isomorphic to $D_{2n}$ for some $n\geq 2$, which combined with Corollary~\ref{cor:dihedral} shows that $Q$ is a not a perfect code of $G$ if and only if
    \begin{equation}
        \label{eq:Q}
        \text{there exists a cyclic $2$-subgroup $C\leq G$ such that $1<Q<C$.}
    \end{equation}
    Note by Lemma~\ref{lm:MaximalPSL} that $Q$ is cyclic only if one of the following holds:
    \begin{enumerate}[\rm(i)]
        \item $M\cong C_p^f\rtimes C_{(q-1)/2}$ and $Q\cong C_{(q-1)_2/2}$, where $q=p^f$ and $p$ is a prime;
        \item \label{enu:ii} $M\cong D_{q-1}$, $Q\cong C_2$, $q\geq 13$ and $q\equiv -1\pmod{4}$;
        \item \label{enu:iii} $M\cong D_{q+1}$, $Q\cong C_2$, $q\neq 9$ and $q\equiv 1\pmod{4}$.
    \end{enumerate}

    Assume first that $q\equiv 1\pmod{4}$. Then we see from Lemma~\ref{lm:Sylow2PSL} that each Sylow $2$-subgroup of $G$ is isomorphic to $D_{(q-1)_2}$. Hence, the maximal cyclic $2$-subgroup of $G$ is isomorphic to $C_{(q-1)_2/2}$. It follows from~\eqref{eq:Q} that $Q$ is not a perfect code of $G$ only if conditions in~\eqref{enu:ii} hold. If $q\equiv 5\pmod{8}$, then $|Q|=2=|C_{(q-1)_2/2}|$, and so we derive from~\eqref{eq:Q} that $Q$ is a perfect code of $G$. If $q\equiv 1\pmod{8}$, then $2=|Q|<|C_{(q-1)_2/2}|$. Thus, we obtain from Lemma~\ref{lm:involutionPSL} that there exists $C\cong C_{(q-1)_2/2}$ such that $1<Q<C$, which combined with~\eqref{eq:Q} gives that $Q$ is not a perfect code of $G$. Therefore, we conclude that when $q\equiv 1\pmod{4}$, $Q$ is not a perfect code of $G$ if and only if
    \[
        q>9, \ \ q\equiv 1\pmod{8},\ \text{ and }\, M\cong D_{q+1}.
    \]

    Assume next that $q\equiv -1\pmod{4}$. Then by Lemma~\ref{lm:Sylow2PSL}, each Sylow $2$-subgroup of $G$ is isomorphic to $D_{(q+1)_2}$, and so the maximal cyclic $2$-subgroup of $G$ is isomorphic to $C_{(q+1)_2/2}$. Since $C_{(q-1)_2/2}=1$, it follows from~\eqref{eq:Q} that $Q$ is not a perfect code of $G$ only if conditions in~\eqref{enu:iii} hold. Following a similar argument to the above paragraph, we assert that when $q\equiv -1\pmod{4}$, $Q$ is not a perfect code of $G$ if and only if
    \[
        q>7,\ \ q\equiv -1\pmod{8},\ \text{ and }\,  M\cong D_{q-1}.
    \]
    These together with Lemma~\ref{lm:Z2023} complete the proof.
\end{proof}

The above result leads to the following corollary.

\begin{corollary}\label{cor:G/Todd}
    Under Hypothesis~$\ref{hypo:1}$, if $|G/T|$ is odd, then $M$ is not a perfect code if and only if one the following holds:
    \begin{enumerate}[\rm(a)]
        \item \( q > 7 \), \( q \equiv -1 \pmod{8} \), and \( M\cap T \cong D_{q-1} \);
        \item \( q > 9 \), \( q \equiv 1 \pmod{8} \), and \( M\cap T \cong D_{q+1} \).
    \end{enumerate}
\end{corollary}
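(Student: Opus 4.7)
The plan is to reduce the question for $M \leq G$ to the analogous question for $M \cap T \leq T$, and then invoke Lemma~\ref{lm:PSL}. The tool for the reduction is Lemma~\ref{lm:diamondConverse}, applied to the diamond with top $MT$, sides $M$ and $T$, and bottom $M \cap T$.

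First I would verify the diamond is in the required shape. Since $M \cap T$ is a maximal subgroup of $T$, it is proper in $T$, so $T \not\subseteq M$; hence $M < MT \leq G$, and the maximality of $M$ in $G$ forces $MT = G$. Next, because $M/(M \cap T) \cong MT/T = G/T$ has odd order, we obtain
\[
\frac{|M|}{|M \cap T|} = |G/T| \ \text{is odd,}
\]
so $|M|_2 = |M \cap T|_2$. This is precisely the hypothesis needed to apply Lemma~\ref{lm:diamondConverse} with $H = M$ and $K = T$, which then gives that $M \cap T$ is a perfect code of $T$ if and only if $M$ is a perfect code of $MT = G$.

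Finally, I would apply Lemma~\ref{lm:PSL} to the maximal subgroup $M \cap T$ of $T = \PSL_2(q)$. The two exceptional cases listed there---namely $q > 7$ with $q \equiv -1 \pmod 8$ and $M \cap T \cong D_{q-1}$, and $q > 9$ with $q \equiv 1 \pmod 8$ and $M \cap T \cong D_{q+1}$---translate directly into conditions (a) and (b) of the corollary. No serious obstacle is expected: the argument is essentially immediate once the diamond is identified, with the only substantive observation being that $|G/T|$ odd is exactly what converts the necessary direction of the Diamond Lemma into a biconditional.
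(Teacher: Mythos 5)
Your proposal is correct and follows essentially the same route as the paper: establish $G=MT$, note that $|G/T|$ odd forces $|M|_2=|M\cap T|_2$, and then combine Lemma~\ref{lm:diamondConverse} with Lemma~\ref{lm:PSL}. The only difference is that you spell out why maximality of $M$ gives $MT=G$, which the paper leaves implicit.
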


\begin{proof}
    Since $G=MT$, we have $M/(M\cap T)\cong MT/T=G/T$, which together with $|G/T|$ odd implies that $|M|_2=|M\cap T|_2$. Then the conclusion follows from Lemmas~\ref{lm:diamondConverse} and~\ref{lm:PSL}. 
\end{proof}

Next, we show that each maximal subgroup of \( \PGL_2( q) \) is a perfect code.

\begin{lemma}\label{lm:PGL}
    Let \( q \) be a prime power. Then each maximal subgroup of \( \PGL_2( q) \) is a perfect code.
\end{lemma}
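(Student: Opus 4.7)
The plan is to reduce the problem to a Sylow $2$-subgroup question and then march down the rows of Table~\ref{tab:MPGL}. Write $G=\PGL_2(q)$, let $M$ be a maximal subgroup, and pick $Q\in\Syl_2(M)$. By Lemma~\ref{lm:Z2023}, $M$ is a perfect code of $G$ if and only if $Q$ is, so the task is to verify that every such $Q$ is a perfect code of $G$. Two cases are immediate: if $q$ is a power of $2$, then $G=\PSL_2(q)$ has elementary abelian Sylow $2$-subgroups by Lemma~\ref{lm:Sylow2PSL}, so Lemma~\ref{lm:elem} applies; and the small case $q=3$ (which lies outside Table~\ref{tab:MPGL}) can be handled by direct computation in $\PGL_2(3)\cong S_4$. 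So assume $q\geq 5$ is odd; then Lemma~\ref{lm:Sylow2PSL} gives that the Sylow $2$-subgroups of $G$ are dihedral, which puts us in the setting of Corollary~\ref{cor:dihedral}: $Q$ fails to be a perfect code if and only if $Q$ is cyclic and strictly contained in some cyclic $2$-subgroup of $G$.

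With this criterion in hand, I will read off the Sylow $2$-subgroup column of Table~\ref{tab:MPGL}. For $M$ equal to $D_{2(q-1)}$, $D_{2(q+1)}$, $\PGL_2(q_0)$, $\PSL_2(q)$, or $S_4$, the listed $Q$ is a dihedral group of order at least $4$ (noting $D_4\cong C_2\times C_2$ and that $(q\pm1)_2\geq 4$ whenever those values appear in the row for $\PSL_2(q)$), hence non-cyclic; Corollary~\ref{cor:dihedral} then finishes these rows at once. The only row in which $Q$ is cyclic is $M\cong C_p^f\rtimes C_{q-1}$ with $Q\cong C_{(q-1)_2}$. When $q\equiv 1\pmod 4$, Lemma~\ref{lm:Sylow2PSL} gives that a Sylow $2$-subgroup of $G$ is $D_{2(q-1)_2}$, whose maximum cyclic subgroup has order exactly $(q-1)_2=|Q|$; thus no cyclic $2$-subgroup of $G$ can properly contain $Q$, and Corollary~\ref{cor:dihedral} again delivers the conclusion.

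The main obstacle, and the reason for introducing Lemma~\ref{lm:PGL2q}, is the remaining case $M\cong C_p^f\rtimes C_{q-1}$ with $q\equiv 3\pmod 4$. Here $Q\cong C_{(q-1)_2}=C_2$ while the Sylow $2$-subgroup of $G$ is $D_{2(q+1)_2}$, which does contain cyclic subgroups of order $(q+1)_2\geq 4$; so a purely global inspection cannot rule out $Q$ being sandwiched inside a larger cyclic $2$-subgroup, and one must argue locally. This is exactly where Lemma~\ref{lm:PGL2q} enters: it says that any Sylow $2$-subgroup $P$ of $\NN_G(Q)$ is isomorphic to $D_4\cong C_2\times C_2$, which is elementary abelian. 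Hence Lemma~\ref{lm:elem} gives that $Q$ is a perfect code of $P$, and Lemma~\ref{lm:Z2023}(c) then lifts this local statement to the global one that $Q$ is a perfect code of $G$, completing the proof.
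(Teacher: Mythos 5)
Your proof is correct and follows essentially the same route as the paper's: reduce to a Sylow $2$-subgroup $Q$ of $M$ via Lemma~\ref{lm:Z2023}, invoke Corollary~\ref{cor:dihedral} since the Sylow $2$-subgroups of $\PGL_2(q)$ are dihedral for odd $q$, read off Table~\ref{tab:MPGL} to isolate the only cyclic case $M\cong C_p^f\rtimes C_{q-1}$, settle $q\equiv 1\pmod 4$ by comparing $|Q|$ with the maximal cyclic $2$-subgroup, and settle $q\equiv 3\pmod 4$ locally via Lemma~\ref{lm:PGL2q} together with Lemmas~\ref{lm:elem} and~\ref{lm:Z2023}. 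Your explicit treatment of $q=3$ (outside the range of Table~\ref{tab:MPGL}) is a small extra care the paper glosses over, but it does not change the argument.
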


\begin{proof}
    Let $G=\PGL_2(q)$. If $q$ is even, then it follows from Lemma~\ref{lm:Sylow2PSL} that $G$ has Sylow $2$-subgroups, which combined with Lemma~\ref{lm:elem} shows that each maximal subgroup of $G$ is a perfect code of $G$.

    Now assume that $q$ is odd. Let $M$ be an arbitrary maximal subgroup of $G$, and let $Q\in \Syl_2(M)$. We see from Lemma~\ref{lm:Sylow2PSL} that each Sylow $2$-subgroup of $G$ is isomorphic to $D_{2n}$ for some $n\geq 2$. Hence, we deduce from Corollary~\ref{cor:dihedral} that $Q$ is not a perfect code of $G$ if and only if there exists a cyclic $2$-subgroup $C\leq G$ such that $1<Q<C$. By Lemma~\ref{lm:MaximalPSL}, we see that $Q$ is cyclic only if $M\cong C_p^f\rtimes C_{(q-1)/2}$ and $Q\cong C_{(q-1)_2}$, where $q=p^f$ and $p$ is a prime.

    Assume first that $q\equiv 1\pmod{4}$. Then we see from Lemma~\ref{lm:Sylow2PSL} that each Sylow $2$-subgroup of $G$ is isomorphic to $D_{(q-1)_2}$. Hence, the maximal cyclic $2$-subgroup of $G$ is isomorphic to $C_{(q-1)_2}\cong Q$. Therefore, we conclude that $Q$ is not a perfect code of $G$.

    Assume next that $q\equiv 3\pmod{4}$. Then we obtain from Lemma~\ref{lm:PGL2q} that any Sylow $2$-subgroup of $\NN_G(Q)$ is isomorphic to $D_4$, which is elementary abelian. Hence, we conclude from Lemma~\ref{lm:elem} that $Q$ is a perfect code of $\NN_G(Q)$. Therefore, we obtain from Lemma~\ref{lm:Z2023} that $Q$ is also a perfect code of $G$. This completes the proof.
\end{proof}

With the Diamond Lemma, we can extend the above result as follows.

\begin{corollary}\label{cor:geqPGL}
    Under Hypothesis~$\ref{hypo:1}$, if $G \ge PGL_{2}(q)$, then $M$ is a perfect code of $G$.
\end{corollary}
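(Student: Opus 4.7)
The plan is to invoke the Diamond Lemma (Lemma~\ref{lm:diamond}) with $H = M$ and $K = \PGL_2(q)$, thereby reducing the claim to showing that $M \cap \PGL_2(q)$ is a perfect code of $\PGL_2(q)$; this will in turn follow from Lemma~\ref{lm:PGL} once $M \cap \PGL_2(q)$ is known to be a maximal subgroup of $\PGL_2(q)$.

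To set up the Diamond Lemma, I would first observe that $\PGL_2(q) \trianglelefteq \PGaL_2(q)$, since $\PGaL_2(q) = \PGL_2(q)\rtimes\la\phi\ra$ by construction; consequently $\PGL_2(q) \trianglelefteq G$. Hypothesis~\ref{hypo:1} guarantees that $M \cap T$ is a proper subgroup of $T$, so $M \not\supseteq T$, and hence $M \not\supseteq \PGL_2(q)$. Maximality of $M$ in $G$ then forces $M \cdot \PGL_2(q) = G$ and, by the same token, $MT = G$.

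The main step is to verify that $M' := M \cap \PGL_2(q)$ is maximal in $\PGL_2(q)$. From $[G:M] = [T:M\cap T]$ and $[G:M] = [\PGL_2(q):M']$ one obtains $[M' : M\cap T] = [\PGL_2(q) : T] = 2$. For any $L$ with $M' \leq L \leq \PGL_2(q)$, the intersection $L\cap T$ is sandwiched between $M\cap T$ and $T$, and maximality of $M\cap T$ in $T$ yields the dichotomy $L \cap T = T$ or $L \cap T = M \cap T$. In the first case, $[\PGL_2(q):T]=2$ gives $L \in \{T, \PGL_2(q)\}$, and $L = T$ is ruled out by $M' \not\leq T$, so $L = \PGL_2(q)$; in the second case, $LT = \PGL_2(q)$ (otherwise $L \leq T$ forces $L = M\cap T < M'$), whence $[L : M\cap T] = 2 = [M' : M\cap T]$ and $L = M'$.

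Once $M'$ is known to be maximal in $\PGL_2(q)$, Lemma~\ref{lm:PGL} gives that $M'$ is a perfect code of $\PGL_2(q)$, and the Diamond Lemma delivers the conclusion that $M$ is a perfect code of $M \cdot \PGL_2(q) = G$. The only step that requires any genuine work is the verification of the maximality of $M'$, which reduces to elementary index arithmetic; everything else is a direct consequence of normality of $\PGL_2(q)$ in $G$ together with the two cited lemmas.
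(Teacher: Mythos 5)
Your proposal is correct and follows essentially the same route as the paper: both arguments reduce the statement via the Diamond Lemma (Lemma~\ref{lm:diamond}) with $K=\PGL_2(q)$ to the fact that $M\cap\PGL_2(q)$ is a maximal subgroup of $\PGL_2(q)$, and then invoke Lemma~\ref{lm:PGL}. The only divergence is the verification of that maximality: the paper observes that $\PGL_2(q)$ acts primitively on $G/M$ because it contains the primitive group $T$, whereas you give an elementary index argument using $[\PGL_2(q):T]=2$; both verifications are valid.
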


\begin{proof}
    Under the right multiplication action of $G$ on $G/M$, the set of right cosets of $M$ in $G$, the subgroups $M\cap T$ and $M\cap \PGL_2(q)$ are point stabilizers of $T$ and $\PGL_2(q)$, respectively. Since $M\cap T$ is maximal in $T$, it follows that $T$ is primitive. As a consequence, $\PGL_2(q)$ is also primitive, which implies that $M\cap \PGL_2(q)$ is maximal in $\PGL_2(q)$. Hence, by Lemma~\ref{lm:PGL}, $M\cap \PGL_2(q)$ is a perfect code of $\PGL_2(q)$. Therefore, noting that $G=TM=\PGL_2(q)M$, the conclusion follows from Lemmas~\ref{lm:diamond}.
\end{proof}

The following two lemmas deal with almost simple groups with socle $\PSL_2(q)$ that have not been discussed above.

\begin{lemma}\label{lm:6.6}
    Under Notation~$\ref{nota:PGaL}$, suppose that Hypothesis~$\ref{hypo:1}$ holds, $f$ is even, and $G=T\rtimes\la \phi^k\ra$ for some integer $k$ such that $|\phi^k|$ is even. Then $M$ is a perfect code of $G$.
\end{lemma}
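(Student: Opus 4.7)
The plan is to combine the Diamond Lemma with the $2$-local criterion developed in Section~\ref{sec:3}. Since $p$ is odd and $f$ is even, $q = p^f \equiv 1 \pmod 8$; hence by Lemma~\ref{lm:PSL}, $M \cap T$ is a perfect code of $T$ unless $q > 9$ and $M \cap T \cong D_{q+1}$ (the other exception of Lemma~\ref{lm:PSL} requires $q \equiv -1 \pmod 8$ and cannot occur here). Outside this exceptional scenario, noting $G = MT$ (since $M$ is maximal and $T \not\leq M$), the Diamond Lemma (Lemma~\ref{lm:diamond}) applied with $H = M$ and $K = T$ immediately yields that $M$ is a perfect code of $G$.

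It remains to treat $M \cap T \cong D_{q+1}$ with $q > 9$. By Lemma~\ref{lm:Z2023}, it suffices to show that a Sylow $2$-subgroup $Q$ of $M$ is a perfect code of $G$. Since $(q+1)_2 = 2$, we have $|Q \cap T| = 2$ and $|Q| = 2^{s+1}$, where $2^s = |\phi^k|_2$. Because $Q \cap T \cong C_2$ is normal in the $2$-group $Q$ with trivial automorphism group, $Q \cap T$ is central in $Q$; combined with the cyclic quotient $Q/(Q \cap T) \cong C_{2^s}$, this forces $Q$ to be abelian, so $Q \cong C_{2^{s+1}}$ or $Q \cong C_2 \times C_{2^s}$.

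In the cyclic case $Q \cong C_{2^{s+1}}$, Lemma~\ref{lm:cyclic} reduces the task to ruling out $Q \leq K \leq G$ with $|K| = 2^{s+2}$ and $K$ cyclic or generalized quaternion. A cyclic overgroup is excluded by an order count, since $|K/(K \cap T)| = 2^{s+1}$ would exceed $|\phi^k|_2$. For $K \cong Q_{2^{s+2}}$, classifying the normal subgroups of $Q_{2^{s+2}}$ and using that $T$ has dihedral (not quaternion) Sylow $2$-subgroups leaves only the configuration ruled out by Lemma~\ref{lm:C8Q8} in the base case $s = 1$, with an analogous argument for $s \geq 2$.

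In the non-cyclic case $Q \cong C_2 \times C_{2^s}$, Theorem~\ref{thm:pciff} reduces the task to producing an involution in $aQ$ for each $a \in \NN_G(Q) \setminus Q$ with $a^2 \in Q$. Using $Q = \la t \ra \times \la v \ra$ with $t \in Q \cap T$, the possible bad structures for $Q \la a \ra$ (namely $C_{2^{s+1}} \times C_2$ when $Q \la a \ra$ is abelian, and the modular maximal-cyclic group $M_{2^{s+2}}$ when non-abelian for $s \geq 2$) each require a $2^{s+1}$-element $a \in G$ whose square is a prescribed generator of a cyclic factor of $Q$; the non-existence of such $a$ in the semidirect product $G = T \rtimes \la \phi^k \ra$ follows from explicit matrix calculations in the spirit of Lemma~\ref{lm:C8Q8}, crucially exploiting the absence of the diagonal automorphism $\delta$. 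This final calculation---extending Lemma~\ref{lm:C8Q8} from $s = 1$ to arbitrary $s$ and to the non-cyclic setting---is the main technical obstacle.
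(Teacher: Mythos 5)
You have correctly isolated the easy part (if $M\cap T$ is a perfect code of $T$, then $G=MT$ and the Diamond Lemma finishes), and correctly identified the only problematic configuration as $q\equiv 1\pmod 8$, $q>9$, $M\cap T\cong D_{q+1}$. But your treatment of that configuration has a genuine gap, and you acknowledge it yourself: the entire argument hinges on ``explicit matrix calculations in the spirit of Lemma~\ref{lm:C8Q8}'' extended from $s=1$ to arbitrary $s=\log_2|\phi^k|_2$ and to the non-cyclic Sylow structure, and these calculations are never carried out. Lemma~\ref{lm:C8Q8} as stated applies only to $G=T\rtimes\la\phi^{f/2}\ra$ and to a $C_4$ meeting $T$ in $C_2$, so in your setup nothing is actually proved once $s\geq 2$ or once $Q\cong C_2\times C_{2^s}$. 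Moreover, even granting the strategy, your case analysis in the non-cyclic branch is incomplete: besides $C_{2^{s+1}}\times C_2$ and the modular group $M_{2^{s+2}}$, the overgroup $Q\la a\ra$ could for instance be $C_4\times C_{2^s}$ with $Q$ the index-$2$ subgroup containing all involutions, which also admits no complement and would have to be excluded. You also never determine which of the two candidate structures for $Q$ actually occurs, which itself requires an explicit computation of the type you defer.

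The paper avoids all of this with a reduction you missed. Since $|\phi^k|$ is even, the cyclic group $\la\phi^k\ra$ contains $\phi^{f/2}$, so $H:=T\rtimes\la\phi^{f/2}\ra\leq G$ and $G=MT=MH$; by the Diamond Lemma it therefore suffices to show that $M\cap H$ is a perfect code of $H$, i.e.\ one may assume $s=1$ from the outset. In $H$, an explicit computation with the semilinear map $y\colon v\mapsto v^{p^{f/2}}$ on $\FF_{q^2}$ (using the identification of $M\cap H$ with $\la x^\psi,y^\psi\ra$ via \cite[Table~8.1]{BHD2013}) shows that the Sylow $2$-subgroup $Q$ of $M\cap H$ is $C_4$ with $Q\cap T\cong C_2$ -- ruling out the $C_2\times C_2$ alternative you left open -- and then Lemma~\ref{lm:C8Q8} together with Lemma~\ref{lm:cyclic} and Lemma~\ref{lm:Z2023} finishes. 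To repair your proof you would either need to perform this reduction to $H$ (after which your remaining work collapses to the paper's argument), or genuinely prove the generalized versions of Lemma~\ref{lm:C8Q8} you invoke; as written, the key step is asserted rather than proved.
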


\begin{proof}
    Let $H=T\rtimes \la \phi^{f/2}\ra$. Since $| \phi^k|$ is even, we have $H\leq G$. Since $G=MT=MH$, we see from Lemma~\ref{lm:diamond} that it suffices to show that $M_H:=M\cap H$ is a perfect code of $H$. Note that $M_H$ is maximal in $H$ as $M_T:=M\cap T$ is maximal in $T$.
    
    Since $H=G\cap H=MT\cap H=(M\cap H)T=M_HT$, it follows from Lemma~\ref{lm:diamond} that, if $M_T$ is a perfect code of $T$, then $M_H$ is a perfect code of $H$. Now assume that $M_T$ is not a perfect code of $T$. As $f$ is even, we have $q=p^f\equiv 1\pmod{8}$. Then Lemma~\ref{lm:PSL} implies that $q>9$ and $M_T\cong D_{q+1}$. Let $Q\in \Syl_2(M_H)$.

    In this paragraph, we show that $Q\cong C_4$ and $Q\cap T\cong C_2$. Let $\FF_{q^2}^\times=\la \zeta\ra$. Define mappings
    \[
        x\colon v\mapsto v\zeta^{q-1},\ \  y\colon v\mapsto v^{p^{f/2}},\ \ z\colon v\mapsto v\zeta^{q+1}
    \]
    for $v \in \FF_{q^2}$. Let $N=\la x^\psi, y^\psi\ra$ and $N_T=\la x^\psi, (y^2)^{\psi}\ra$. Since $\ZZ(\GL(2,q))=\la z\ra$, we have $|x^\psi|=(q+1)/2$ and $|y^\psi|=4$. Moreover, it is direct to verify that $(x^\psi)^{(y^2)^\psi}=(x^\psi)^{-1}$. Thus, 
    \[
        N\cap T=N_T\cong D_{q+1}\ \text{ and }\ N/N_T\cong \la \phi^{f/2}\ra\cong H/T\cong  M_H/M_T.
    \]
    By~\cite[Table~8.1]{BHD2013}, there exists $g\in G$ such that $M_H^g=N$. Observe that $\la y^\psi\ra\in \Syl_2(N)$ with $\la y^\psi\ra\cong C_4$ and $\la y^\psi\ra\cap T\cong C_2$. Then it follows that $Q\cong C_4$ and $Q\cap T\cong C_2$.

    By Lemma~\ref{lm:C8Q8}, there does not exist $K$ such that $Q\leq K\leq H$ and  $K\cong C_8$ or $Q_8$. Therefore, we conclude from Lemma~\ref{lm:cyclic} that $Q$ is a perfect code of $H$, which combined with Lemma~\ref{lm:Z2023} implies that $M_H$ is a perfect code of $H$. This completes the proof.
\end{proof}

\begin{lemma}\label{lm:6.7}
    Under Notation~$\ref{nota:PGaL}$, suppose that Hypothesis~$\ref{hypo:1}$ holds, $f$ is even, and $G=T.\la \delta\phi^k\ra$ for some integer $k$ with $|\phi^k|$ even. Then $M$ is not a perfect code of $G$ if and only if $|\phi^k|_2=2$ and \( M\cap T \cong D_{q+1} \).
\end{lemma}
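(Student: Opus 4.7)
The plan is to split into cases based on whether $M\cap T\cong D_{q+1}$. Since $f$ is even and $p$ odd, $q\equiv 1\pmod 8$; by Lemma~\ref{lm:PSL}, $M\cap T$ is a perfect code of $T$ unless $M\cap T\cong D_{q+1}$, and in the former case the Diamond Lemma (Lemma~\ref{lm:diamond}) applied to the factorization $G=MT$ yields that $M$ is a perfect code of $G$. It therefore suffices to assume $M\cap T\cong D_{q+1}$ and show that $M$ is a perfect code of $G$ precisely when $|\phi^k|_2\geq 4$.

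Suppose first that $|\phi^k|_2\geq 4$. Then $|\phi^k|/2$ is even, and a direct computation gives $(\delta\phi^k)^{|\phi^k|/2}=\phi^{f/2}$; hence $\phi^{f/2}\in\la\delta\phi^k\ra$ and $H:=T\rtimes\la\phi^{f/2}\ra$ is a subgroup of $G$. Since $G=MT\subseteq MH$, we have $G=MH$. Because $M$ surjects onto $G/T$, there is a lift $m\in M$ of $\phi^{f/2}T$, forcing $M\cap H\supsetneq M\cap T$; combined with the maximality of $M\cap T$ in $T$ and a short argument through the lattice of subgroups of $H$ containing $M\cap H$ (noting $[X:X\cap T]\leq [H:T]=2$ for any $X\leq H$), this yields $|M\cap H|=2(q+1)$ and $M\cap H$ maximal in $H$. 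Consequently Hypothesis~\ref{hypo:1} holds for $(H,M\cap H)$ with $H=T\rtimes\la\phi^{f/2}\ra$, so Lemma~\ref{lm:6.6} shows $M\cap H$ is a perfect code of $H$. The Diamond Lemma then gives that $M$ is a perfect code of $G$.

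Now suppose $|\phi^k|_2=2$. Then $|\phi^k|/2$ is odd, so $(\delta\phi^k)^{|\phi^k|/2}=\delta\phi^{f/2}$, and we set $L:=T.\la\delta\phi^{f/2}\ra\leq G$. Then $[G:L]=|\phi^k|/2$ is odd, which gives $|M|_2=|M\cap L|_2$, so by Lemma~\ref{lm:diamondConverse}, $M$ is a perfect code of $G$ if and only if $M\cap L$ is a perfect code of $L$. A direct count gives $|M\cap L|=2(q+1)$, so any $Q'\in\Syl_2(M\cap L)$ has order $4$ with $Q'\cap T\cong C_2$. Here the key input is Lemma~\ref{lm:non-split}: the extension $L/T$ is non-split, so no element of $L\setminus T$ is an involution; in particular an element of $Q'\setminus T$ has order exactly $4$, forcing $Q'\cong C_4$. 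Lemma~\ref{lm:Q8} (applied to $L$ with $k=f/2$) then produces $K\leq L$ with $Q'\leq K\cong Q_8$, and so by Lemma~\ref{lm:cyclic} the subgroup $Q'$ is not a perfect code of $L$. Lemma~\ref{lm:Z2023} propagates this to $M\cap L$, and Lemma~\ref{lm:diamondConverse} concludes that $M$ is not a perfect code of $G$.

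The main technical hurdle lies in the case $|\phi^k|_2=2$: one must carefully identify the structure of a Sylow $2$-subgroup of $M\cap L$. Pinning down $Q'\cong C_4$ crucially exploits the non-splitness of $L/T$ via Lemma~\ref{lm:non-split}, after which the $2$-local calculation of Lemma~\ref{lm:Q8} provides the obstructing $Q_8$. The remaining steps are routine applications of the Diamond Lemma together with the $2$-local reductions developed in Sections~\ref{sec:3} and~\ref{sec:4}.
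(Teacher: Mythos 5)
Your proposal is correct and follows essentially the paper's own route: the same case analysis (Lemma~\ref{lm:PSL} plus the Diamond Lemma when $M\cap T\not\cong D_{q+1}$; passage to $T\rtimes\la\phi^{f/2}\ra$ and Lemma~\ref{lm:6.6} when $|\phi^k|_2\geq 4$), and the same obstruction when $|\phi^k|_2=2$ and $M\cap T\cong D_{q+1}$, namely that non-splitness (Lemma~\ref{lm:non-split}) forces a Sylow $2$-subgroup of order $4$ to be $C_4$, after which Lemma~\ref{lm:Q8}, Lemma~\ref{lm:cyclic} and Lemma~\ref{lm:Z2023} rule out the perfect code property. The only (harmless) deviation is that you first descend to the odd-index subgroup $L=T.\la\delta\phi^{f/2}\ra$ via Lemma~\ref{lm:diamondConverse} and apply Lemma~\ref{lm:Q8} inside $L$, whereas the paper applies Lemma~\ref{lm:Q8} directly in $G$ (whose hypothesis $|\la\phi^k\ra|_2=2$ is already met) to a Sylow $2$-subgroup of $M$.
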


\begin{proof}
    To prove the necessity, we suppose that $|\phi^k|_2\neq 2$ or $M\cap T\not \cong D_{q+1}$ and show that $M$ is a perfect code of $G$. If $|\phi^k|_2=1$, then $\la\delta\ra\leq \la \delta\phi^k\ra$, and so $G\geq \PGL_2(q)$, which combined with Corollary~\ref{cor:geqPGL} indicates that $M$ is a perfect code of $G$. Assume that $|\phi^k|_2\geq 4$. Then $\la \phi^{f/2}\ra\leq \la \delta\phi^k\ra$, and so $H:=T.\la \phi^{f/2}\ra\leq G$. Since $M\cap T$ is maximal in $T$, we have $M\cap H$ maximal in $H$. Then Lemma~\ref{lm:6.6} asserts that $M\cap H$ is a perfect code of $H$. Hence, noting that $G=MT=MH$, we conclude by Lemma~\ref{lm:diamond} that $M$ is a perfect code of $G$. Now assume that $|\phi^k|_2=2$, in which case we have $M\cap T\not \cong D_{q+1}$. Since the condition $f$ is even yields that  $q=p^f\equiv 1\pmod{8}$, it follows from Lemma~\ref{lm:PSL} that $M\cap T$ is a perfect code of $T$. Thus, Lemma~\ref{lm:diamond} implies that $M$ is a perfect code of $G$. This completes the proof of the necessity.

    To prove the sufficiency, suppose that $|\phi^k|_2=2$ and \( M\cap T \cong D_{q+1} \). Let $Q\in \Syl_2(M)$. From $M/(M\cap T)\cong G/T \cong \la \delta \phi^k\ra$ we obtain
    \[
        |Q|=|M|_2=|\la \delta\phi^k\ra|_2|M\cap T|_2=2|D_{q+1}|_2=4 \ \text{ and }\ Q\cap T\cong C_2. 
    \]
    If $Q\cong C_2\times C_2$, then $T$ has a complement in $T.\la \delta\phi^{f/2}\ra$, contradicting Lemma~\ref{lm:non-split}. Consequently, $Q\cong C_4$. Then according to Lemma~\ref{lm:Q8}, there exists $K$ such that $Q\leq K\leq G$ and $K\cong Q_8$, which combined with Lemma~\ref{lm:cyclic} implies that $Q$ is not a perfect code of $G$. Therefore, we derive from Lemma~\ref{lm:Z2023} that $M$ is not a perfect code of $G$, completing the proof.
\end{proof}

We are now ready to prove Theorem~\ref{thm:AS}.

\begin{proof}[Proof of Theorem~$\ref{thm:AS}$]
    Assume first that $M\cap T$ is not maximal in $T$. Then it can be read off from~\cite[Tables~8.1 and~8.2]{BHD2013} that either $q\in \{7,9,11\}$ or $G=\PGL_2(q)$. For $q\in \{7,9,11\}$, computation in GAP~\cite{GAP4} shows that $M$ is not a perfect code of $G$ if and only if $G= M_{10}$ and $M\cong \AGL_1(5)$, which is equivalent to Theorem~\ref{thm:AS}\eqref{enu:1.5c} with $q=9$. If $G=\PGL_2(q)$, then Lemma~\ref{lm:PGL} asserts that $M$ is a perfect code of $G$. 
    
    Assume next that $M\cap T$ is a maximal subgroup of $T$. If $q$ is even, then Lemma~\ref{lm:PSL} shows that $M\cap T$ is a perfect code of $T$, and so $M$ is a perfect code of $G$ by Lemma~\ref{lm:diamond}. In the remainder of the proof, we assume that $q$ is odd and adopt Notation~\ref{nota:PGaL}. For odd $|G/T|$, the conclusion of the theorem follows from Corollary~\ref{cor:G/Todd}. If $|G/T|$ is even, then one of the following holds:
    \begin{enumerate}[\rm(i)]
        \item $G\geq \PGL_2(q)$;
        \item $G=T\rtimes\la \phi^k\ra$ for some integer $k$ such that $|\phi^k|$ is even;
        \item \label{enu:1.5iii} $G=T.\la \delta\phi^k\ra$ for some integer $k$ with $|\phi^k|$ even.
    \end{enumerate}
    In this case, we infer from Corollary~\ref{cor:geqPGL} and Lemmas~\ref{lm:6.6} and~\ref{lm:6.7} that $M$ is not a perfect code of $G$ if and only if~\eqref{enu:1.5iii} holds with $|\phi^k|_2=2$ and $M\cap T \cong D_{q+1}$, as in Theorem~\ref{thm:AS}\eqref{enu:1.5c}. 
\end{proof}

\subsection{Type~$\mathrm{AS}$ with socle $A_n$}

This subsection is devoted to the proof of Theorem~\ref{thm:perfectcode}.

For finite sets $A$ and $B$, denote by $\Fun(A,B)$ the set of functions from $A$ to $B$. Note that, if $B$ is a group, then $\Fun(A,B)$ is a group isomorphic to $B^{|A|}$. For a positive integer $t$ and a group $G$, let
    \[
        W=\Sym(G)\wr S_t=\Fun\big([t],\Sym(G)\big)\rtimes S_t,
    \]
where the action of $S_t=\Sym([t])$ on $\Fun\big([t],\Sym(G)\big)$ is defined by $f^{\pi}(i)=f(i^{\pi^{-1}})$ for each $f\in \Fun\big([t],\Sym(G)\big)$, $\pi\in S_t$ and $i\in [t]$. Then $W$ has a subgroup
\[
    D=\{f_g\in \Fun\big([t],R(G)\big) \mid g\in G\},
\]
where $f_g$ is defined by $f_g(i)=R(g)$ for each $i\in [t]$.

\begin{lemma}\label{lm:normalizer}
    Let $G$ be an abelian group, let $t$ be a positive integer, and let $W$ and $D$ be as above. Then for each $f\in \Fun\big([t],\Sym(G)\big)$ and $\pi \in S_t$ such that $f\pi\in \NN_{W}(D)$, there exists $\sigma \in \Aut(G)$ such that
    \[
        f(i)\in \sigma R(G) \ \text{ for each }\ i\in [t].
    \]
\end{lemma}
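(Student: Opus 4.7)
The plan is to unpack the normalizing condition $(f\pi)^{-1}D(f\pi)=D$ coordinate by coordinate in $W$ and then exploit the abelianness of $G$. Using the conventions from the excerpt, the conjugate $(f\pi)^{-1}f_g(f\pi)$ rewrites as $\pi^{-1}(f^{-1}f_gf)\pi$, and evaluating at an index $i\in[t]$ gives $f(i^{\pi^{-1}})^{-1}R(g)f(i^{\pi^{-1}})$. Since this conjugate is required to lie in $D$, for every $g\in G$ there exists $g'\in G$ (depending on $g$ but not on the coordinate) such that
\[
    f(j)^{-1}R(g)f(j)=R(g') \quad\text{for every }j\in[t].
\]
Letting $g$ range over $G$ shows that each $f(j)$ normalizes $R(G)$ inside $\Sym(G)$.

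Next, I would invoke the classical holomorph description $\NN_{\Sym(G)}(R(G))=R(G)\rtimes\Aut(G)$ to write, uniquely, $f(j)=R(g_j)\sigma_j$ with $g_j\in G$ and $\sigma_j\in\Aut(G)$. A direct one-line computation (evaluating at an arbitrary $h\in G$) verifies that $\sigma^{-1}R(g)\sigma=R(\sigma^{-1}(g))$ for every $\sigma\in\Aut(G)$. Because $G$ is abelian, $R(G)$ is abelian, so the factor $R(g_j)$ commutes with $R(g)$ and drops out, yielding
\[
    f(j)^{-1}R(g)f(j)=R(\sigma_j^{-1}(g)).
\]
The independence of the right-hand side on $j$ then forces $\sigma_j^{-1}(g)=\sigma_{j'}^{-1}(g)$ for all $j,j'\in[t]$ and all $g\in G$, so $\sigma_j$ is independent of $j$. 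Denote this common automorphism by $\sigma$.

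Finally, rewriting $f(j)=R(g_j)\sigma=\sigma\cdot\bigl(\sigma^{-1}R(g_j)\sigma\bigr)=\sigma\cdot R(\sigma^{-1}(g_j))$ places $f(j)$ in the left coset $\sigma R(G)$, as required. The main obstacle is essentially bookkeeping: keeping the wreath product conventions consistent (especially which side of $f$ the permutation $\pi$ acts on) and invoking the holomorph structure of $\NN_{\Sym(G)}(R(G))$ cleanly. Once these are handled, abelianness is used at a single point---to kill the $R(g_j)$ factor during the conjugation---after which the conclusion is immediate.
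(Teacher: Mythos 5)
Your proposal is correct and follows essentially the same route as the paper: extract from $f\pi\in\NN_W(D)$ the coordinate-wise, coordinate-independent identity $f(j)^{-1}R(g)f(j)=R(g')$, invoke the holomorph description $\NN_{\Sym(G)}(R(G))=\Aut(G)R(G)$, and use abelianness of $G$ to force the automorphism parts $\sigma_j$ to coincide. The only cosmetic difference is where abelianness enters: the paper uses $\mathsf{C}_{\Sym(G)}(R(G))=R(G)$ applied to $f(i)f(j)^{-1}$, while you commute the $R(g_j)$ factor directly in the conjugation computation--these are equivalent.
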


\begin{proof}
    It follows from $f\pi\in \NN_{W}(D)$ that, for each $f_g\in D$,
    \[
        (f\pi)^{-1}f_g(f\pi)=f_{g'} \ \text{ for some } g'\in G.
    \]
    This and the observation that $\pi$ centralizes $D$ imply that $f^{-1}f_gf=f_{g'}$, and so
    \[
        f(i)^{-1}R(g)f(i)=R(g') \ \text{ for each } i\in [t].
    \]
    As a consequence, we obtain
    \begin{align}
        &f(i)\in \NN_{\Sym(G)}(R(G)) \ \text{ for each } i\in [t],\label{eq:N}\\
        &f(i)f(j)^{-1}\in \mathsf{C}_{\Sym(G)}(R(G)) \ \text{ for each } i,j\in [t].\label{eq:C}
    \end{align}
    Since $G$ is abelian and $R(G)$ is regular on $G$,
    \begin{equation}\label{eq:CN}
        \mathsf{C}_{\Sym(G)}(R(G))=R(G)\ \text{ and }\ \NN_{\Sym(G)}(R(G))=\mathrm{Hol}(G)=\Aut(G)R(G).
    \end{equation}
    It follows from~\eqref{eq:N} and~\eqref{eq:CN} that, for each $i\in[t]$, there exists $\sigma_i\in\Aut(G)$ such that $f(i)\in\sigma_i R(G)$. Thus, noting that $R(G)$ is normalized by $\Aut(G)$, we conclude that
    \[
        f(i)f(j)^{-1}\in \sigma_iR(G) R(G) \sigma_j^{-1}=\sigma_i \sigma_j^{-1} R(G)\ \text{ for each } i,j\in [t].
    \]
    This combined with~\eqref{eq:C} and~\eqref{eq:CN} leads to
    \[
        \sigma_i=\sigma_j \ \text{ for each } i,j\in [t].
    \]
    Therefore, there exists $\sigma \in \Aut(G)$ such that $f(i)\in \sigma R(G)$ for each $i\in [t]$, as required.
\end{proof}

For the rest of the proof, we will use the following notations repeatedly.

\begin{notation}\label{nota:}
    Let $q$ be a prime power such that $q\equiv 3\pmod{4}$, let $n=(q^2-1)_2$, and let $n'=(q^2-1)_{2'}$. Let $V=\FF_{q^2}$, and let $\Omega=\mathbb{F}_{q^2}^{\times}=\langle \zeta \rangle$. Define $x,y\in \Sym(V)$ by letting
    \begin{equation}\label{eq:xy}
        x\colon v\mapsto \zeta^{n'} v\ \text{ and }\ y\colon v\mapsto v^q.
    \end{equation}
    As both $x$ and $y$ fix $0$, we sometimes identify $x$ and $y$ with their restrictions to $\Omega$.
\end{notation}

To prove Theorem~\ref{thm:perfectcode}, we need the following technical lemma.

\begin{lemma}\label{lm:g2inx}
    Under Notation~$\ref{nota:}$, for each $a\in \NN_{\Sym(\Omega)}(\langle x\rangle)$ with $a^2\in \la x,y\ra$, we have $a^2\in \langle x\rangle$.
\end{lemma}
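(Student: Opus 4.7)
The plan is to apply Lemma~\ref{lm:normalizer} with the abelian group $A := \la \zeta^{n'}\ra$, which is the unique subgroup of $\Omega$ of order $n$; note that $n = (q^2-1)_2 \geq 8$ because $q$ is odd. First I would identify $\Omega$ with $[n'] \times A$ via the bijection induced by the coset decomposition of $\Omega$ modulo $A$. Under this identification, $x$ (multiplication by a generator of $A$) becomes the diagonal element of $W = \Sym(A) \wr S_{n'}$ all of whose fiber-components equal $R(\zeta^{n'})$; thus $\la x\ra$ coincides with the subgroup $D$ of Lemma~\ref{lm:normalizer}. Since the orbits of $\la x\ra$ on $\Omega$ are exactly the cosets of $A$ and must be permuted by any normalizing permutation, $\NN_{\Sym(\Omega)}(\la x\ra) \le W$. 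Writing $a = f\pi \in W$, Lemma~\ref{lm:normalizer} yields a single $\sigma \in \Aut(A)$ with $f(i) \in \sigma R(A)$ for every $i$. Using the identity $\sigma R(A) = R(A)\sigma$, which holds because $\Aut(A)$ normalizes $R(A)$ inside $\mathrm{Hol}(A) = R(A) \rtimes \Aut(A)$, a direct computation of $a^2 = f \cdot (\pi f \pi^{-1}) \cdot \pi^2$ places every fiber-component of $a^2$ inside $\sigma^2 R(A)$.

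In parallel, I would unpack $y$ in the same wreath-product language. Since $y$ sends $\zeta^i g \mapsto \zeta^{iq} g^q$ and $A$ is invariant under the $q$-th-power automorphism $\sigma' \in \Aut(A)$, each fiber-component of $y$ is of the form $R(\zeta^{k_i n'})\sigma'$ and hence lies in $\sigma' R(A)$. Consequently, the fiber-components of every element of $\la x\ra$ lie in $R(A)$, while those of every element of $\la x\ra y$ lie in $\sigma' R(A)$; these two cases exhaust $\la x, y\ra = \la x\ra \cup \la x\ra y$.

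Since $a^2 \in \la x, y\ra$, either $a^2 \in \la x\ra$, which is the desired conclusion, or $a^2 \in \la x\ra y$. In the latter case, comparing the two descriptions of the fiber-components forces $\sigma^2 R(A) = \sigma' R(A)$; as distinct cosets of $R(A)$ in $\mathrm{Hol}(A)$ are disjoint, this gives $\sigma^2 = \sigma'$ in $\Aut(A) \cong (\Z/n\Z)^\times$. Under this isomorphism $\sigma'$ corresponds to $q \bmod n$. However, the squares in $(\Z/2^k\Z)^\times$ for $k \ge 3$ are precisely the residues congruent to $1 \pmod 8$, whereas $q \equiv 3 \pmod 4$ forces $q \not\equiv 1 \pmod 8$, a contradiction. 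Hence $a^2 \in \la x\ra$, as claimed. The main obstacle is purely bookkeeping, namely the wreath-product decomposition of $y$ and the coset computation giving $\sigma^2 R(A)$; once these are set up carefully, the argument reduces to a standard fact about the $2$-adic unit group.
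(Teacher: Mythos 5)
Your proposal is correct and follows essentially the same route as the paper: embed $\NN_{\Sym(\Omega)}(\la x\ra)$ into a wreath product over the $\la x\ra$-orbits (the cosets of $A=\la\zeta^{n'}\ra$), apply Lemma~\ref{lm:normalizer} to confine the components of $a$ to a single coset $\sigma R(A)$, and derive the contradiction that $q$ would have to be a square in $(\Z/n\Z)^\times$ despite $q\equiv 3\pmod 4$. The only (harmless) deviation is that you compute the components of $y$ directly from $v\mapsto v^q$ to see they lie in $\sigma' R(A)$ with $\sigma'$ the $q$-power automorphism, whereas the paper applies Lemma~\ref{lm:normalizer} to $y$ as well and then identifies the resulting automorphism via the relation $y^{-1}xy=x^q$.
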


\begin{proof}
We first embed $\NN_{\Sym(\Omega)}(\langle x\rangle)$ into a wreath product. Let $\psi:\langle x\rangle\times [n']\to \Omega$ be a mapping defined by
\begin{equation}\label{eq:psi}
    \psi\colon (x^k,i)\mapsto \zeta^{n'k+i}.
\end{equation}
This is a bijection and induces an isomorphism
\begin{equation}\label{eq:isophi}
    \varphi\colon \Sym(\Omega)\to \Sym(\langle x\rangle\times [n']),\ \ h\mapsto \psi h\psi^{-1}.
\end{equation}
It is readily seen from~\eqref{eq:xy} that
\begin{equation}\label{eq:rel}
    x^n=y^2=e,\ \ y^{-1}xy=x^q,
\end{equation}
and $\langle x\rangle$ acts semi-regularly on $\Omega$ with orbits
\[
    \langle \zeta^{n'}\rangle\zeta^1,\ \langle \zeta^{n'}\rangle\zeta^2,\ldots,\ \langle \zeta^{n'}\rangle\zeta^{n'}.
\]
Since $\NN_{\Sym(\Omega)}(\langle x\rangle)$ permutes these orbits, it follows from~\eqref{eq:psi} and~\eqref{eq:isophi} that $(\NN_{\Sym(\Omega)}(\langle x\rangle))^\varphi$ preserves the partition $\{\langle x\rangle\times \{1\},\ldots,\langle x\rangle\times \{n'\}\}$ of $\langle x\rangle\times [n']$, and so
\begin{equation}\label{eq:wreath}
    (\NN_{\Sym(\Omega)}(\langle x\rangle))^\varphi\leq \Sym(\langle x\rangle)\wr \Sym([n']).
\end{equation}

Next, we characterize $a^\varphi$ and $y^\varphi$ in terms of elements in $\Sym(\langle x\rangle)\wr \Sym([n'])$, as $a^\varphi$ and $y^\varphi$ are both in $(\NN_{\Sym(\Omega)}(\langle x\rangle))^\varphi$. Elements of $\Sym(\langle x\rangle)\wr \Sym([n'])$ can be written as $f\pi$, where $f\in \Fun\big([n'],\Sym(\langle x\rangle)\big)$ and $\pi\in \Sym([n'])$ such that
\begin{equation}\label{eq:wraction}
    (x^k,i)^{f\pi}=((x^k)^{f(i)},i^\pi)\ \text{ for each }\ (x^k,i)\in \langle x\rangle\times [n'].
\end{equation}
As introduced at the beginning of Section~\ref{sec:2}, $R(\la x\ra)$ denotes the right regular permutation representation of  $\la x\ra$. For each $k\in [n]$ define $f_{x^k}\in \Fun\big([n'],\Sym(\langle x\rangle)\big)$ by letting
\begin{equation}\label{eq:fxk}
    f_{x^k}(i)=R(x^k)\ \text{ for each } i\in [n'],
\end{equation}
and define
\[
    D=\{f_{x^k} \mid k\in [n]\}.
\]
Then it is straightforward to verify by~\eqref{eq:psi},~\eqref{eq:isophi},~\eqref{eq:wraction} and~\eqref{eq:fxk} that
\begin{equation}\label{eq:phi}
    (x^k)^\varphi= \psi x^k\psi^{-1}= f_{x^k}\ \text{ for each } k\in [n],
\end{equation}
which implies that $D=\langle x\rangle^\varphi$. Since $\varphi$ is an isomorphism, we obtain
\[
    (\NN_{\Sym(\Omega)}(\langle x\rangle))^\varphi
    =\NN_{(\Sym(\Omega))^\varphi}(\langle x\rangle^\varphi)
    =\NN_{\Sym(\langle x\rangle\times [n'])}(D).
\]
This together with~\eqref{eq:wreath} implies that
\[
    (\NN_{\Sym(\Omega)}(\langle x\rangle))^\varphi=\NN_{\Sym(\langle x\rangle\times [n'])}(D)\cap (\Sym(\langle x\rangle)\wr \Sym([n']))=\NN_{\Sym(\langle x\rangle)\wr \Sym([n'])}(D).
\]
Therefore, by Lemma~\ref{lm:normalizer}, for each $f\pi\in (\NN_{\Sym(\Omega)}(\langle x\rangle))^\varphi$, where $f\in \Fun\big([n'],\Sym(\langle x\rangle)\big)$ and $\pi\in \Sym([n'])$, there exists $\sigma\in \Aut(\langle x\rangle)$ such that $f(i)\in \sigma R(\langle x\rangle)$ for each $i\in [n']$. Then since $a,y\in \NN_{\Sym(\Omega)}(\langle x\rangle)$, we have $a^\varphi,y^\varphi\in (\NN_{\Sym(\Omega)}(\langle x\rangle))^\varphi$, and so
\[
   a^\varphi=f\pi\ \text{ and }\ y^\varphi=g\rho
\]
for some $f,g\in \Fun\big([n'],\Sym(\langle x\rangle)\big)$ and $\pi,\rho\in \Sym([n'])$ such that
\begin{equation}\label{eq:fg}
    f(i)\in \sigma R(\la x\ra)\ \text{ and }\ g(i)\in \tau R(\langle x\rangle)
\end{equation}
for some $\sigma,\tau\in \Aut(\langle x\rangle)$ independent of $i$.

To prove the lemma, suppose for a contradiction that $a^2\notin \langle x\rangle$. Then we see from~\eqref{eq:rel} and $a^2\in \langle x,y\rangle$ that $a^2\in \langle x\rangle y$, whence
\[
    a^2=x^sy \ \text{ for some } s\in [n].
\]
It follows that $(a^\varphi)^2=(x^s)^\varphi y^\varphi=f_{x^s}g\rho$ by~\eqref{eq:phi}. This together with $(a^\varphi)^2=f\pi f\pi=ff^{\pi^{-1}}\pi^2$ implies that
\[
    ff^{\pi^{-1}}=f_{x^s}g.
\]
Since $\sigma,\tau\in \Aut(\langle x\rangle)\leq \NN_{\Sym(\la x\ra)}(R(\la x\ra))$, we derive from~\eqref{eq:fxk} and~\eqref{eq:fg} that, for each $i\in [n']$,
\begin{align*}
    &ff^{\pi^{-1}}(i)=f(i)f^{\pi^{-1}}(i)=f(i)f(i^{\pi})\in \sigma R(\langle x\rangle)\sigma R(\langle x\rangle)= \sigma^2 R(\langle x\rangle),\\
    &f_{x^s}g(i)=f_{x^s}(i)g(i)\in R(\langle x\rangle)\tau R(\langle x\rangle)=\tau R(\langle x\rangle).
\end{align*}
Therefore, we conclude that
\begin{equation}\label{eq:ff}
    \sigma^2=\tau.
\end{equation}
It follows from~\eqref{eq:rel} that $(x^\varphi)^q=(y^\varphi)^{-1}(x^\varphi){y^\varphi}$. Then, noting $x^\varphi=f_x$ from~\eqref{eq:phi}, we obtain
\begin{equation}\label{eq:frho}
    f_x^q=(y^\varphi)^{-1}(x^\varphi){y^\varphi}=(g\rho)^{-1}f_xg\rho=(g^{-1}f_xg)^{\rho}.
\end{equation}
According to~\eqref{eq:fg}, $g(1^{\rho^{-1}})=\tau R(x^t)$ for some $t\in [n]$. Hence,
\begin{align*}
    (g^{-1}f_xg)^{\rho}(1)
    =(g^{-1}f_xg)(1^{\rho^{-1}})
    &=g(1^{\rho^{-1}})^{-1}f_x(1^{\rho^{-1}})g(1^{\rho^{-1}})\\
    &=R(x^{-t})\tau^{-1}R(x)\tau R(x^t)
    =R(x^{-t})R(x^{\tau})R(x^t)
    =R(x^{\tau}),
\end{align*}
which together with $f_x^q(1)=R(x^q)$ and~\eqref{eq:frho} gives that
\begin{equation}\label{eq:tauq}
    x^{\tau}=x^q.
\end{equation}
Write $x^\sigma=x^r$ for some $r\in [n]$. Then, by~\eqref{eq:ff} and~\eqref{eq:tauq}, $x^{r^2}=x^{\sigma^2}=x^{\tau}=x^q$. However, as $|x|=n=(q^2-1)_2$ is divisible by $4$, this yields
\[
    r^2\equiv q \pmod{4},
\]
contradicting $q\equiv 3\pmod{4}$. This completes the proof.
\end{proof}

Now we are ready to prove Theorem~\ref{thm:perfectcode}.

\begin{proof}[Proof of Theorem~$\ref{thm:perfectcode}$]
    We follow Notation~\ref{nota:} and let $Q=\langle x,y\rangle$. It follows from~\eqref{eq:rel} and $q\equiv 3\pmod{4}$ that $|Q|=2n=2(q^2-1)_2=|\AGL_2(q)|_2=|H|_2$, and so $Q\in \Syl_2(H)$. Hence, by Lemma~\ref{lm:Z2023}, it suffices to show that $Q$ is a perfect code of $G$. Noting that
    \[
        \Big(\frac{n}{2}-1-q\Big)_2= \big((q+1)_2-(q+1)\big)_2 = \big((1-(q+1)_{2'})(q+1)_2\big)_2\geq 2(q+1)_2=n,
    \]
    we have $y^{-1}xy=x^q=x^{-1+n/2}$. Since $n=2(q+1)_2\geq 2^3$, we deduce from Corollary~\ref{cor:a^2inQ} that $Q$ is a perfect code of $G$ if and only if $a^2\in \langle x\rangle$ for each $a\in \NN_G(Q)\setminus Q$ with $a^2\in Q$.

    Take an arbitrary $a\in \NN_G(Q)\setminus Q$ with $a^2\in Q$. One can verify by~\eqref{eq:rel} that $\la x\ra$ is the unique cyclic subgroup of order $n$ in $Q$, and hence,
    \[
        a\in \NN_G(Q)\leq \NN_G(\la x\ra)=\NN_{\Sym(V)}(\la x\ra).
    \]
    Since $0$ is the only fixed point of $\langle x\rangle$, we obtain that $a$ fixes $0$. Thereby, we may identify $a$ with its restriction to $\Omega=V\setminus \{0\}$, and so, $a$ can be viewed as an element of $\NN_{\Sym(\Omega)}(\langle x\rangle)$. Thus, we obtain from Lemma~\ref{lm:g2inx} that $a^2\in \langle x\rangle$, which completes the proof.
\end{proof}

\section{Wreath products and primitive groups of type~$\mathrm{PA}$}\label{sec:7}

In this section, we study primitive groups of type~$\mathrm{PA}$. We first provide the following results on perfect codes in direct product, semi-direct product and wreath product of groups.

\begin{lemma}\label{lm:product}
    Let $H$ be a group, and let $Q\leq H$. Then the following statements hold:
    \begin{enumerate}[\rm(a)]
        \item \label{enu:times} for a group $K$ and a subgroup $L$, the direct product $Q\times L$ is a perfect code of $H\times K$ if and only if $Q$ and $L$ are perfect codes of $H$ and $K$ respectively;
        \item \label{enu:rtimes} for a group $K$ such that $Q\rtimes K\leq H\rtimes K$, the subgroup $Q$ is a perfect code of $H$ if and only if $Q\rtimes K$ is a perfect code of $H\rtimes K$;
        \item \label{enu:wr} for a permutation group $K$ such that $Q\wr K\leq H\wr K$, if $Q$ is a perfect code of $H$, then $Q\wr K$ is a perfect code of $H\wr K$.
    \end{enumerate}
\end{lemma}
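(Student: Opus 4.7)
The plan is to handle the three parts using Theorem~\ref{thm:1} together with the Diamond Lemma (Lemma~\ref{lm:diamond}).

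For part~\eqref{enu:times}, the forward direction uses Theorem~\ref{thm:1}(b): given inverse-closed left transversals $S_Q$ of $Q$ in $H$ and $S_L$ of $L$ in $K$, the Cartesian product $S_Q \times S_L$ is an inverse-closed left transversal of $Q \times L$ in $H \times K$, since it is inverse-closed coordinate-wise, has cardinality $|H \times K|/|Q \times L|$, and distinct pairs lie in distinct $(Q \times L)$-cosets. For the reverse direction, apply Theorem~\ref{thm:1}(c) to $Q \times L$ using the element $(a, e) \in H \times K$ for each $a \in H$ with $a^2 \in Q$ and $|Q|/|Q \cap Q^a|$ odd: the intersection $(Q \times L) \cap (Q \times L)^{(a, e)} = (Q \cap Q^a) \times L$ gives the same odd index, and the involution $(b, c) \in aQ \times L$ produced by the perfect-code property yields $b \in aQ$ with $b^2 = e$. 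A symmetric argument with $(e, a)$ handles $L$.

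For part~\eqref{enu:rtimes}, the forward implication follows directly from the Diamond Lemma: inside $H \rtimes K$ we have $H \cap (Q \rtimes K) = Q$ and $H(Q \rtimes K) = H \rtimes K$, so if $Q$ is a perfect code of $H$ then Lemma~\ref{lm:diamond} (with $H$ and $Q \rtimes K$ as the two sides of the diamond) gives that $Q \rtimes K$ is a perfect code of $H \rtimes K$. For the converse, the plan is again to use Theorem~\ref{thm:1}(c) at $(a, e)$, computing $(Q \rtimes K) \cap (Q \rtimes K)^{(a, e)}$ slice-by-slice over $k \in K$: on each slice the calculation reduces to intersecting $Q$ with a translate of $Q^a$, controlled by where the conjugate $k a k^{-1}$ sits relative to the double coset $QaQ$.

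Part~\eqref{enu:wr} follows from \eqref{enu:times} and the forward direction of \eqref{enu:rtimes}. Writing $H \wr K = H^n \rtimes K$ and $Q \wr K = Q^n \rtimes K$, where $n$ is the degree of the permutation group $K$, iterated application of \eqref{enu:times} shows that $Q^n$ is a perfect code of $H^n$, and the forward direction of \eqref{enu:rtimes} then lifts this to the required conclusion.

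The main obstacle will be the converse direction of \eqref{enu:rtimes}. When $(a, e) \in H \leq H \rtimes K$ is conjugated against $Q \rtimes K$, the $K$-action on $H$ interacts with the conjugation in a non-trivial way, so an involution $(b, c) \in aQ \times K$ produced by the perfect-code property need not satisfy $c = e$. A careful refinement---either forcing $c = e$ through the slice analysis, or directly constructing an inverse-closed transversal of $Q$ in $H$ by matching up coset representatives compatibly with the involution $hQ \mapsto h^{-1}Q$ on $H/Q$---will be needed to extract a genuine involution $b \in aQ$ with $b^2 = e$.
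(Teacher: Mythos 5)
Your handling of part~\eqref{enu:times}, of the forward implication in~\eqref{enu:rtimes}, and of part~\eqref{enu:wr} is correct and is essentially the paper's own argument: for~\eqref{enu:times} the paper uses exactly the product transversal and the test element $(a,e)$ with $(Q\times L)\cap(Q\times L)^{(a,e)}=(Q\cap Q^a)\times L$, and for the forward half of~\eqref{enu:rtimes} the paper simply observes that an inverse-closed left transversal of $Q$ in $H$, viewed inside $H\rtimes K$, is also an inverse-closed left transversal of $Q\rtimes K$ in $H\rtimes K$; your route through the Diamond Lemma (Lemma~\ref{lm:diamond}), using $H\cap(Q\rtimes K)=Q$ and $H(Q\rtimes K)=H\rtimes K$, is the same computation in different packaging. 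Part~\eqref{enu:wr} then follows from~\eqref{enu:times} and the forward half of~\eqref{enu:rtimes}, exactly as you say and as the paper does.

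The converse of~\eqref{enu:rtimes}, which you rightly single out as the main obstacle, cannot be completed, because it is false. Take $H=\la x\ra\cong C_4$, $Q=\la x^2\ra$, and $K=\la y\ra\cong C_2$ acting on $H$ by inversion, so that $H\rtimes K\cong D_8$ and $Q\rtimes K=\la x^2,y\ra\cong C_2\times C_2$. Since $(xy)^2=e$, the set $\{e,xy\}$ is an inverse-closed left transversal of $Q\rtimes K$ in $H\rtimes K$, so $Q\rtimes K$ is a perfect code of $H\rtimes K$ by Theorem~\ref{thm:1}; yet $Q$ is not a perfect code of $H$ (Lemma~\ref{lm:cyclic}, or directly: $xQ=\{x,x^3\}$ contains no involution). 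This is exactly the phenomenon you anticipated: the involution supplied by the perfect-code property of $Q\rtimes K$ at $a$ genuinely has a non-trivial $K$-component, and no slice analysis or re-matching of coset representatives can remove it. The failure is in fact already implicit in the paper: Corollary~\ref{cor:wr} exhibits $N\wr S_2=N^2\rtimes S_2$ as a perfect code of $T\wr S_2=T^2\rtimes S_2$ with $N$ not a perfect code of $T$, which together with part~\eqref{enu:times} is incompatible with the converse of~\eqref{enu:rtimes}. Correspondingly, the paper's one-line justification of~\eqref{enu:rtimes} (the transversal ``observation'') is valid only for transversals contained in $H$, so it too proves only the forward implication---which is the only direction invoked in part~\eqref{enu:wr} and in the later applications. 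The right resolution is therefore to state and prove~\eqref{enu:rtimes} as a one-way implication, which your Diamond Lemma argument already does, rather than to keep trying to force $c=e$ in the converse.
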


\begin{proof}
    (a) Suppose that $Q$ and $L$ are perfect codes of $H$ and $K$, respectively.  Then there exist inverse-closed left transversals $T$ of $Q$ in $H$ and $S$ of $L$ in $K$. It follows that $T\times S$ is an inverse-closed left transversal of $Q\times L$ in $H\times K$. Hence, Theorem~\ref{thm:1} implies that $Q\times L$ is a perfect code of $H\times K$.

    Conversely, suppose that $Q\times L$ is a perfect code of $H\times K$. Take an arbitrary $a\in H$ such that $a^2\in Q$ and $|Q|/|Q\cap Q^a|$ is odd. It follows that $a\in H\times K$, $a^2\in Q\times L$, and
    \[
        \frac{|Q\times L|}{|(Q\times L)\cap (Q\times L)^a|}
        =\frac{|Q\times L|}{|(Q\times L)\cap (Q^a\times L)|}=\frac{|Q\times L|}{|(Q\cap Q^a)\times L|}=\frac{|Q|}{|Q\cap Q^a|}
    \]
    is odd. Then we derive from Theorem~\ref{thm:1} that there exists $b=hk\in a(Q\times L)$, where $h\in H$ and $k\in K$, such that $b^2=e$, and thus $h^2=k^2=e$. Since $a\in H$, we have $h\in aQ$. Applying Theorem~\ref{thm:1} again, we conclude that $Q$ is a perfect code of $H$. Similarly, $L$ is a perfect code of $K$. This proves~\eqref{enu:times}.

    (b) This follows from Theorem~\ref{thm:1} and the observation that $T$ is an inverse-closed left transversal of $Q\rtimes K$ in $H\rtimes K$ if and only if $T$ is an inverse-closed left transversal of $Q$ in $H$.

    (c) This is an immediate consequence of~\eqref{enu:times} and~\eqref{enu:rtimes}.
\end{proof}

We remark that the converse of Lemma~\ref{lm:product}\eqref{enu:wr} does not hold, which will be shown in the rest of this section by a family of counterexamples. This indicates that the research of subgroup perfect codes of primitive groups of type~$\mathrm{PA}$ cannot be simply reduced to that of type~$\mathrm{AS}$. In fact, even if $G=H\wr S_n$ for some primitive group $H$ of type~$\mathrm{AS}$ and positive integer $n$, the point stabilizer $M=N\wr S_n$ in $G$ being a perfect code does not necessarily imply that the point stabilizer $N$ in $H$ is a perfect code (note that, however, if $N$ is a perfect code of $H$, then $M$ is a perfect
code of $G$ by Lemma~\ref{lm:product}\eqref{enu:wr}). The following is a general result for $n=2$ and will be used to provide the family of counterexamples.

\begin{theorem}\label{thm:wr}
    Let $G$ be a group, and let $H$ be a subgroup of $G$. If $|H|_2\leq 2$, then $H\wr S_2$ is a perfect code of $G\wr S_2$.
\end{theorem}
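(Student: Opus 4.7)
The plan is to invoke Lemma~\ref{lm:Z2023} to reduce the problem to showing that a Sylow $2$-subgroup $P$ of $L:=H\wr S_2$ is a perfect code of $W:=G\wr S_2$. Writing $W=(G\times G)\rtimes\la\tau\ra$ with $\tau$ swapping the two coordinates, the hypothesis $|H|_2\leq 2$ gives $|P|=2|H|_2^2\in\{2,8\}$, and I will handle these two possibilities separately, using Theorem~\ref{thm:pciff} in the first and Corollary~\ref{cor:Da^2inQ} in the second.

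For the case $|H|_2=1$, I will take $P=\la\tau\ra$ and appeal to Theorem~\ref{thm:pciff}. Since $\tau$ is an involution, $\NN_W(P)=\mathsf{C}_W(\tau)$, and a direct computation identifies this centralizer with $\Delta(G)\la\tau\ra\cong G\times C_2$, where $\Delta(G)=\{(g,g):g\in G\}$ is the diagonal. Every square in $G\times C_2$ lies in the $G$-factor, so $\tau$ is not a square in $\NN_W(P)$; hence for any $a\in\NN_W(P)\setminus P$ with $a^2\in P$ one must have $a^2=e$, and as $a$ commutes with $\tau$ and $a\notin P$, the subgroup $\la a\ra$ is the desired complement of $P$ in $P\la a\ra$.

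For the case $|H|_2=2$, I will fix an involution $u\in H$ and take $P=\la u\ra\wr\la\tau\ra\cong D_8$, writing $P=\la x,y\ra$ with $x=(u,e)\tau$ of order $4$ and $y=\tau$ so that $x^y=x^{-1}$, placing the problem into the setting of Corollary~\ref{cor:Da^2inQ} with $n=2$. Using that $u$ is central in $\mathsf{C}_G(u)$, a routine normalizer computation gives
\[
\NN_W(P)=\{(c,cu^\epsilon)\tau^k:c\in\mathsf{C}_G(u),\ \epsilon,k\in\{0,1\}\}.
\]
For any $a=(c,cu^\epsilon)\tau^k\in\NN_W(P)\setminus P$ (equivalently $c\notin\la u\ra$) with $a^2\in P$, a short calculation yields $a^2=(c^2u^{\epsilon k},c^2u^{\epsilon k})$, which forces $c^2\in\{e,u\}$ and hence $a^2\in\la x^2\ra\subseteq\la x\ra$, verifying condition~(a) of Corollary~\ref{cor:Da^2inQ}; condition~(b) is vacuous because $x$ and $a$ both have order dividing $4$, so $\la x,a\ra$ cannot be cyclic of order $8$; and a further direct conjugation computation, again using that $u\in\mathsf{C}_G(u)$ commutes with $c$, collapses $[a,\tau]$ to $(u^\epsilon,u^\epsilon)\in\la x^2\ra$ uniformly in $k$, verifying condition~(c). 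Corollary~\ref{cor:Da^2inQ} then gives that $P$ is a perfect code of $W$. The main obstacle is the bookkeeping in this second case, and the key simplification is the uniform identity $[a,\tau]=(u^\epsilon,u^\epsilon)$ independent of $k$, which allows (c) to be established without splitting on whether $\la x,a\ra\cong Q_8$.
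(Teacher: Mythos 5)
Your proposal is correct and follows essentially the same route as the paper: reduce via Lemma~\ref{lm:Z2023} to the Sylow $2$-subgroup $\la u\ra\wr S_2\cong D_8$ and verify the three conditions of Corollary~\ref{cor:Da^2inQ}, with the same observation that $\la x,a\ra\not\cong C_8$ because $|x|=4$ and $|a|$ divides $4$. The only differences are organizational: you compute $\NN_{G\wr S_2}(P)$ explicitly before checking the conditions (the paper instead analyzes an arbitrary normalizing element by its coset shape, using that $\la x\ra$ is characteristic in $P$), and in the case $|H|_2=1$ you argue directly via Theorem~\ref{thm:pciff} where the paper simply cites Lemma~\ref{lm:product}\eqref{enu:wr}; both variants are sound.
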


\begin{proof}
    If $|H|_2=1$, then each Sylow $2$-subgroup of $H$ is trivial, which combined with Theorem~\ref{thm:1} and Lemma~\ref{lm:2group} implies that $H$ is perfect code of $G$. In this case, the conclusion follows from Lemma~\ref{lm:product}\eqref{enu:wr}. For the rest of the proof, assume that $|H|_2=2$.

    Take an arbitrary involution $v\in H$, and let $Q= \la v\ra\wr S_2$. Then
    \begin{equation}\label{eq:Qv}
        Q= \{(e,e), (e,v), (v,e), (v,v),(e,e)(1\, 2), (e,v)(1\, 2), (v,e)(1\, 2), (v,v)(1\, 2)\}.
    \end{equation}
    Note that  $Q\in \Syl_2(H\wr S_2)$, as $|H\wr S_2|_2=2|H|_2^2=8$. Let $x=(v,e)(1\,2)$ and $y=(1\,2)$. It is straightforward to verify that
    \[
        Q=\la x,y\ra\cong D_8\ \text{ with }\  x^4=y^2=e \ \text{ and } x^y=x^{-1}.
    \]
    By Lemma~\ref{lm:Z2023} and Corollary~\ref{cor:Da^2inQ}, it suffices to show that, given an arbitrary $a\in\NN_{G\wr S_2}(Q)\setminus Q$ with $a^2\in Q$, all the following hold:
    \begin{enumerate}[\rm(i)]
        \item \label{enu:iwr} $a^2\in \langle x\rangle=\{(e,e),(v,v), (v,e)(1\,2), (e,v)(1\,2)\}$;
        \item \label{enu:iiwr} if $\la x,a\ra\cong C_8$, then $a^y=a^{-1}$;
        \item \label{enu:iiiwr} if $\la x, a\ra\cong Q_8$, then $[a,y]\in \la x^2\ra$.
    \end{enumerate}
    Note that every element of $G\wr S_2$ has the form $(a_1,a_2)\pi$ where $a_1,a_2\in G$ and $\pi\in S_2=\{e,(1\,2)\}$. We complete the proof in the following two cases.

    Assume first that $a=(a_1,a_2)(1\,2)$ for some $a_1,a_2\in G$. Then
    \[
        a^2=(a_1,a_2)(1\,2)(a_1,a_2)(1\,2)=(a_1,a_2)(a_1,a_2)^{(1\,2)}=(a_1,a_2)(a_2,a_1)=(a_1a_2,(a_1a_2)^{a_2^{-1}}).
    \]
    This combined with $a^2\in Q$ and~\eqref{eq:Qv} implies that $a^2\in \{(e,e),(v,v)\}$. Consequently, $|a|$ divides $4$, and $a^2\in \la x\ra$, as~\eqref{enu:iwr} requires. Moreover, since $|x|=4$ and $|a|$ divides $4$, we have $\la x,a\ra\not\cong C_8$, and so~\eqref{enu:iiwr} holds. Suppose that $\la x, a\ra\cong Q_8$. Since $x$ is an element of order $4$ in $Q_8$, we have $x^a=x^{-1}$, and so
    \[
        (e,v)(1\,2)=x^{-1}=x^a=(1\,2)(a_1^{-1},a_2^{-1})(v,e)(1\,2)(a_1,a_2)(1\,2)=(a_2^{-1}a_1,a_1^{-1}va_2)(1\,2).
    \]
    This implies that $a_1=a_2$. Hence,
    \[
        [a,y]=(1\,2)(a_1^{-1},a_2^{-1})(1\,2)(a_1,a_2)(1\,2)(1\,2)=(a_2^{-1}a_1,a_1^{-1}a_2)=(e,e)\in \la x^2\ra,
    \]
    proving~\eqref{enu:iiiwr}.

    Assume next that $a=(a_1,a_2)$ for some $a_1,a_2\in G$. Then
    \begin{align}
        & a^2=(a_1,a_2)^2=(a_1^2,a_2^2)\label{eq:a2xa}\\
        & x^a=(a_1^{-1},a_2^{-1})(v,e)(1\,2)(a_1,a_2)=(a_1^{-1}va_2,a_2^{-1}a_1)(1\,2).\label{eq:xa}
    \end{align}
    Since $\la x\ra$ is a characteristic subgroup of $Q$ and $a$ normalizes $Q$, we obtain $\la x\ra^a=\la x\ra$, and so $x^a=x$ or $x^a=x^{-1}$. Hence,~\eqref{eq:xa} leads to $(a_1^{-1}va_2,a_2^{-1}a_1)=(v,e)$ or $(e,v)$, which implies
    \[
        a_1=a_2\ \text{ or }\ a_1=va_2=a_2v.
    \]
    In either case, $(a_1^{-1}a_2,a_2^{-1}a_1)\in \{(e,e),(v,v)\}$ and $a_1^2=a_2^2$, which together with  $a^2\in Q$ and~\eqref{eq:a2xa} implies $a^2\in \{(e,e),(v,v)\}$. In particular,~\eqref{enu:iwr} holds, and $|a|$ divides $4$. Moreover, since $|x|=4$, we derive that $\la x,a\ra\not\cong C_8$, proving~\eqref{enu:iiwr}. Finally, since
    \[
        [a,y]=(a_1^{-1},a_2^{-1})(1\,2)(a_1,a_2)(1\,2)=(a_1^{-1}a_2,a_2^{-1}a_1)\in \{(e,e),(v,v)\}\subseteq \la x^2\ra,
    \]
    we conclude that~\eqref{enu:iiiwr} holds. This completes the proof.
\end{proof}

Recall from Lemma~\ref{lm:PSL} that a maximal subgroup of $\PSL_2(q)$ may fail to be a perfect code. In contrast, the following corollary of Theorem~\ref{thm:wr} reveals that  the corresponding maximal subgroups of $\PSL_2(q)\wr S_2$ are always perfect codes. This gives an infinite family of counterexamples to the converse of Lemma~\ref{lm:product}\eqref{enu:wr}.

\begin{corollary}\label{cor:wr}
    Let $T=\PSL_2(q)$ with prime power $q$, and let $N$ be a maximal subgroup of $T$. Then \( N\wr S_2 \) is a perfect code of \(T\wr S_2 \).
\end{corollary}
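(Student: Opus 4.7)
The plan is to combine Lemma~\ref{lm:PSL} with Theorem~\ref{thm:wr}, splitting the argument into two short cases that together exhaust all possibilities.

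First, I appeal to Lemma~\ref{lm:PSL}, which classifies those maximal subgroups of $T=\PSL_2(q)$ that fail to be perfect codes of $T$: the only exceptions are (a) $q>7$ with $q\equiv -1\pmod{8}$ and $N\cong D_{q-1}$, and (b) $q>9$ with $q\equiv 1\pmod{8}$ and $N\cong D_{q+1}$. Whenever we are \emph{not} in one of these two families, $N$ is itself a perfect code of $T$, and then Lemma~\ref{lm:product}\eqref{enu:wr} (applied with $H=T$, $Q=N$, $K=S_2$) immediately yields that $N\wr S_2$ is a perfect code of $T\wr S_2$. In particular, this absorbs all $q$ that are powers of $2$ as well as the small odd $q$ excluded by Lemma~\ref{lm:PSL}.

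It remains to handle the two exceptional families, and the key observation is that in both exceptions the congruence on $q$ pins down $|N|_2=2$. In case (a), $q\equiv -1\pmod{8}$ gives $q\equiv 3\pmod{4}$, and the row for $D_{q-1}$ in Table~\ref{tab:M} records the Sylow $2$-subgroup as $C_2$; in case (b), $q\equiv 1\pmod{8}$ gives $q\equiv 1\pmod{4}$, and the row for $D_{q+1}$ in Table~\ref{tab:M} again gives $C_2$. Therefore $|N|_2\leq 2$, and I invoke Theorem~\ref{thm:wr} with $G=T$ and $H=N$ to conclude that $N\wr S_2$ is a perfect code of $T\wr S_2$.

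The proof is essentially a matching of hypotheses: the only maximal subgroups not handled by Lemma~\ref{lm:product}\eqref{enu:wr} are precisely those whose Sylow $2$-structure triggers Theorem~\ref{thm:wr}, so there is no genuine obstacle beyond a careful bookkeeping against Table~\ref{tab:M}. The only point that requires a moment's thought is checking that both exceptional congruence conditions $q\equiv\pm 1\pmod{8}$ land in the rows of Table~\ref{tab:M} that yield a cyclic Sylow $2$-subgroup of order $2$ (rather than a larger dihedral one), which is what makes the dichotomy exhaustive and keeps the argument as short as described.
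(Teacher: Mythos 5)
Your proposal is correct and follows essentially the same route as the paper's proof: reduce via Lemma~\ref{lm:product}\eqref{enu:wr} to the case where $N$ fails to be a perfect code of $T$, use Lemma~\ref{lm:PSL} to identify the two exceptional families, observe that in both $|N|_2=2$, and conclude with Theorem~\ref{thm:wr}. Your extra verification against Table~\ref{tab:M} of why $|N|_2=2$ in each exceptional case is a correct elaboration of the step the paper states without detail.
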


\begin{proof}
    By Lemma~\ref{lm:product}\eqref{enu:wr}, we are left to consider the case when $N$ is not a perfect code of $T$. By Lemma~\ref{lm:PSL}, we assume that one of the following holds:
    \begin{itemize}
        \item \( q > 9 \), \( q \equiv 1 \pmod{8} \), and \( N \cong D_{q+1} \);
        \item \( q > 7 \), \( q \equiv -1 \pmod{8} \), and \( N\cong D_{q-1} \).
    \end{itemize}
    In either case, $|N|_2=2$, and therefore, we conclude from Theorem~\ref{thm:wr} that $N\wr S_2$ is a perfect code of $T\wr S_2$, completing the proof.
\end{proof}

\textbf{Acknowledgements.} The authors would like to thank Ran Ju and Ju Zhang for their helpful discussions, which contributed to the improvement of this paper. Shouhong Qiao and Ning Su are supported by the National Natural Science Foundation of China (12471015) and the Natural Science Foundation of Guangdong Province (2025A1515012321). Shouhong Qiao also gratefully acknowledges the support of the China Scholarship Council and the hospitality of the School of Mathematics and Statistics at the University of Melbourne. Binzhou Xia and Sanming Zhou acknowledge the support of ARC Discovery Project DP250104965. Zhishuo Zhang is supported by the Melbourne Research Scholarship provided
by The University of Melbourne.

\end{document}